\documentclass[final,hidelinks,onefignum,onetabnum]{siamart220329}

\usepackage{lipsum}
\usepackage{amsfonts}
\usepackage{graphicx}
\usepackage{epstopdf}
\ifpdf
\DeclareGraphicsExtensions{.eps,.pdf,.png,.jpg}
\else
\DeclareGraphicsExtensions{.eps}
\fi
\usepackage{booktabs}
\usepackage{hyperref}
\usepackage{stmaryrd}
\usepackage{bm}
\usepackage{caption}
\usepackage{subcaption}
\usepackage{multirow}
\usepackage{enumitem}
\usepackage{tikz}
\usepackage[ruled]{algorithm2e}
\newtheorem{example}{\textbf{Example}}[section]
\usepackage{CJKutf8}
\usepackage{amsmath, amssymb, cases, extarrows, float, xcolor, titlesec, iitem, colortbl, setspace, cite}  
\usepackage{geometry}

\newsiamthm{assumption}{Assumption}

\newtheorem{remark}[theorem]{Remark}
\newtheorem{notion}{Notion}[section]

\headers{}{}

\title{Towards $C^0$ finite element methods for fourth-order elliptic equation. Part I: general boundary conditions}

\author{Xihao Zhang\thanks{School of Mathematics and Computational Science, Xiangtan University, Xiangtan 411105, P.R.China (\email{202431510169@smail.xtu.edu.cn})}
\and Hengguang Li\thanks{Department of Mathematics and Institute for AI and Data Science, Wayne State University, Detroit, Michigan, USA (\email{li@wayne.edu}).}
\and Nianyu Yi\thanks{Hunan Key Laboratory for Computation and Simulation in Science and Engineering; School of Mathematics and Computational Science, Xiangtan University, Xiangtan 411105, P.R.China (\email{yinianyu@xtu.edu.cn}).}
\and Peimeng Yin\thanks{Corresponding author. Department of Mathematical Sciences, The University of Texas at El Paso, El Paso, TX 79968, USA (\email{pyin@utep.edu}).}} 

\usepackage{amsopn}

\newcommand{\thm}[1]{\begin{theorem}#1\end{theorem}}
\newcommand{\lem}[1]{\begin{lemma}#1\end{lemma}}
\newcommand{\dfn}[1]{\begin{definition}#1\end{definition}}

\newcommand{\exa}[1]{\begin{example}#1\end{example}}
\newcommand{\prf}[1]{\begin{proof}#1\end{proof}}

\newcommand{\seqn}[1]{\begin{subequations}#1\end{subequations}}
\newcommand{\al}[1]{\begin{align*}#1\end{align*}}
\newcommand{\ali}[1]{\begin{align}#1\end{align}}

\newcommand{\ca}[1]{\begin{cases}#1\end{cases}}
\newcommand{\nca}[1]{\begin{numcases}#1\end{numcases}}
\newcommand{\pict}[1]{\begin{tikzpicture}#1\end{tikzpicture}}

\newcommand{\fig}[1]{\begin{figure*}#1\end{figure*}}
\newcommand{\bbr}[1]{\left\{#1\right\}}

\newcommand{\sbr}[1]{\left(#1\right)}

\numberwithin{equation}{section}
\hypersetup{
	hidelinks, 
	colorlinks=true,
	linkcolor=blue,
	citecolor=black
}
\ifpdf
\hypersetup{
  pdftitle={},
  pdfauthor={}
}
\fi

\usepackage{comment}

\begin{document}
\begin{CJK}{UTF8}{gbsn}

\maketitle

\begin{abstract}
This paper is part of a series developing $C^0$ finite element methods for fourth-order elliptic equations on polygonal domains. Here, we investigate how boundary conditions influence the design of effective $C^0$ schemes, specifically focusing on equations without lower-order terms, namely the biharmonic equation. We propose a modified mixed formulation that decomposes the problem into a system of Poisson equations, where the number of equations depends on both the largest interior angle and the boundary conditions on its two adjacent sides. In contrast to the naive mixed formulation, which involves only two Poisson problems, the proposed approach guarantees convergence to the true solution for arbitrary polygonal domains and general boundary conditions, including Navier, Neumann, and mixed boundary conditions. $C^0$ finite element algorithms are developed, rigorous error estimates are established, and numerical experiments are presented to demonstrate the well-posedness and effectiveness of the proposed method.
\end{abstract}

\begin{keywords}
biharmonic equation, general boundary conditions, mixed formulation, $C^0$ finite element method, error estimate.
\end{keywords}

\begin{MSCcodes}
65N12, 65N30, 35J40
\end{MSCcodes}


\section{Introduction}

Let $\Omega \subset \mathbb{R}^2$ be a polygonal domain whose boundary is decomposed as
\(
\partial \Omega = \overline{\Gamma}_D \cup \overline{\Gamma}_N,
\)
where $\Gamma_D$ and $\Gamma_N$ are two disjoint open subsets of $\partial \Omega$.
Given $f \in L^2(\Omega)$, we are interested in developing $C^0$ finite element algorithms for the following fourth-order elliptic problem:
\begin{align}\label{eqn-initial-}
\begin{cases}
\Delta^2 u - b \Delta u + c u = f, & \text{in } \Omega, \\
u = \Delta u = 0, & \text{on } \Gamma_D, \\
\partial_{\mathbf n} u = \partial_{\mathbf n} \Delta u = 0, & \text{on } \Gamma_N.
\end{cases}
\end{align}
Here, $b$ and $c$ are nonnegative constants, and $\mathbf n$ denotes the outward unit normal vector on $\partial \Omega$. The boundary segments $\Gamma_D$ and $\Gamma_N$ correspond to Dirichlet and Neumann boundary conditions, respectively.

To develop $C^0$ finite element methods for the biharmonic equation with Navier boundary conditions, a typical approach is to reformulate the problem as a fully decoupled system of lower-order elliptic equations, which allows for the direct use of $C^0$ finite element spaces \cite{li2023ac}. However, due to the presence of lower-order terms in \eqref{eqn-initial-}, the equation cannot be rewritten as a fully decoupled system. Consequently, existing $C^0$ finite element methods with correction techniques for the biharmonic equation under Navier boundary conditions cannot be directly extended to problem \eqref{eqn-initial-}.

In a series of works, we aim to develop $C^0$ finite element methods for problem \eqref{eqn-initial-}. In the present paper, we first focus on the influence of boundary conditions on $C^0$ finite element methods for biharmonic problem, namely the case $b = c = 0$ in \eqref{eqn-initial}:
\begin{align}\label{eqn-initial}
\begin{cases}
\Delta^2 u = f, & \text{in } \Omega, \\
u = \Delta u = 0, & \text{on } \Gamma_D, \\
\partial_{\mathbf n} u = \partial_{\mathbf n} \Delta u = 0, & \text{on } \Gamma_N.
\end{cases}
\end{align}

The biharmonic problem has been extensively studied in fields such as fluid mechanics and structural mechanics \cite{rachh2020solution, mardanov2016solution, timoshenko1959theory, ugural2009stresses, sokolnikoff1946mathematical}, where different types of boundary conditions correspond to specific application scenarios.
For the boundary conditions defined in problem~\eqref{eqn-initial}, when $\Gamma_D = \partial\Omega$, the problem is said to satisfy Navier boundary conditions \cite{MR1422248, MR2499118}. This case arises in structural mechanics, for instance in models describing the static loading of a thin plate with hinged boundaries.
In contrast, when $\Gamma_N = \partial\Omega$, the problem is referred to as the Neumann boundary condition case \cite{matevossian2019biharmonic}. Problems of this type are associated with physical models such as the transverse vibration of a thin plate with free boundaries and the natural deformation of membrane materials \cite{courant2008methods}.

In this paper, we treat the cases $\Gamma_D = \partial\Omega$ and $\Gamma_N = \partial\Omega$ as trival cases of mixed boundary conditions.
In general, $\Gamma_D$ and $\Gamma_N$ may be nontrivial subsets of $\partial\Omega$, thereby encompassing a broader class of physical models.
For such fourth-order problems, finite element methods based on direct variational formulations can be employed \cite{brenner2008mathematical}. Typical choices include $C^1$ finite elements, such as the Argyris element, which consists of fifth-degree polynomials \cite{taibaouic1}, and the Morley element, a quadratic nonconforming element \cite{li2018adaptive}.
However, the construction of these finite element spaces is often complicated, making the corresponding methods challenging to implement in practice. 
Alternative approaches, including discontinuous Galerkin methods \cite{georgoulis2009discontinuous} and the $C^0$ interior penalty Galerkin method \cite{brenner2015c, cao2025posteriori}, are also viable. 
Nevertheless, the choice of penalty parameters is often subtle.
For the biharmonic equation with Navier or Dirichlet boundary conditions, the problem can be decomposed into a system of Poisson equations and a Stokes equation \cite{gallistl2017stable, li2025analysis}. However, such a decomposition is no longer applicable when the fourth-order elliptic equation includes lower-order terms.

Motivated by the structure of the boundary conditions in problem~\eqref{eqn-initial}, an intuitive strategy is to decouple the biharmonic problem into a system of Poisson equations, which can then be efficiently solved using standard finite element methods.
Mixed finite element methods based on $C^0$ finite elements offer an attractive framework for the numerical approximation of the biharmonic problem~\eqref{eqn-initial} \cite{ciarlet1974mixed, ciarlet1975dual, falk1978approximation, ciarlet2002finite, cheng2000some, monk1987mixed, babuvska1980analysis, li2023ac, li2025analysis}. A large portion of the existing literature focuses on clamped boundary conditions (also referred to as Dirichlet boundary conditions), namely $(u = \partial_{\mathbf n} u = 0 \text{ on } \partial\Omega)$ \cite{hu2021family, destuynder2013mathematical, rafetseder2018decomposition, li2025analysis}.
For biharmonic problems with Navier boundary conditions, it has been observed \cite{nazarov2007boundary, zhang2008invalidity, gerasimov2012corners, de2019comparing, li2023ac} that when the domain $\Omega$ is nonconvex, the mixed finite element solution of problem~\eqref{eqn-initial} may fail to coincide with the true solution for certain source terms $f \in L^2(\Omega)$. This discrepancy is known as the Sapongyan paradox \cite{MR2270884, zhang2008invalidity}.
This arises because the function space of the solution to a mixed variational problem may not coincide with that of the corresponding direct variational problem, owing to the presence of a low-regularity term that does not belong to $H^2(\Omega)$ \cite{nazarov1994elliptic, zhang2008invalidity}.

Recently, a modified mixed formulation was proposed in \cite{li2023ac} for the biharmonic equation with Navier boundary conditions. This formulation decomposes the problem into a system of three Poisson equations, in which an additional intermediate Poisson problem is introduced to ensure that the solution is confined to the correct function space.
A similar phenomenon was further observed for the triharmonic equation with simply supported boundary conditions \cite{li2025ac}, which can be naively decomposed into three Poisson equations with Dirichlet boundary conditions. However, the solutions obtained from these Poisson equations may still lie in a larger function space, resulting in weak solutions that exhibit behavior similar to the Sapongyan paradox. To address this issue, one to three additional Poisson equations, depending on the largest interior angle, may be introduced to further confine the solution to the correct function space.

Existing \(C^0\) finite element algorithms for the biharmonic equation are largely restricted to Navier boundary conditions. To develop an effective \(C^0\) finite element algorithm for \eqref{eqn-initial-}, it is crucial to investigate how general boundary conditions influence the convergence of the \(C^0\) finite element approximation. From this perspective, the Navier case can be viewed as a special instance.
By analyzing the orthogonal complement of the range of the Laplace operator mapping $H^2$, equipped with appropriate boundary conditions, into $L^2(\Omega)$, we first construct a modified decoupled scheme for problem~\eqref{eqn-initial}. This scheme transforms the biharmonic problem into a system of three or four Poisson problems, in which the number of equations depends on the largest interior angle of the domain (possibly including angles greater than $\frac{\pi}{2}$), as well as on the specific choice of boundary conditions.
In addition to the two Poisson problems directly arising from the biharmonic problem, a varying number of additional Poisson problems is required to ensure that the solution is confined to the correct Sobolev space. In particular, if only one interior angle exceeds \(\frac{\pi}{2}\), then at most one additional Poisson problem is needed for Navier or Neumann boundary conditions, whereas up to two additional Poisson problems are required for mixed boundary conditions.
A theoretical analysis is presented to demonstrate the equivalence between the proposed mixed formulation, that is, the system of Poisson problems, and the original biharmonic problem in appropriate Sobolev spaces.
Each of these Poisson problems can be solved using standard finite element methods, thereby forming a hybrid finite element algorithm for the biharmonic problem \eqref{eqn-initial}. In this way, the numerical solution of problem \eqref{eqn-initial} via the resulting system of Poisson problems becomes both simple and efficient. 

To solve the proposed mixed formulation, we develop a numerical algorithm using piecewise linear \(C^0\) finite elements. In addition, we provide an error analysis for the finite element approximations of both the auxiliary function \(w\) and the solution \(u\).
For the auxiliary function \(w\), the error in the \(H^1\) norm is standard and converges at a rate of \(h^{\alpha}\) on a quasi-uniform mesh, where \(\alpha\) depends on the largest interior angle and the boundary conditions on both sides of that angle.
The \(L^2\) error estimate is derived using a standard duality argument.
For the solution $u$, the approximation has a convergence rate of $h^{\min\{1, 2\alpha\}}$ in general. For Navier and Neumann boundary conditions, the optimal convergence rate is $h^1$, whereas for mixed boundary conditions, the convergence rate is $h^1$ if the interior angle is no more than $\pi$ and $h^{2\alpha}$ if the angle is greater than $\pi$.
Numerical examples are presented to compare the proposed method with existing numerical approaches, such as the $C^0$ interior penalty method. The solutions obtained from the proposed $C^0$ finite element algorithm agree very well with those computed using the $C^0$ interior penalty method \cite{brenner2015c, cao2025posteriori}. The observed numerical convergence rates are in good agreement with the theoretical predictions.

The remainder of the paper is organized as follows.
In \Cref{section-2}, drawing on the theory for second-order elliptic equations developed in \cite{grisvard1992singularities}, we identify the deficiencies of the solution space associated with the naive decoupled formulation. We then characterize the dimension of the orthogonal complement of the image of the Laplace operator and construct a corresponding basis. Based on these results, we propose a modified scheme and establish its well-posedness.
In \Cref{section-3}, we present a mixed finite element algorithm for the modified scheme and derive error estimates for both the primary solution and the auxiliary function on quasi-uniform meshes.
The extension to Neumann boundary conditions is discussed in \Cref{section-4}.
Finally, in \Cref{section-5}, we report numerical experiments on a range of representative polygonal domains and boundary conditions to verify the effectiveness of the proposed scheme.

\section{Biharmonic problem with mixed boundary conditions}\label{section-2}


Let $\gamma=(\gamma_1, \ldots, \gamma_d)\in\mathbb Z^d_{\geq0}$ be a multi-index, and define  
\(\partial^\gamma := \partial_{x_1}^{\gamma_1}\cdots\partial_{x_d}^{\gamma_d}\), 
\(|\gamma| := \sum_{i=1}^d \gamma_i\).
For $m\geq 0$, the Sobolev space $H^m(\Omega)$ consists of functions whose weak derivatives up to order $m$ are square-integrable. In particular,  
\(L^2(\Omega) := H^0(\Omega)\).
For $s>0$, let $s=m+t$ with $m\in \mathbb Z_{\geq 0}$ and $0<t<1$. For $D\subseteq \mathbb{R}^d$, the fractional Sobolev space $H^s(D)$ consists of distributions $v$ in $D$ such that  
\[
\|v\|^2_{H^s(D)} := \|v\|^2_{H^m(D)} 
+ \sum_{|\gamma|= m}\int_{D}\int_{D} \frac{|\partial^\gamma v(x) - \partial^\gamma v(y)|^2 }{|x-y|^{d+2t}} \,dx\,dy < \infty .
\]
The norm $\|\cdot\|_{H^s(\Omega)}$ is abbreviated as $\|\cdot\|_s$ for $s>0$, and $\|\cdot\|_{L^2(\Omega)}$ is abbreviated as $\|\cdot\|$.
We define $H_0^s(D)$ as the closure of $C_0^\infty(D)$ in $H^s(D)$, 
$H^{-s}(D)$ as the dual space of $H_0^s(D)$, 
$\widetilde{H}^s(D)$ as the space of all functions $v$ on $D$ whose extension $\tilde{v}$ by zero outside $D$ belongs to $H^s(\mathbb{R}^d)$, and $\widetilde{H}^{-s}(D)$ as the dule space of $\widetilde{H}^s(D)$.
In addition, we use the notation $a \lesssim b$ for quantities $a,b$ to mean $a \leq C b$,
where $C>0$ is a constant depending only on $\Omega$ and the boundary conditions.

In this section, we consider problem \eqref{eqn-initial} under the assumption that $\Gamma_D \neq \emptyset$. The special case where $\Gamma_N = \partial\Omega$ will be addressed in \Cref{sec-neumann}.
We define the Sobolev space
\begin{align}
V^2 := \{v\in H^2(\Omega):v|_{\Gamma_D}=0, \ \partial_{\mathbf n}v|_{\Gamma_N}=0\}. \label{dfn-V2}
\end{align}
The variational formulation of problem \eqref{eqn-initial} is then: find $u\in V^2$ such that
\begin{align} \label{eqn-var}
a(u,v) = (f,v), \quad \forall v \in V^2,
\end{align}
where the bilinear form $a(u,v)$ is defined by
\begin{equation}
    a(u,v):=\int_\Omega\Delta u\Delta v\,\mathrm dx.
\end{equation}

For any \( u, v \in V^2 \), the bilinear form \( a(u,v) \) is continuous on \( V^2 \):
\begin{align*}
	|a(u,v)|
	&= \left|\int_\Omega \Delta u \, \Delta v \, \mathrm{d}x\right|
	\leq \|\Delta u\| \, \|\Delta v\|
	\leq \|u\|_2 \, \|v\|_2.
\end{align*}
For any \( u \in V^2 \), the coercivity of \( a(u,u) \) follows from the inequality \cite[Theorem 2.2.3]{grisvard1992singularities}:
\begin{align*}
	\|u\|_2^2 \lesssim \|\Delta u\|^2 = a(u,u).
\end{align*}
Therefore, the Lax--Milgram Theorem guarantees that the variational problem \eqref{eqn-var} is well-posed.

\subsection{Naive mixed formulation}\label{sec2.1}
The problem \eqref{eqn-initial} can be decoupled into two Poisson problems with mixed boundary conditions
\ali{\label{eqn-dec}
    \ca{
	-\Delta w = f \ &\text{in }\Omega,\\
	w = 0 \ &\text{on }\Gamma_D,\\
	\partial_{\mathbf n} w = 0 \ &\text{on }\Gamma_N,
    }\qquad \text{and} \qquad
    \ca{
        -\Delta\bar u = w \ &\text{in }\Omega,\\
	\bar u = 0 \ &\text{on }\Gamma_D,\\
	\partial_{\mathbf n} \bar u=0 \ &\text{on }\Gamma_N,
    }
}
which is referred to as the naive mixed formulation. Define the Sobolev space
\al{
	V^1&:=\{v\in H^1(\Omega):v|_{\Gamma_D}=0\},
}
Then the corresponding variational form for \eqref{eqn-dec} is to find \( \bar{u}, w \in V^1 \) such that
\ali{\label{eqn-dec-var}
    \ca{
    A(w,\phi)=(f,\phi), \quad\forall\phi\in V^1,\\
    A(\bar u,\psi)=(w,\psi), \quad\forall\psi\in V^1,
    }
}
where the bilinear form
\begin{equation}
    A(\phi,\psi):=\int_\Omega\nabla\phi\cdot\nabla\psi\, \mathrm dx.
\end{equation}

This decoupled system consists of two standard Poisson problems, each of which is well posed and admits a unique solution under mixed boundary conditions, provided that $\Gamma_D \neq \emptyset$. Given $f \in H^{-1}(\Omega)$, the variational problem \eqref{eqn-dec-var} admits a weak solution $(\bar u, w) \in V^1 \subset H^1(\Omega)$. Since the solution $u$ to the original problem \eqref{eqn-initial} belongs to $V^2$, it is natural to question whether the solution $\bar u$ from the mixed formulation \eqref{eqn-dec} also lies in $V^2$. If not, then $\bar u$ cannot be regarded as the true solution to the original biharmonic problem \eqref{eqn-initial}.

Navier boundary conditions, i.e., $\Gamma_D = \partial \Omega$, the equivalence between the mixed formulation \eqref{eqn-dec} and the original biharmonic problem has been studied in \cite{li2023ac}. It was shown that, in domains with reentrant corners, the solution of the mixed formulation \eqref{eqn-dec} does not belong to the same Sobolev space as the solution of the biharmonic problem \eqref{eqn-initial}, a phenomenon known as the Sapongyan paradox \cite{nazarov1994elliptic, zhang2008invalidity}. 
For biharmonic problems with mixed boundary conditions as in \eqref{eqn-initial}, a similar discrepancy also arises for the solution of the mixed formulation \eqref{eqn-dec}. In the following, we analyze the solution structure in detail and develop effective methods to address this issue.

\subsection{Modified mixed formulation}
%
%
For the biharmonic problem \eqref{eqn-initial}, we allow interior angles equal to $\pi$ at points where different types of boundary conditions meet, and we regard each such boundary transition point as a vertex of the polygonal domain $\Omega$ for analytical purposes. Then each edge of $\Omega$ lies entirely in either $\bar\Gamma_D$ or $\bar\Gamma_N$.

	

\dfn{[Vertex, edge, and indicator sets]\label{dfn-idx}
	Let $\{Q_j\}_{j=1}^n$ denote the vertices of $\partial\Omega$, ordered counterclockwise, with interior angles $\omega_j$. For each $j$, let the open set $\Gamma_j$ be the edge connecting $Q_{j-1}$ and $Q_j$ $(Q_0=Q_n)$, so that
	\(
	\partial\Omega = \bigcup_{j=1}^n \bar\Gamma_j.
	\)
	For a vertex $Q_j$, the index $j$ is classified according to the boundary conditions on the adjacent edges $\Gamma_j$ and $\Gamma_{j+1}$ as the following indicator sets:
	\begin{itemize}
		\item $\mathcal D^2$: $\Gamma_j,\Gamma_{j+1}\subset \Gamma_D$;
		\setlength{\parskip}{-0.1em}
		\item $\mathcal N^2$: $\Gamma_j,\Gamma_{j+1}\subset \Gamma_N$;
		\setlength{\parskip}{-0.1em}
		\item $\mathcal M'$: $\Gamma_j\subset \Gamma_N,\;\Gamma_{j+1}\subset \Gamma_D$;
		\setlength{\parskip}{-0.1em}
		\item $\mathcal M''$: $\Gamma_j\subset \Gamma_D,\;\Gamma_{j+1}\subset \Gamma_N$.
	\end{itemize}
}


For the convenience of analysis, we impose the following assumptions throughout the paper:
\begin{assumption}\label{assum}
    (i) All interior angles of $\Omega$ are at most $\frac{\pi}{2}$, except for the largest interior angle $\omega:=\omega_\ell$ at the vertex $Q:=Q_\ell$, where $1\leq \ell \leq n$. In other words, we consider a polygon with at most a single vertex that gives rise to the Sapongyan paradox.
    (ii) Dirichlet boundary conditions are imposed on all boundary edges except for the two edges adjacent to $Q$. The boundary conditions on $\Gamma_\ell$ and $\Gamma_{\ell+1}$ as shown in \Cref{set-Omega} will be specified later.
    Equivalently,
    \begin{align}\label{set-boundary}
        \tilde\Gamma := \partial\Omega \setminus (\bar\Gamma_\ell \cup \bar\Gamma_{\ell+1}) \subset \bar \Gamma_D.
    \end{align}
\end{assumption}

Position the vertex $Q$ at the origin and align its adjacent edge $\Gamma_{\ell+1}$ with the positive horizontal axis. 
Let $(r, \theta)$ denote the polar coordinates. Then $\Omega$ lies within the cone bounded by $\theta = 0$ and $\theta = \omega$. Define the sector
\[
K_\omega^R := \left\{ (r\cos\theta, r\sin\theta) \in \Omega \;\big|\; (r,\theta) \in (0,R) \times (0,\omega) \right\},
\]
where $R$ is chosen so that $K_\omega^R \subset \Omega$. A schematic of the domain and the sector is shown in \Cref{set-Omega}.

\fig{[h]
	\centerline{
		\pict{
			[scale=2]
			\draw[black] (0,0) --++ (1,0) --++ (-1.3,1.1) --++ (-0.9,-1.8) --++ (1.4,0) -- (0,0);
			\fill[green!5!] (0,0) --++ (1,0) --++ (-1.3,1.1) --++(-0.9,-1.8) --++ (1.4,0);
			\node[green, scale=1.4] at (-0.82,-0.55) {$\Omega$};
			
			\draw[-stealth] (0.8,-0.2) -- (0.6,-0.2) -- (0.5,0);
			\node[right=0pt,scale=0.75] at (0.8,-0.2) {$\Gamma_{\ell+1}$};
			\draw[-stealth] (0.8,-0.45) -- (0.14,-0.45);
			\node[right=0pt,scale=0.75] at (0.8,-0.45) {$\Gamma_\ell$};
			
			\draw[blue, fill=blue!10!] (0,0) -- (0.6,0) arc (0:286:0.6) -- (0,0);
			\fill (0,0) circle (0.5pt);
			\node[below=6pt,right=0pt,scale=0.75] at (0,0) {$Q$};
			\draw[black,->] (0.1,0) arc (0:286:0.1);
			\node[above=7pt,right,scale=0.65] at (0,0) {$\omega$};
			\node [blue] at (-0.3,0.3) {$K_\omega^R$};
		}
	}
	\caption{$\Omega$ and $K_\omega^R$.}
	\label{set-Omega}
}

\subsubsection{\texorpdfstring{$L^2$}{L^2} basis function}
The Laplace operator $\Delta: V^2 \to L^2(\Omega)$ is injective, and its range $\mathcal S := \Delta(V^2)$ forms a closed subspace of $L^2(\Omega)$ with finite codimension \cite{grisvard1992singularities}. Let $\mathcal S^\perp$ denote the orthogonal complement of $\mathcal S$ in $L^2(\Omega)$. Then one has the orthogonal decomposition $\mathcal S \oplus \mathcal S^\perp = L^2(\Omega)$. 
$\mathcal S^\perp$ is not necessarily empty. For a polygon, if $\omega < \tfrac{\pi}{2}$, we have $\mathcal S = L^2(\Omega)$, and the solution of the Poisson problem belongs to $V^2$ whenever the source term is in $L^2(\Omega)$. However, if the index $\ell \in \mathcal D^2$ and $\omega > \pi$. it has been shown that $\dim(\mathcal S^\perp) = 1$ \cite{grisvard1992singularities, li2023ac}, i.e., $\mathcal S \subsetneq L^2(\Omega)$. In this case, in general $w \notin \mathcal S$, which implies that the solution $\bar{u}$ obtained from the mixed formulation \eqref{eqn-dec} does not lie in $V^2$. When the index $\ell$ belongs to other indicator sets defined in \Cref{dfn-idx}, situations where $\mathcal S \subsetneq L^2(\Omega)$ may also occur, depending on the largest interior angle $\omega$ and the type of indicator set.
Analogous to the case $\ell \in \mathcal D^2$, the orthogonal complement $\mathcal S^\perp$ for all other cases is finite-dimensional, with an explicitly determinable basis.


\begin{definition}\label{defn}
At vertex $Q$, we define the functions $\xi_{m}=\xi_{m}(r,\theta;\tau,R) \in L^2(\Omega)$ by
\begin{align}\label{def-xi}
    \xi_{m} := \chi s_{m} + \zeta_{m},
\end{align}
where the cut-off function $\chi = \chi(r;\tau,R)\in C^\infty(\Omega)$ satisfies
\begin{align*}
    \chi(r;\tau,R) =
    \begin{cases}
        1, & 0 \leq r \leq \tau R,\\[6pt]
        0, & r \geq R,
    \end{cases}
\end{align*}
$s_{m} = s_{m}(r,\theta)\in L^2(\Omega)$ depends on indicator at $Q$
and the interior angle $\omega$,
\begin{align}\label{s-def}
    s_{m} :=
    \begin{cases}
        r^{-\tfrac{\pi}{\omega}}\sin\!\left(\tfrac{\pi}{\omega}\theta\right),
            & \ell \in\mathcal D^2,\; \omega\in(\pi,2\pi),\; m=1,\\[6pt]
        r^{-\tfrac{\pi}{\omega}}\cos\!\left(\tfrac{\pi}{\omega}\theta\right),
            & \ell \in\mathcal N^2,\; \omega\in(\pi,2\pi),\; m=1,\\[6pt]
        r^{-\tfrac{\pi}{2\omega}}\sin\!\left(\tfrac{\pi}{2\omega}\theta\right),
            & \ell \in\mathcal M',\; \omega\in\left(\tfrac{\pi}{2},\tfrac{3\pi}{2}\right],\; m=1,\\[6pt]
        r^{-\tfrac{(2m-1)\pi}{2\omega}}\sin\!\left(\tfrac{(2m-1)\pi}{2\omega}\theta\right),
            & \ell \in\mathcal M',\; \omega\in\left(\tfrac{3\pi}{2},2\pi\right),\; m=1,2,\\[6pt]
        r^{-\tfrac{\pi}{2\omega}}\cos\!\left(\tfrac{\pi}{2\omega}\theta\right),
            & \ell \in\mathcal M'',\; \omega\in\left(\tfrac{\pi}{2},\tfrac{3\pi}{2}\right],\; m=1,\\[6pt]
        r^{-\tfrac{(2m-1)\pi}{2\omega}}\cos\!\left(\tfrac{(2m-1)\pi}{2\omega}\theta\right),
            & \ell \in\mathcal M'',\; \omega\in\left(\tfrac{3\pi}{2},2\pi\right),\; m=1,2,
    \end{cases}
\end{align}
and $\zeta_{m}\in H^1(\Omega)$ is defined as the variational solution of the Poisson problem
\ali{\label{eqn-zeta}
	\ca{
		-\Delta\zeta_{m}=\Delta(\chi s_{m})&\text{ in }\Omega,\\
		\zeta_{m}=0&\text{ on }\Gamma_D,\\
		\partial_{\mathbf n}\zeta_{m}=0&\text{ on }\Gamma_N.
	}
}
\end{definition}

By the cut-off function $\chi$, it follows
\[
\chi s_{m}=\partial_{\mathbf n}(\chi s_{m})=0 \quad \text{on } \tilde\Gamma.
\]  
The definition of $s_{m}$ in \eqref{s-def} implies that for $j=\ell, \ell+1$,
\ali{\label{eqn-zeta+}
	\ca{
		\chi s_{m}|_{\Gamma_j}=0 & \text{ if } \Gamma_j \subset \Gamma_D,\\
		\partial_{\mathbf n} (\chi s_{m})|_{\Gamma_j}=0 & \text{ if } \Gamma_j \subset \Gamma_N.
	}
}
Furthermore, we observe that  
\[
\Delta(\chi s_{m})|_{K_{\omega}^{\tau R} \cup (\Omega \setminus K_{\omega}^{R})}=0, 
\quad 
\Delta(\chi s_{m})\in C^\infty(K_{\omega}^{R}\setminus K_{\omega}^{\tau R}),
\]  
which implies $\Delta(\chi s_{m})\in L^2(\Omega)$. By elliptic regularity for \eqref{eqn-zeta}, it holds $\zeta_{m} \in H^1(\Omega)$.  

We introduce the maximal extension of the Laplace operator $\Delta$ in $L^2(\Omega)$:
\begin{equation*}
    D(\Delta, L^2(\Omega)) = \{v\in L^2(\Omega); \Delta v \in L^2(\Omega)\}.
\end{equation*}
For a function $v$, let $\gamma_j v$ be the restriction of $v$ to $\Gamma_j$. 
\begin{lemma} \cite[Theorem 1.5.2]{grisvard1992singularities}
Let $\Omega$ be an open polygonal subset of $\mathbb R^2$. Then the  mapping
$$
v \mapsto \{\gamma_j v, \gamma_j (\partial v / \partial n)\},
$$
which is defined for $H^2(\Omega)$ has an unique continuous extension as an operator from $D(\Delta, L^2(\Omega))$ into $\widetilde{H}^{-{1}/{2}}(\Gamma_j) \times \widetilde{H}^{-{3}/{2}}(\Gamma_j)$.
\end{lemma}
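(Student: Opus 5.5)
The plan follows Grisvard's construction: prove a Green formula between $H^2$ and $D(\Delta,L^2(\Omega))$, then extend the trace map by duality and density.

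\textbf{Step 1 (Green formula).} For $u\in H^2(\Omega)$ and $v\in D(\Delta,L^2(\Omega))$ smooth up to the boundary, integrate by parts edge by edge. This gives
\[
\int_\Omega (u\,\Delta v - v\,\Delta u)\,\mathrm dx=\sum_{j=1}^n\Big(\big\langle \gamma_j u,\gamma_j\partial_{\mathbf n}v\big\rangle_{\Gamma_j}-\big\langle \gamma_j\partial_{\mathbf n}u,\gamma_j v\big\rangle_{\Gamma_j}\Big).
\]

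\textbf{Step 2 (surjectivity of the trace on $H^2$).} Invoke the trace theorem on polygons: the map $u\mapsto\{\gamma_j u,\gamma_j\partial_{\mathbf n}u\}_j$ sends $H^2(\Omega)$ onto data satisfying vertex compatibility conditions. Each $\widetilde H^{3/2}(\Gamma_j)\times\widetilde H^{1/2}(\Gamma_j)$ is contained in the range, with a continuous right inverse. The reason is that data vanishing to the appropriate order at the endpoints of $\Gamma_j$ can be lifted locally near one edge, with all other traces zero, and no compatibility conditions are triggered. This also shows that the pairing in Step 1 is well defined.

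\textbf{Step 3 (duality definition).} For $v\in D(\Delta,L^2(\Omega))$ and $(g,h)\in \widetilde H^{3/2}(\Gamma_j)\times\widetilde H^{1/2}(\Gamma_j)$, take the lift $u\in H^2(\Omega)$ with $\gamma_j u=g$, $\gamma_j\partial_{\mathbf n}u=h$, and all other traces zero. Define $\gamma_j v$ and $\gamma_j\partial_{\mathbf n}v$ as functionals through the left-hand side $\int_\Omega(u\Delta v-v\Delta u)$, taking $g=0$ for one and $h=0$ for the other. This functional is bounded by $\|v\|_{D(\Delta)}\|u\|_2\lesssim\|v\|_{D(\Delta)}(\|g\|+\|h\|)$, so it lies in $\widetilde H^{-1/2}(\Gamma_j)\times\widetilde H^{-3/2}(\Gamma_j)$, the duals with respect to the $\widetilde H$ spaces. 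Independence of the chosen lift follows from Step 1 applied to smooth $v$, because any two lifts differ by a $u$ with vanishing traces on $\Gamma_j$, followed by density.

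\textbf{Step 4 (density), the main obstacle.} It remains to show that smooth functions, e.g.\ $C^\infty(\bar\Omega)$, are dense in $D(\Delta,L^2(\Omega))$ with the graph norm. This simultaneously gives agreement with the classical traces on $H^2$ and uniqueness of the continuous extension. I would argue by Hahn–Banach. Suppose $(\ell_1,\ell_2)\in L^2\times L^2$ annihilates $\{(\varphi,\Delta\varphi)\}$ for all smooth $\varphi$. Extend $\ell_1,\ell_2$ by zero to $\mathbb R^2$; then $\ell_1+\Delta\ell_2=0$ in $\mathcal D'(\mathbb R^2)$, so $\ell_2\in H^2(\mathbb R^2)$ is supported in $\bar\Omega$. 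Such a function lies in $H^2_0(\Omega)$, which is where the polygon's Lipschitz, segment property is used. Then for $v\in D(\Delta,L^2)$ one approximates $\ell_2$ by $C_0^\infty$ functions to get $(\ell_1,v)+(\ell_2,\Delta v)=0$, which proves density.
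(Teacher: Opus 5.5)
Your argument is correct and is essentially Grisvard's proof, which the paper cites rather than reproduces. It has the same ingredients: Green's formula; lifting data in $\widetilde H^{3/2}(\Gamma_j)\times\widetilde H^{1/2}(\Gamma_j)$, with all other traces zero, via the trace theorem on polygons; defining $\gamma_j v$ and $\gamma_j\partial_{\mathbf n}v$ by duality; and density of smooth functions in $D(\Delta,L^2(\Omega))$ by the Hahn--Banach/zero-extension argument. One point in Step 2 should be stated more carefully: the corner compatibility conditions are not absent, including the borderline integral condition at $p=2$. Rather, they are automatically satisfied by $\widetilde H$-data, and that is why this subspace lies in the range of the trace map.
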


Therefore, the function $\xi_{m}$ in \eqref{def-xi} has the following properties.
\begin{lemma}\label{xi-unique}
(i) $\xi_{m}$ depends on $r$, $\theta$, but are independent of $\tau$, $R$. That is, for any two distinct parameter pairs $(\tau_l, R_l)$, $l=1,2$, satisfying 
\(\min\{\tau_1 R_1, \tau_2 R_2\}>\delta>0\), it follows
\[
\xi_{m}(r,\theta;\tau_1,R_1) = \xi_{m}(r,\theta;\tau_2,R_2).
\]
(ii) The function $\xi_{m}  \in D(\Delta, L^2(\Omega))$ is uniquely defined and satisfies
\begin{align}\label{eqn-xi}
\begin{cases}
-\Delta \xi_{m} = 0 & \text{in } \Omega, \\
\xi_{m} = 0 & \text{on } \Gamma_D, \\
\partial_{\mathbf n} \xi_{m} = 0 & \text{on } \Gamma_N.
\end{cases}
\end{align}
\end{lemma}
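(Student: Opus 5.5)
I would prove (ii) first and then deduce (i) from a uniqueness statement for the problem \eqref{eqn-xi} within a suitable class.

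\textbf{Step 1: $\xi_m$ solves \eqref{eqn-xi}.} By \eqref{def-xi} and \eqref{eqn-zeta}, in the sense of distributions
\[
-\Delta\xi_m=-\Delta(\chi s_m)-\Delta\zeta_m=-\Delta(\chi s_m)+\Delta(\chi s_m)=0 \quad \text{in } \Omega .
\]
Next, $\chi s_m\in L^2(\Omega)$, since its most singular exponent $-\frac{(2m-1)\pi}{2\omega}$ exceeds $-1$. In every case of \eqref{s-def} the exponent $\lambda$ satisfies $\lambda<1$: for $\mathcal D^2,\mathcal N^2$ we have $\pi/\omega<1$; for $\mathcal M',\mathcal M''$ with $m=2$ we have $3\pi/(2\omega)<1$ because $\omega>3\pi/2$. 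Hence $r^{-\lambda}\in L^2$ near $Q$. We also noted that $\Delta(\chi s_m)\in L^2(\Omega)$ and $\zeta_m\in H^1(\Omega)$. Therefore $\xi_m\in D(\Delta,L^2(\Omega))$, and the trace lemma (Grisvard, Theorem 1.5.2) gives meaning to $\gamma_j\xi_m$ and $\gamma_j\partial_{\mathbf n}\xi_m$.

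For the boundary conditions, traces are additive. By \eqref{eqn-zeta+} and the vanishing of $\chi s_m$ near $\tilde\Gamma$, the term $\chi s_m$ satisfies the homogeneous conditions on each edge. Away from $Q$ it is smooth, so the traces are classical there. Near $Q$ I would approximate by truncating $r\ge\epsilon$ and use continuity of the extended trace operator. The term $\zeta_m$ satisfies the conditions weakly from \eqref{eqn-zeta}. This gives \eqref{eqn-xi}.

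\textbf{Step 2: uniqueness and independence of $(\tau,R)$.} Let $\xi^{(l)}=\xi_m(\cdot;\tau_l,R_l)$ and set $d=\xi^{(1)}-\xi^{(2)}$. Then
\[
d=(\chi_1-\chi_2)s_m+\zeta^{(1)}-\zeta^{(2)}.
\]
The function $\chi_1-\chi_2$ vanishes for $r<\min\{\tau_1R_1,\tau_2R_2\}$, so $(\chi_1-\chi_2)s_m$ is smooth and bounded and lies in $H^1(\Omega)$. Hence $d\in H^1(\Omega)$.

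By Step 1, $d$ is harmonic with $d=0$ on $\Gamma_D$ and $\partial_{\mathbf n}d=0$ on $\Gamma_N$. Because $d\in H^1$, the Neumann condition holds in the variational sense. To justify this I would test $-\Delta d=0$ against $\phi\in V^1$ and apply Green's formula. This is legitimate because the two pieces of $d$ are separately a smooth compactly supported correction and an $H^1$ variational solution.

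Thus $d\in V^1$ and $A(d,\phi)=0$ for all $\phi\in V^1$. Taking $\phi=d$ and using $\Gamma_D\neq\emptyset$ (Poincaré/Friedrichs), we get $d=0$, which proves (i). The same argument shows that $\xi_m$ is uniquely defined: it is the unique element of $\chi s_m+V^1$ satisfying \eqref{eqn-xi}. Any two such elements differ by an $H^1$ harmonic function in $V^1$ with homogeneous conditions, hence they coincide.

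\textbf{Main obstacle.} The delicate point is rigor about boundary conditions near the singular vertex $Q$, where $\chi s_m\notin H^1$. I would isolate the singular part: it is explicit, the conditions hold pointwise on each open edge, and the extended trace from the lemma is continuous in the graph norm. The $H^1$ remainder is then handled purely variationally, which avoids any trace issue at $Q$.
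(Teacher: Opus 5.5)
Your plan follows the paper's route. The paper cites Lemma~2.4 of Li et al.\ for (i), which is exactly your Step~2, and Grisvard's Lemma~2.3.6 for (ii). Step~2 is sound on its own: the classical Green formula for the smooth function $(\chi_1-\chi_2)s_m$, combined with the variational equations for $\zeta^{(l)}$, gives $A(d,\phi)=0$ for all $\phi\in V^1$ without using Step~1. Your reading of ``uniquely defined'' as uniqueness within $\chi s_m+V^1$ is also the right one.

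The step that fails is the one you flag as the main obstacle. The truncations $v_\epsilon=(1-\eta_\epsilon)\chi s_m$, with $\eta_\epsilon=\eta(r/\epsilon)$, do not converge to $\chi s_m$ in the graph norm $\|v\|+\|\Delta v\|$. Since $s_m=r^{-\beta_m}\Phi_m$ is harmonic,
\[
\Delta v_\epsilon-\Delta(\chi s_m)=-\bigl(s_m\Delta\eta_\epsilon+2\nabla\eta_\epsilon\cdot\nabla s_m\bigr),
\]
which has size $\epsilon^{-2-\beta_m}$ on an annulus of area $\sim\epsilon^2$. Its $L^2$ norm therefore grows like $\epsilon^{-1-\beta_m}$. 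A better cut-off cannot fix this. Each $v_\epsilon$ is smooth and satisfies the boundary conditions, so $v_\epsilon\in V^2$. Moreover, $V^2$ is closed in the graph norm, because $\|v\|_2\lesssim\|\Delta v\|$ on $V^2$. Any graph-norm limit of the $v_\epsilon$ would thus lie in $H^2(\Omega)$, whereas $\chi s_m\notin H^1(\Omega)$. So you cannot invoke continuity of the extended trace operator, and as written the boundary conditions in \eqref{eqn-xi} are not proved.

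Fortunately, no argument at $Q$ is needed. The extended traces lie in $\widetilde H^{-1/2}(\Gamma_j)$ and $\widetilde H^{-3/2}(\Gamma_j)$. These are the duals of $\widetilde H^{1/2}(\Gamma_j)$ and $\widetilde H^{3/2}(\Gamma_j)$, in which $C_c^\infty(\Gamma_j)$ is dense. Hence each trace is determined by its action on test functions supported in the open edge. For $\psi\in C_c^\infty(\Gamma_j)$, choose $v\in H^2(\Omega)$ supported in a small neighbourhood of $\operatorname{supp}\psi$, away from the vertices, with $(\gamma_j v,\gamma_j\partial_{\mathbf n}v)$ equal to $(\psi,0)$ or $(0,\psi)$. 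Then compare the generalized Green formula with the classical one, which is valid because $\chi s_m$ is smooth on $\operatorname{supp}v$. This identifies the extended traces of $\chi s_m$ on $\Gamma_j$ with its classical traces, and those vanish by \eqref{eqn-zeta+}. Your own remark that the conditions hold pointwise on each open edge is therefore already the full argument.

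If you ever need to pair against functions that do not vanish at $Q$, as in the proof that $\xi_m\in\mathcal S^\perp$, use Green's formula on $\Omega\setminus B_\epsilon(Q)$ with an explicit arc integral, not graph-norm approximation.
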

\begin{proof}
The proof of (i) is analogous to Lemma 2.4 in \cite{li2023ac}, while that of (ii) follows from Lemma 2.3.6 in \cite{grisvard1992singularities}.
\end{proof}


Since $\xi_m(r,\theta;\tau,R)$ is independent of $\tau$ and $R$ (see \Cref{xi-unique}), we may simply write
$$\xi_m(r,\theta):=\xi_m(r,\theta;\tau,R).$$


\subsubsection{Orthogonal space of the Laplace operator image}

We explicitly characterize the orthogonal complement $\mathcal{S}^\perp$. We begin by identifying the possible components contained in it.

\lem{\label{xi perp S}
    The function $\xi_m$ defined in \eqref{def-xi} belongs to $\mathcal{S}^\perp$. In particular, for $\ell \in \mathcal{M}' \cup \mathcal{M}''$ and $\omega \in ({3\pi}/{2}, 2\pi)$, the functions $\xi_m$, $m = 1, 2$, are linearly independent.
}

\begin{proof}
For any $v \in \mathcal{S}$, there exists $z \in V^2$ such that $-\Delta z = v$. Taking the inner product of $v$ with $\xi_m$ gives
	\al{
		\int_\Omega v \xi_m \mathrm dx
		& = \int_\Omega -\Delta z \: \xi_m \mathrm dx = \int_\Omega -z \Delta\xi_m \mathrm dx + \int_{\partial\Omega} z \partial_{\mathbf n} \xi_m - \partial_{\mathbf n} z \: \xi_m \mathrm ds \\
		& = \int_\Omega -z \Delta\xi_m \mathrm dx + \int_{\Gamma_D} -\partial_{\mathbf n}z \: \xi_m \mathrm ds + \int_{\Gamma_N} z \partial_{\mathbf n}\xi_m \mathrm ds = 0,
	}
where we have used \eqref{eqn-xi}.
Therefore, $\xi_m \in \mathcal{S}^\perp$.

If $\ell \in \mathcal{M}' \cup \mathcal{M}''$ and $\omega \in ({3\pi}/{2}, 2\pi)$, suppose that for some constants $k_1, k_2$，
\begin{align*}
k_1 \xi_1 + k_2 \xi_2 = k_1 \zeta_1 + k_2 \zeta_2 + \chi (k_1 s_1 + k_2 s_2)=0, 
\end{align*}
which implies $\chi (k_1 s_1 + k_2 s_2) = - (k_1 \zeta_1 + k_2 \zeta_2) \in H^1(\Omega)$. Since $s_m \not\in H^1(\Omega)$, $m=1,2$, it holds
\begin{equation}\label{smnon}
    k_1 s_1 + k_2 s_2 =0.
\end{equation}
Multiplying \eqref{smnon} by $r^{-\frac{\pi}{2\omega}}$ yields $k_2 r^{-\frac{\pi}{2\omega}} s_2 = - k_1 r^{-\frac{\pi}{2\omega}} s_1 \in L^2(\Omega)$. However, since $
r^{-\frac{\pi}{2\omega}} s_2  \not \in L^2(\Omega)$, it must be that $k_2=0$, and consequently $k_1=0$.
Hence, $\xi_1$ and $\xi_2$ are linearly independent.
\end{proof}

Regarding the dimension of $\mathcal{S}^\perp$ in a general polygonal domain, we recall the following result.
\lem{\label{thm-main}\cite[Theorem 2.3.7]{grisvard1992singularities}
    Given a general polygonal domain, the dimension of $\mathcal{S}^\perp$ is equal to the cardinality of the set $\{\lambda_{j,m}:\ 0 < \lambda_{j,m} < 1,\ m \geq 1\}$, that is,
\begin{equation}\label{dimsp}
    \dim \mathcal{S}^\perp = \mathrm{card}\{\lambda_{j,m}:\ 0 < \lambda_{j,m} < 1,\ m \geq 1\}.
\end{equation}
Here, $\lambda_{j,m}$ $(j=1,\ldots,n;\ m \geq 1)$ denote the eigenvalues of the boundary value problem
\begin{equation}\label{eigen}
    -\partial_\theta^2 \phi = \lambda^2 \phi, \qquad \theta \in (0,\omega_j), \quad \phi \in D_j,
\end{equation}
where the admissible sets $D_j$ are defined as
\[
D_j =
\begin{cases}
\{\phi \in H^2([0,\omega_j]) : \phi(0)=0,\ \phi(\omega_j)=0\}, & j \in \mathcal D^2, \\[1ex]
\{\phi \in H^2([0,\omega_j]) : \phi'(0)=0,\ \phi'(\omega_j)=0\}, & j \in \mathcal N^2, \\[1ex]
\{\phi \in H^2([0,\omega_j]) : \phi'(0)=0,\ \phi(\omega_j)=0\}, & j \in \mathcal M', \\[1ex]
\{\phi \in H^2([0,\omega_j]) : \phi(0)=0,\ \phi'(\omega_j)=0\}, & j \in \mathcal M''.
\end{cases}
\]
}

For a general polygonal domain, it can be readily verified that, for a fixed $j$, the solution of the eigenvalue problem \eqref{eigen} is given by
\[
\lambda_{j,m} =
\begin{cases}
m {\pi}/{\omega_j}, & j \in \mathcal D^2, \\[1ex]
(m-1) {\pi}/{\omega_j}, & j \in \mathcal N^2, \\[1ex]
\left(m-\tfrac{1}{2}\right) {\pi}/{\omega_j}, & j \in \mathcal M' \cup \mathcal M'',
\end{cases}
\qquad m=1,2,\ldots
\]
Then, the dimension of $\mathcal{S}^\perp$ in \eqref{dimsp} reduces to
\begin{align}\label{dimsgen}
    d_\perp := \dim \mathcal{S}^\perp=& \ \mathrm{card}\bbr{j\in\mathcal D^2\cup\mathcal N^2:\ \omega_j\in(\pi, 2\pi)}+\mathrm{card}\bbr{j\in\mathcal M'\cup\mathcal M'':\ \omega_j\in(\frac\pi2,\frac{3\pi}2]}\\
	&+2\mathrm{card}\bbr{j\in\mathcal M'\cup\mathcal M'':\ \omega_j\in(\frac{3\pi}2,2\pi)}, \nonumber
\end{align}
which shows that $\mathcal{S}^\perp$ is finite-dimensional, with its dimension determined by the interior angles $\omega_j$ and the types of boundary conditions prescribed on the polygonal domain.

Upon revisiting our earlier assumptions, we obtain to the following result.
\begin{corollary}\label{lem-perp}
    Under \Cref{assum}, the dimension of the orthogonal space $\mathcal{S}^\perp$ holds
    \begin{equation}\label{dimsperp}
        d_\perp = 
        \ca{
            1, \quad \ell\in \mathcal D^2\cup \mathcal N^2, \omega\in (\pi, 2\pi), \text{ or } \ell\in \mathcal M'\cup\mathcal M'', \omega\in (\frac12\pi, \frac32\pi], \\
            2, \quad \ell\in \mathcal M'\cup \mathcal M'', \omega\in (\frac32\pi, 2\pi).
        }
    \end{equation}
    Furthermore, $\mathcal{S}^\perp$ can be expressed as 
    \begin{equation}\label{sperp}
        \mathcal S^\perp = \mathop{span}_m\{\xi_m\},
    \end{equation}
    where $\xi_m$, $m=1, \dots, d_\perp$ is given in \eqref{def-xi}.
\end{corollary}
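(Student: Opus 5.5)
The plan has two parts: first compute $d_\perp$ from formula \eqref{dimsgen} using \Cref{assum}, then show that the $\xi_m$ of \Cref{defn} span $\mathcal S^\perp$.

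\textbf{Dimension count.} Under \Cref{assum}(i), every vertex $Q_j$ with $j\neq \ell$ has $\omega_j\le \pi/2$. Such a vertex contributes nothing to \eqref{dimsgen}. Indeed, each term there requires $\omega_j>\pi$ for $j\in\mathcal D^2\cup\mathcal N^2$, or $\omega_j>\pi/2$ for $j\in\mathcal M'\cup\mathcal M''$.

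Vertices added at boundary-condition transitions have angle $\pi$. By \Cref{assum}(ii), every edge other than $\Gamma_\ell,\Gamma_{\ell+1}$ lies in $\bar\Gamma_D$. Hence no transition vertex other than $Q$ itself can carry an $\mathcal M'$ or $\mathcal M''$ index. Its index is therefore in $\mathcal D^2$, which requires $\omega_j>\pi$ to contribute, and $\pi$ does not qualify. Strictly speaking this also needs to handle transitions at $Q_{\ell-1}$ and $Q_{\ell+1}$, whose angles are $\le\pi/2$ by (i), so they contribute nothing either.

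Only $j=\ell$ remains, and reading off \eqref{dimsgen} for the four index types gives \eqref{dimsperp}. I would also note that the list of eigenvalues $\lambda_{\ell,m}\in(0,1)$ matches exactly the exponents in \eqref{s-def}: the singular functions are $r^{-\lambda}\phi(\theta)$ with $0<\lambda<1$. In the $\mathcal N^2$ case, $\lambda_{\ell,1}=0$ is excluded, so only $\pi/\omega$ counts.

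\textbf{Span.} By \Cref{xi perp S}, each $\xi_m$ with $m=1,\dots,d_\perp$ lies in $\mathcal S^\perp$.
\begin{itemize}
\item If $d_\perp=1$, it suffices to show $\xi_1\neq 0$. If $\xi_1=0$, then $\chi s_1=-\zeta_1\in H^1(\Omega)$. This is impossible because $s_1\notin H^1$ near $Q$: its gradient behaves like $r^{-\lambda-1}$ with $\lambda>0$, and this is not square integrable in two dimensions.
\item If $d_\perp=2$, \Cref{xi perp S} gives linear independence of $\xi_1,\xi_2$ directly.
\end{itemize}
In both cases we have $d_\perp$ linearly independent elements of a space of dimension $d_\perp$ (by \Cref{thm-main}), so they form a basis and \eqref{sperp} follows.

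\textbf{Main obstacle.} The only delicate point is making sure the indicator classification at artificial vertices with angle $\pi$ does not introduce extra contributions. I would dispose of this by the argument above: all non-$\ell$ indices are in $\mathcal D^2$ with angle at most $\pi$, and \eqref{dimsgen} uses the open interval $(\pi,2\pi)$. A second point worth a brief check is that $\xi_m\in L^2$ (and not in $H^1$) is consistent with its membership in $\mathcal S^\perp$. This holds because $\lambda<1$, so $r^{-\lambda}$ is square integrable in two dimensions.
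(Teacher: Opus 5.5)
Your proposal is correct and follows the paper's own route. You read off $d_\perp$ from \eqref{dimsgen}, where only $j=\ell$ can contribute under \Cref{assum}, and then combine membership in $\mathcal S^\perp$ and linear independence from \Cref{xi perp S} with $\dim\mathcal S^\perp=d_\perp$ to obtain a basis. You are a bit more careful than the paper, which never checks that $\xi_1\neq 0$ when $d_\perp=1$; your $H^1$ argument supplies this. Your paragraph on angle-$\pi$ transition vertices is unnecessary and slightly muddled, though: a transition vertex is by definition in $\mathcal M'\cup\mathcal M''$, and \Cref{assum}(i) already gives $\omega_j\le\pi/2$ for every $j\neq\ell$, including $Q_{\ell\pm1}$, which settles the count by itself.
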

\begin{proof}
    By the definition of $\xi_m$ in \eqref{def-xi} and \Cref{thm-main}, it follows  $d_\perp = \mathop {card}\limits_m \{\xi_m\}$. 
    Additionally, \Cref{xi perp S} implies $\xi_m \in \mathcal{S}^\perp$, and hence \eqref{sperp} holds.
\end{proof}

    




\subsection{Modified mixed formulation and its well-posedness}
By \Cref{lem-perp}, any function $w\in L^2(\Omega)$ can be decoupled as
\ali{\label{w-dec}
	w=w_{\mathcal S}+w_{\mathcal S^\perp},
}
where $w_{\mathcal S}\in \mathcal S$ and $w_{\mathcal S^\perp}\in \mathcal S^\perp$, satisfying
\al{
	& w_{\mathcal S^\perp}=\sum_{m=1}^{d_\perp} c_m \xi_m, \text{ and } (w_{\mathcal S}, w_{\mathcal S^\perp}) =0.
}
Taking the inner product of both sides of \eqref{w-dec} with $\xi_{m}$ yields a linear system for the coefficients: 
\begin{equation}\label{linsys}
    \Xi C = W,
\end{equation}
where $\Xi = [(\xi_m, \xi_{m'})]_{m, m'=1}^{d_\perp} \in \mathbb{R}^{d_\perp \times d_\perp}$ is the associated Gram matrix, $C=[c_m]_{m=1}^{d_\perp} \in \mathbb{R}^{d_\perp}$ is the coefficient vector, and $W =[(w, \xi_m)]_{m=1}^{d_\perp}  \in \mathbb{R}^{d_\perp}$ is the right-hand side vector.
\begin{lemma}
    The linear system \eqref{linsys} admits a unique solution.
\end{lemma}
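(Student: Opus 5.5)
The plan is to show that the Gram matrix $\Xi$ is symmetric positive definite, hence nonsingular. Then \eqref{linsys} has exactly one solution $C=\Xi^{-1}W$ for every right-hand side $W$.

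Symmetry is immediate, since $(\xi_m,\xi_{m'})=(\xi_{m'},\xi_m)$ for the real $L^2(\Omega)$ inner product. For definiteness, take any vector $a=[a_m]_{m=1}^{d_\perp}\in\mathbb R^{d_\perp}$ and compute
\[
a^T\Xi a=\sum_{m,m'=1}^{d_\perp} a_m a_{m'}(\xi_m,\xi_{m'})=\Big\|\sum_{m=1}^{d_\perp}a_m\xi_m\Big\|^2\ \ge 0 .
\]
Equality holds if and only if $\sum_m a_m\xi_m=0$ in $L^2(\Omega)$. So $\Xi$ is positive definite exactly when $\{\xi_m\}_{m=1}^{d_\perp}$ is linearly independent in $L^2(\Omega)$.

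The independence is the only real content, and I would split it into cases according to $d_\perp$ from \Cref{lem-perp}.
\begin{itemize}
\item If $d_\perp=1$, it suffices that $\xi_1\neq 0$. Suppose $\xi_1=0$. Then $\chi s_1=-\zeta_1\in H^1(\Omega)$. But $s_1$ behaves like $r^{-\lambda}$ with $\lambda>0$ near $Q$, so $\chi s_1\notin H^1(\Omega)$, which is a contradiction. This is the same argument as in \Cref{xi perp S}.
\item If $d_\perp=2$, which happens only for $\ell\in\mathcal M'\cup\mathcal M''$ with $\omega\in(3\pi/2,2\pi)$, linear independence of $\xi_1,\xi_2$ is precisely the second assertion of \Cref{xi perp S}.
\end{itemize}

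Alternatively, independence follows directly from \Cref{lem-perp}: the $\xi_m$ span $\mathcal S^\perp$, and there are $d_\perp=\dim\mathcal S^\perp$ of them, so they form a basis. Either way the obstacle has already been handled by the earlier lemmas. The one point worth stating explicitly is that $\xi_1\neq0$ in the one-dimensional case, which relies on the non-$H^1$ singular behaviour of $s_1$ at $Q$.

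Combining these, $\Xi$ is symmetric positive definite, so it is invertible and the solution $C$ exists and is unique. It also follows that $w_{\mathcal S^\perp}=\sum_m c_m\xi_m$ is the $L^2$ projection of $w$ onto $\mathcal S^\perp$.
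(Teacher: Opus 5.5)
Your proof is correct and is essentially the paper's argument: $\Xi$ is nonsingular because the $\xi_m$ are linearly independent (by \Cref{xi perp S} when $d_\perp=2$). Your quadratic-form identity $a^T\Xi a=\|\sum_m a_m\xi_m\|^2$ plays the role of the paper's strict Cauchy--Schwarz bound $|\Xi|>0$, and your check that $\xi_1\neq 0$ (since $\chi s_1\notin H^1(\Omega)$) makes explicit what the paper tacitly uses when it divides by $\|\xi_1\|^2$.

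Drop the ``alternative'' via \Cref{lem-perp}, though: the paper obtains $\mathcal S^\perp=\mathrm{span}_m\{\xi_m\}$ from $\xi_m\in\mathcal S^\perp$ together with the count $d_\perp$, and that step already presupposes the independence, so this route is circular.
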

\begin{proof}
If $d_\perp = 1$, the coefficient is given explicitly by
\(
    c_1 = \frac{(w,\xi_1)}{\|\xi_1\|^2}.
\)
If $d_\perp = 2$, by \Cref{xi perp S}, the basis functions $\xi_1$ and $\xi_2$ are linearly independent. Therefore, the associated Gram matrix $\Xi$ satisfies 
\[
    |\Xi| = \|\xi_1\|^2 \|\xi_2\|^2 - (\xi_1,\xi_2)^2 > 0,
\]
which implies that $\Xi$ is nonsingular. Consequently, the coefficient vector $C$ is uniquely determined. 
\end{proof}

Then the naive mixed formulation \eqref{eqn-dec} can be refined into the modified mixed formulation
\ali{\label{eqn-decModified}
    \ca{
	-\Delta w=f \ &\text{in }\Omega,\\
	w=0 \ &\text{on }\Gamma_D,\\
	\partial_{\mathbf n}w=0 \ &\text{on }\Gamma_N,
    }\qquad 
    \ca{
	-\Delta\tilde u=w_{\mathcal S} \ &\text{in }\Omega,\\
	\tilde u=0 \ &\text{on }\Gamma_D,\\
	\partial_{\mathbf n}\tilde u=0 \ &\text{on }\Gamma_N.
    } 
}
The corresponding variational formulation seeks $\tilde {u}, w \in V^1$ such that
\seqn{\label{eqn-decModified-var}
	\nca{{}
		A(w,\phi)=(f,\phi), \quad\forall\phi\in V^1,\label{eqn-decModified-var-1}\\
		A(\tilde u,\psi)=(w_{\mathcal S},\psi), \quad\forall\psi\in V^1,\label{eqn-decModified-var-2}
	}
}

For the modified mix formulation \eqref{eqn-decModified}, we have the following results.
\thm{\label{equivalence}
For $f\in L^2(\Omega)$, the solution $\tilde{u}$ to the modified variational problem \eqref{eqn-decModified-var} is identical to the solution $u$ of the biharmonic problem \eqref{eqn-var}, i.e., $\tilde{u} = u \in V^2$.
}
\prf{
Since $w_\mathcal{S} \in \mathcal{S}$, it follows $\tilde u \in V^2$.
    For any $v\in V^2$, one has 
    \al{
        (\Delta \tilde u, \Delta v)_\Omega = (w - w_{\mathcal S^\perp}, \Delta v)_\Omega = - (w, \Delta v)_\Omega + (w_{\mathcal S^\perp}, \Delta v)_\Omega = (\nabla w, \nabla v)_\Omega - (w, \partial_{\mathbf n} v)_{\partial \Omega}.
    }
    Since $w\in V^1,\, v\in V^2$, it holds $(w, \partial_{\mathbf n} v)_{\partial \Omega} = (w, \partial_{\mathbf n} v)_{\Gamma_D} + (w, \partial_{\mathbf n} v)_{\Gamma_N} = 0.$
    Thus,
    \al{
        (-\Delta \tilde u, \Delta v)_\Omega = (\nabla w, \nabla v)_\Omega = (f, v)_\Omega.
    }
    Therefore, $\tilde u$ satisfies \eqref{eqn-var}.  By the uniqueness of the solution to \eqref{eqn-var} in $V^2$, it holds $\tilde u =u$.
}

In the following, we will not distinguish between $u$ and $\tilde u$, and will uniformly denote them as $u$.

\lem{
    The solution $(w, u)$ to the modified mixed formulation \eqref{eqn-decModified-var} satisfies
    \al{
        \|w\|_1 \lesssim \|f\|, \quad \|u\|_2 \lesssim \|f\|.
    }
}

\section{\texorpdfstring{$C^0$}{C^0} finite element method}\label{section-3}

Based on the modified mixed formulation, we propose a $C^0$ linear finite element method for the biharmonic problem \eqref{eqn-initial}, and derive the error estimates.

Denote by $\mathcal T_h$ the triangulation of $\Omega$. The $C^0$-Lagrange finite element space $V_h$ is given by
\begin{equation}\label{Vhspace}
    V_h:=\{v\in C^0(\Omega)\cap V^1:v|_K\subset P_1,\; \forall K\in\mathcal T_h\},
\end{equation}
where $P_1$ denotes the space of linear polynomials on each element $K$.

We propose a $C^0$ finite element algorithm for the biharmonic problem \eqref{eqn-initial} based on \eqref{eqn-decModified} in Algorithm \ref{algorithm-modified}. 

\renewcommand{\thealgocf}{3.1}

\begin{algorithm}[hbt!]
\normalsize
\medskip
\caption{$C^0$ finite element method}
\label{algorithm-modified}
\SetKwProg{Step}{Step 1: }{}{}
\Step{ Solve for $w_h \in V_h$ from
}{
    \begin{minipage}{0.8\linewidth}
        $$A(w_h, v_h) = (f, v_h), \quad \forall v_h \in V_h.$$
    \end{minipage}
}
\SetKwProg{Step}{Step 2: }{}{}
\Step{Solve for $\zeta_{m,h} \in V_h$,  $1\leq m \leq d_\perp$ from}
 {
        \begin{minipage}{0.8\linewidth}
            $$A(\zeta_{m,h}, v_h) = (\Delta (\chi s_m), v_h),\quad \forall v_h \in V_h,$$
            where $\chi$, $s_m$ are given in \Cref{defn};\\
            Compute $\xi_{m,h} = \zeta_{m,h} + \chi s_m$; 
        \end{minipage}
}
\SetKwProg{Step}{Step 3: }{}{}
\Step{
        Compute the coefficients
}{
\begin{minipage}{0.8\linewidth}
    \begin{equation}\label{coefmat}
        \Xi_h C_h = W_h,
    \end{equation}
    where $\Xi_h \in \mathbb{R}^{d_\perp \times d_\perp}$ is the coefficient matrix with the $mm'$th entry $(\xi_{m,h}, \xi_{m',h})$, $C_h=[c_{1,h}, \ldots, c_{d_\perp,h}]^\top \in \mathbb{R}^{d_\perp}$, and $W_h \in \mathbb{R}^{d_\perp}$ with the $m$th entry $(w_h, \xi_{m,h})$.    
\end{minipage}
}
\SetKwProg{Step}{Step 4: }{}{}
\Step{ Solve $u_h\in V_h$ from}
 {
        \begin{minipage}{0.8\linewidth}
             \begin{equation}\label{fem_u}
        A(u_h, v_h) = \left(w_h - \sum\limits_{m=1}^{d_\perp} c_{m,h} \xi_{m,h}, v_h \right), \quad \forall v_h \in V_h.
    \end{equation}
        \end{minipage}
}
\end{algorithm}

By the Lax-Milgram Theorem, the equations in Algorithm \ref{algorithm-modified} are well-posed. 
\begin{remark}\label{def-alpha}
    By \Cref{assum}, if $\ell \in \mathcal D^2 \cup \mathcal N^2$ for $\omega < \pi$ and $\ell \in \mathcal M' \cup \mathcal M''$ for $\omega \le \frac{\pi}{2}$, the solution satisfies $u \in V^2$. For these cases, Algorithm~\ref{algorithm-modified} reduces to the naive mixed finite element method.
\end{remark}
\begin{remark}
    Algorithm~\ref{algorithm-modified} extends naturally to the biharmonic equation with Neumann boundary conditions (\( \Gamma_N = \partial \Omega \)), which will be discussed in \Cref{section-4}, by substituting the finite element space \( V_h \) with \( \bar V_h \) defined in \eqref{barVhspace}.
\end{remark}

In every case, \(s_m\) in \eqref{s-def} has the form
\[
s_m(r,\theta)=r^{-\beta_{m}} \Phi_m(\theta),
\]
where \(\beta_{m}>0\) is the positive number appearing as the absolute value of the exponent of \(r\) in \eqref{s-def}, $\Phi_m$ denotes an appropriate trigonometric functions.  
By the regularity theory for the Poisson equation in polygonal domains (see \cite{grisvard1992singularities}), since \(w,\zeta_m\) solve the Poisson problems \eqref{eqn-decModified}, \eqref{eqn-zeta}, we have
\[
w,\ \zeta_m \in H^{1+\alpha}(\Omega),
\]
where $\alpha := \min_m \beta_m - \varepsilon$, 
and \(\varepsilon>0\) is chosen sufficiently small.
For simplicity of the error analysis below, we further assume that the solution exhibits a singular behavior characterized by $\alpha<1$.
Then, standard finite element error estimates yield the following result.
\lem{\label{PoissonError}
    Let $w$, $\zeta_m$, and $\xi_m$ be the solutions of \eqref{eqn-decModified}, \eqref{eqn-zeta}, and \eqref{def-xi}, respectively, and let $w_h$, $\zeta_{m,h}$, and $\xi_{m,h}$ be the corresponding finite element approximations in Algorithm \ref{algorithm-modified}. Then, 
    \al{
    	& \|w-w_h\|_1 \lesssim h^\alpha \|w\|_{1+\alpha}, \quad 
        \|w-w_h\| \lesssim h^{2\alpha} \|w\|_{1+\alpha}, \hspace{1cm}\\
    	& \|\xi_m-\xi_{m,h}\|_1 \lesssim h^\alpha \|\zeta_m\|_{1+\alpha}, \quad 
        \|\xi_m-\xi_{m,h}\| \lesssim h^{2\alpha} \|\zeta_m\|_{1+\alpha},
    } 
    where $w$ and $\zeta_m$ satisfy the following regularity estimates:
    \[
        \|w\|_{1+\alpha} \lesssim \|f\|, \quad
        \|\zeta_m\|_{1+\alpha} \lesssim \|\Delta (\chi s_m)\|.
    \]
}
\lem{\label{dualdiff}
Let $\gamma>0$ be a constant, and let $p, q$ be bounded scalars. If
\(
|p - p_h| \lesssim h^{\gamma}, \ |q - q_h| \lesssim h^{\gamma},
\)
then there exists $h_0>0$ such that for all $h<h_0$, the quantities $p_h$ and $q_h$ are uniformly bounded, and
\[
|pq - p_h q_h| \lesssim h^{\gamma}.
\]
}
\begin{proof}
By assumption, there exists $C>0$ such that 
\(|p-p_h|\le C h^{\gamma}\) and \(|q-q_h|\le C h^{\gamma}\) for all sufficiently small $h$.
Choose $h_0>0$ such that $C h_0^{\gamma} < 1$. Then for $h<h_0$,
\[
|p_h|\le |p| + |p-x_h| \le |p| + 1, 
\qquad
|q_h|\le |q| + |q-y_h| \le |q| + 1,
\]
so $p_h$ and $q_h$ are uniformly bounded.

Furthermore, it follows
\[
|pq - p_h q_h|
= |p(q - q_h) + (p - p_h)q_h|
\le |p|\,|q-q_h| + |p-p_h|\,|q_h|.
\]
Using the bounds above, we obtain
\[
|pq - p_h q_h|
\le |p|\, C h^{\gamma} + C h^{\gamma}(|q|+1)
\lesssim h^{\gamma}.
\]
\end{proof}
 
In terms of the finite element approximation $u_h$ in Algorithm \ref{algorithm-modified}, we have the following result.
\thm{\label{thmerr} Let $u$ be the solution to the modified mixed formulation \eqref{eqn-decModified-var}, and $u_h$ be  finite element approximation in Algorithm \ref{algorithm-modified}. Then,
\begin{align}\label{uleerr-}
    \|u-u_h\|_1 \lesssim h^{\min\{1,2\alpha\}}.
\end{align}
More specifically,
\begin{align}\label{uleerr}
    \|u-u_h\|_1 \lesssim  
        \ca{
            h^{2\alpha}, \quad & \text{ if } \ell\in \mathcal M'\cup \mathcal M'',\omega > \pi,\\
    		h, & \text{otherwise.} 
    	}
\end{align}
}
\begin{proof}
Taking $\psi=v_h$ in \eqref{eqn-decModified-var-2} and subtracting \eqref{fem_u} from \eqref{eqn-decModified-var-2} yield
\begin{equation}\label{galerkin}
    A(u-u_h, v_h) = \left((w - \sum_{m=1}^{d_\perp} c_m\xi_m) - (w_h - \sum_{m=1}^{d_\perp} c_{m,h}\xi_{m,h}),v_h \right).
\end{equation}

    Let $u_I \in V_h$ denote the Lagrange interpolation of $u$ onto $V_h$. Since $u \in V^2 \subset H^2(\Omega)$, the standard interpolation estimate on the triangulation $\mathcal{T}_h$ \cite{ciarlet2003singular} yields
    \al{
        \|u - u_I\|_1 
        \lesssim
        h \|u\|_2.
    }
    Denote $\varepsilon_h = u_I - u$ and $e_h=u_I-u_h$. 
	By taking $v_h = e_h$ in \eqref{galerkin}, it follows
    \al{
        |e_h|_1^2 
        & = (\nabla \varepsilon_h, \nabla e_h) + (w - w_h, e_h) - \sum_{m=1}^{d_\perp} (c_m - c_{m,h}) (\xi_m, e_h) - \sum_{m=1}^{d_\perp} c_{m,h} (\xi_m - \xi_{m,h}, e_h) \\
        & \leq |\varepsilon_h|_1|e_h|_1 + (\|w - w_h\| + \sum_{m=1}^{d_\perp} |c_m - c_{m,h}| \|\xi_m\| + \sum_{m=1}^{d_\perp} |c_{m,h}| \|\xi_m - \xi_{m,h}\|)\|e_h\| \\
        & \lesssim \left(|\varepsilon_h|_1 + \|w - w_h\| + \sum_{m=1}^{d_\perp} |c_m - c_{m,h}| \|\xi_m\| + \sum_{m=1}^{d_\perp} |c_{m,h}| \|\xi_m - \xi_{m,h}\|\right)\|e_h\|_1.
    }
    By Poincare's inequality \cite{meyers1978integral}, 
	\al{
		\|e_h\|_1^2 
        & \lesssim |e_h|_1^2 \lesssim (|\varepsilon_h|_1 + (\|w - w_h\| + \sum_{m=1}^{d_\perp} |c_m - c_{m,h}| \|\xi_m\| + \sum_{m=1}^{d_\perp} |c_{m,h}| \|\xi_m - \xi_{m,h}\|)\|e_h\|_1,
	}
    which implies 
    \begin{equation}\label{projerr}
        \|e_h\|_1 \lesssim |u - u_I|_1 + \|w - w_h\| + \sum_{m=1}^{d_\perp} |c_m - c_{m,h}| \|\xi_m\| + \sum_{m=1}^{d_\perp} |c_{m,h}| \|\xi_m - \xi_{m,h}\|.
    \end{equation}
	By \eqref{linsys} and \eqref{coefmat}, the coefficients can be expressed as 
	\al{
		c_m = \frac1{|\Xi|} \sum_{m'=1}^{d_\perp} \Xi_{(m',m)}(w, \xi_{m'}), 
		\qquad 
		c_{m,h} = \frac1{|\Xi_h|} \sum_{m'=1}^{d_\perp} \Xi_{h,(m',m)}(w_h, \xi_{m',h}).
	}
	The notations $\Xi_{(m',m)}$ and $\Xi_{h,(m',m)}$ denote the algebraic cofactors of $\Xi$ and $\Xi_h$, respectively. Specially, if $\dim \mathcal S^\perp = 1$, it holds $\Xi_{(1,1)} = \Xi_{h,(1,1)} = 1$. 
    
    By \Cref{PoissonError}, 
    there exists $h_0>0$ such that for all $h<h_0$, it holds $\|\xi_{m,h}\|_{1} \leq \|\xi_m\|_{1}+1$ and $\|w_h\|_{1} \leq \|w\|_{1}+1$.  
	Note that for $m,m'\geq 1$,
    \begin{align}
        \left|(\xi_m, \xi_{m'}) - (\xi_{m,h}, \xi_{m',h})\right|
		& = \left|(\xi_m, \xi_{m'} - \xi_{m',h}) + (\xi_m - \xi_{m,h}, \xi_{m',h})\right| \nonumber\\
		& \leq \|\xi_{m'} - \xi_{m',h}\| \|\xi_m\| + \|\xi_m - \xi_{m,h}\| \|\xi_{m',h}\| \lesssim h^{2\alpha}, \label{xixierr}\\
		\left|(w, \xi_{m'}) - (w_h, \xi_{m',h})\right|
		& \leq \left|(w, \xi_{m'} - \xi_{m',h}) + (w - w_h, \xi_{m',h})\right| \nonumber\\
		& \lesssim \|\xi_{m'} - \xi_{m',h}\| \|w\| + \|w - w_h\| \|\xi_{m',h}\| \lesssim h^{2\alpha}. \label{wxierr}
    \end{align}
Since each quantity $\Xi_{(m',m)}(w,\xi_{m'})$ and $|\Xi|$ is a linear combination of products of the terms $(\xi_m,\xi_{m'})$, $(w,\xi_{m'})$, and constants, it follows from \Cref{dualdiff} with $\gamma = 2\alpha$, together with \eqref{xixierr} and \eqref{wxierr}, that
\[
\left|\Xi_{(m',m)}(w,\xi_{m'}) - \Xi_{h,(m',m)}(w_h,\xi_{m',h})\right|
\lesssim h^{2\alpha},
\qquad
\left||\Xi| - |\Xi_h|\right|
\lesssim h^{2\alpha}.
\]
Then it follows
    \begin{align*}
        |c_m-c_{m,h}| 
		& = \left|\frac1{|\Xi|} \sum_{m'=1}^{d_\perp} \Xi_{(m',m)} (w, \xi_{m'}) - \frac1{|\Xi_h|} \sum_{m'=1}^{d_\perp} \Xi_{h,(m',m)} (w_h, \xi_{m',h})\right| \\
		& = \left|\frac1{|\Xi|} \sum_{m'=1}^{d_\perp} \left(\Xi_{(m',m)} (w, \xi_{m'}) -\Xi_{h,(m',m)}(w_h, \xi_{m',h})\right) + \left(\frac1{|\Xi|} - \frac1{|\Xi_h|}\right) \sum_{m'=1}^{d_\perp} 
        \Xi_{h,(m',m)} (w_h, \xi_{m',h})\right| \\
		& \leq \frac1{|\Xi|} \sum_{m'=1}^{d_\perp} \left|\Xi_{(m',m)} (w, \xi_{m'}) - \Xi_{h,(m',m)}(w_h, \xi_{m',h}) \right| + \frac{||\Xi_h|-|\Xi||}{|\Xi\Xi_h|} \sum_{m'=1}^{d_\perp} \left|\Xi_{h,(m',m)} (w_h, \xi_{m',h})\right| \\
		& \lesssim h^{2\alpha}.
    \end{align*}
    By \Cref{dualdiff}, $|c_{m,h}|$ is uniformly bounded. Substituting the estimate of $|c_m - c_{m,h}|$ into \eqref{projerr} gives
	\al{
		\| u- u_h\|_1 
		& \lesssim \| u - u_I\|_1 + \| u_I - u_h\|_1 \\
		& \lesssim \|u - u_I\|_1 + \|w - w_h\| + \sum_{m=1}^{d_\perp} |c_m - c_{m,h}| \|\xi_m\| + \sum_{m=1}^{d_\perp} |c_{m,h}| \|\xi_m - \xi_{m,h}\| \\
		& \lesssim h^{\min(1, 2\alpha)},
	}
    which gives the estimate \eqref{uleerr} as  $\alpha < \frac12$ when $\ell\in \mathcal M'\cup \mathcal M'',\ \omega > \pi$. 
\end{proof}
\begin{remark}
We note that the error estimate for high-order approximations using $P_k$ polynomials with $k\geq 2$ is not simply 
\[
\|u - u_h\|_1 \lesssim h^{\min\{k,\,2\alpha\}},
\]
as it can also be influenced by additional factors. A detailed investigation of these effects will be addressed in future research.
\end{remark}
\begin{remark}
In \Cref{thmerr}, the predicted convergence rate is \(h^{2\alpha}\) with \(2\alpha < 1\) when the largest interior angle exceeds \(\pi\) and is associated with mixed boundary conditions. However, extensive numerical experiments consistently demonstrate the optimal convergence rate of \(h\). This may be because the component of the solution influenced by the correction \(c_2 \xi_2\) in \(w_\mathcal{S}\) of the modified mixed formulation \eqref{eqn-decModified} is very small, although the correction remains necessary (see \Cref{ex4error} in \Cref{exa.6.1.315}). 
\end{remark}

\section{Neumann boundary conditions}\label{sec-neumann}\label{section-4}

Specifically, we consider the pure Neumann boundary case, that is, $\Gamma_D=\emptyset,\, \Gamma_N=\partial\Omega$. Then the biharmonic problem \eqref{eqn-initial} reduces to
\ali{\label{eqn-initial-neumann}
	\ca{
		\Delta^2u=f&\text{ in }\Omega,\\
		\partial_{\mathbf n}u=\Delta\partial_{\mathbf n} u=0&\text{ on }\partial\Omega.
	}
}
The variational formulation of \eqref{eqn-initial-neumann} is to find $u\in \bar{V}^2$ satisfying 
\ali{\label{eqn-var-neumann}
	a(u,v)=(f,v),\qquad\forall v\in \bar{V}^2,
}
where $f \in L^2(\Omega)$ satisfies the compatibility $\int_\Omega f \,\mathrm dx = 0$, and 
the Sobolev space $\bar V^2$ is defined by
$$\bar V^2=\{v\in H^2(\Omega): \partial_{\mathbf n} v|_{\partial\Omega} = 0, \ \int_\Omega v \,\mathrm dx = 0\}.$$

Similar to the general boundary conditions discussed in \Cref{sec2.1}, the problem \eqref{eqn-initial-neumann} can also be fully decomposed into the naive mixed formulation: two Poisson equations with Neumann boundary conditions
\ali{\label{eqn-dec-neumann}
	\ca{
		-\Delta w=f \ &\text{in }\Omega,\\
		\partial_{\mathbf n} w=0 \ &\text{on }\partial\Omega,
	}\qquad 
	\ca{
		-\Delta \bar u=w \ &\text{in }\Omega,\\
		\partial_{\mathbf n} u=0 \ &\text{on }\partial\Omega.
	}
}
The corresponding variational formulations are to find $\bar u, \ w\in \bar V^1$ such that
\ali{
    \ca{
		A(w,\phi)=(f,\phi), \quad\forall\phi\in \bar V^1,\\
		A(\bar u,\psi)=(w,\psi), \quad\forall\psi\in \bar V^1.
	}
}
Here, the Sobolev space $\bar V^1$ is defined by
$$
    \bar V^1 = \{v\in H^1(\Omega): \int_\Omega v \,\mathrm dx = 0\}.
$$
Note that $w\in \bar V^1$ implies $\int_\Omega w \, \mathrm dx = 0$, which guarantees the well-posedness of the second Poisson problem subject to the Neumann boundary condition.
The naive mixed formulation \eqref{eqn-dec-neumann} suffers from the same issues as the schemes corresponding to other boundary conditions discussed earlier. In the following part, we will introduce the corresponding modified mixed formulation.

\subsection{Modified mixed formulation} We follow the same assumption for the domain as described in \Cref{assum}(i). Specially, it holds $d_\perp=1$. 
The $L^2$ function $\xi$ (for simplicity, we omit the subscript $m$ in this section) is defined by
\ali{\label{def-xizeta-neumann}
    \xi = \chi s + \zeta, \qquad 
	s = r^{-\frac\pi\omega} \cos\frac\pi\omega\theta,\qquad 
	\ca{
		-\Delta\zeta=\Delta (\chi s)\quad&\text{in }\Omega,\\
		\partial_{\mathbf n}\zeta=0\quad&\text{on }\partial\Omega.
	}
}
Here, $\xi$ forms the basis function of the orthogonal space $\mathcal{S}^\perp$. Then, the modified mixed formuation is similarly given by
\ali{\label{eqn-decModified-neumann}
	\ca{
		-\Delta w=f \ &\text{in }\Omega,\\
		\partial_{\mathbf n}w=0 \ &\text{on }\partial\Omega,
	}\qquad 
	\ca{
		-\Delta\tilde u=w_{\mathcal S} \ &\text{in }\Omega,\\
		\partial_{\mathbf n}\tilde u=0 \ &\text{on }\partial\Omega,
	}
}
where 
\begin{equation}\label{wspdef}
    w_{\mathcal S} = w - w_{\mathcal S^\perp}, \qquad w_{\mathcal S^\perp} = \frac{(w, \xi)}{\|\xi\|^2} \xi.
\end{equation}
The corresponding variational formulation is to find $\tilde u, \ w\in \bar V^1$ satisfying
\ali{\label{eqn-decModified-var-neumann}
	\ca{
		A(w, \phi) = (f, \phi), \quad \forall \phi \in \bar V^1,\\
		A(\tilde u, \psi) = (w_{\mathcal S}, \psi), \quad \forall \psi \in \bar V^1.
	}
}
\begin{lemma}
The modified mixed formulation \eqref{eqn-decModified-neumann} or \eqref{eqn-decModified-var-neumann} admits a unique solution $w, \ \tilde{u} \in \bar V^1$.
\end{lemma}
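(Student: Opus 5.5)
The plan is to apply Lax--Milgram to each of the two Neumann problems in \eqref{eqn-decModified-var-neumann} in turn. The space is $\bar V^1$, and on it the form $A(\cdot,\cdot)$ is coercive. The only delicate point is the well-definedness of the right-hand side $w_{\mathcal S}$ of the second problem, and in particular its compatibility with the Neumann condition.

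\emph{Step 1: the first problem.} On $\bar V^1$, the Poincar\'e--Wirtinger inequality $\|v\|\lesssim |v|_1$ gives $A(v,v)=|v|_1^2\gtrsim \|v\|_1^2$. Continuity of $A$ is immediate, and $\phi\mapsto (f,\phi)$ is a bounded functional on $\bar V^1$ since $f\in L^2(\Omega)$. Lax--Milgram then yields a unique $w\in\bar V^1$. Because $\int_\Omega f\,\mathrm dx=0$, the identity $A(w,\phi)=(f,\phi)$ also holds for constant $\phi$. Hence $w$ is the weak solution of the Neumann problem on all of $H^1(\Omega)$, with $\|w\|_1\lesssim\|f\|$ and $\int_\Omega w\,\mathrm dx=0$.

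\emph{Step 2: $w_{\mathcal S}$ is well defined and compatible.} We need $\xi\neq0$, so that $\|\xi\|>0$ in \eqref{wspdef}. This follows as in \Cref{xi perp S}: $\zeta\in H^1(\Omega)$, while $\chi s\notin H^1(\Omega)$ because $s=r^{-\pi/\omega}\cos(\frac\pi\omega\theta)$ and $\pi/\omega>0$. Here $\zeta$ is understood as the zero-mean solution of the Neumann problem in \eqref{def-xizeta-neumann}. That problem is solvable because, by the divergence theorem, $\int_\Omega\Delta(\chi s)\,\mathrm dx=\int_{\partial\Omega}\partial_{\mathbf n}(\chi s)\,\mathrm ds=0$. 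This uses $\partial_{\mathbf n}s=0$ on $\theta=0,\omega$ and $\chi=0$ near $\tilde\Gamma$. Since $\Delta(\chi s)$ vanishes near the origin, the integration can be carried out on $\Omega\setminus K^{\tau R}_\omega$, and no singular term appears.

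Next, the right-hand side must satisfy $\int_\Omega w_{\mathcal S}\,\mathrm dx=0$. Since $\int w=0$, this amounts to showing $\int_\Omega \xi\,\mathrm dx=0$, or else $(w,\xi)=0$. I would prove $\int_\Omega\xi=0$ by testing the Green identity for $\xi$ (as in \Cref{xi perp S}) with $z=\tfrac14 r^2$ localized, or more simply with a function $z\in H^2(\Omega)$ satisfying $\Delta z=1$ and $\partial_{\mathbf n}z=0$. Such a $z$ does not exist, since $\int 1\ne\int_{\partial\Omega}0$. So instead I use that $\mathcal S=\Delta(\bar V^2)$ consists of zero-mean functions, and that in the Neumann setting the orthogonal decomposition is taken inside $L^2_0(\Omega)$. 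Concretely, I would redefine (equivalently) $\xi$ with its mean subtracted, or check directly that $\int_\Omega\xi=0$. To check this, integrate $\Delta\xi=0$ against $1$: this gives only the boundary flux, which is zero. Then, to show the mean of $\xi$ vanishes, I would evaluate $\int\xi$ via Green's identity with $z=\tfrac14|x|^2$ minus a harmonic Neumann corrector, tracking the singular contribution at $Q$ through a small-circle limit $r=\epsilon\to0$.

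If this direct computation is inconclusive, the fallback is to note that $\tilde u$ is determined only through $w_{\mathcal S}$ tested against $\bar V^1$. Any constant added to $w_{\mathcal S}$ is invisible in $(w_{\mathcal S},\psi)$ for $\psi\in\bar V^1$. So $\psi\mapsto(w_{\mathcal S},\psi)$ is a bounded functional on $\bar V^1$ regardless, and Lax--Milgram gives a unique $\tilde u\in\bar V^1$ with $\|\tilde u\|_1\lesssim\|w_{\mathcal S}\|\lesssim\|w\|$, where the last bound uses $\|w_{\mathcal S}\|\le\|w\|$ by Cauchy--Schwarz.

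\emph{Step 3: conclusion.} Uniqueness of the pair follows from the uniqueness in each Lax--Milgram step, since $w_{\mathcal S}$ is a fixed function of $w$. The main obstacle is the compatibility issue in Step 2, namely whether $\int_\Omega\xi\,\mathrm dx=0$. My approach is to settle it via the test-space observation above, which makes existence and uniqueness independent of it. The compatibility then matters only when identifying $\tilde u$ with the strong Neumann solution.
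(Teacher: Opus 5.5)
There is a genuine gap at exactly the point you flag: you never prove $\int_\Omega\xi\,\mathrm dx=0$, and your fallback does not replace it. It is true that \eqref{eqn-decModified-var-neumann} is tested only against $\bar V^1$, so Lax--Milgram gives a unique $\tilde u\in\bar V^1$ for any right-hand side in $L^2(\Omega)$. But that $\tilde u$ satisfies $-\Delta\tilde u=w_{\mathcal S}-|\Omega|^{-1}\int_\Omega w_{\mathcal S}\,\mathrm dx$ with $\partial_{\mathbf n}\tilde u=0$. It therefore solves the strong problem \eqref{eqn-decModified-neumann} only if $\int_\Omega w_{\mathcal S}\,\mathrm dx=0$; otherwise \eqref{eqn-decModified-neumann} has no solution at all. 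The lemma covers \eqref{eqn-decModified-neumann} and treats the two formulations as interchangeable. The subsequent identification $\tilde u=u$ also uses $-\Delta\tilde u=w_{\mathcal S}$ exactly. So compatibility is the real content of the lemma, and your Green's-identity and small-circle attempts leave it open.

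The missing step is a one-line separation of variables, and it is the paper's entire argument. Since $\chi s$ is supported in the sector $K_\omega^R$,
\[
\int_\Omega\chi s\,\mathrm dx=\int_0^R\chi(r)\,r^{1-\frac\pi\omega}\,\mathrm dr\int_0^\omega\cos\Big(\frac\pi\omega\theta\Big)\,\mathrm d\theta=0,
\]
because $\int_0^\omega\cos(\frac\pi\omega\theta)\,\mathrm d\theta=\frac\omega\pi\sin\pi=0$; the radial factor is finite since $\frac\pi\omega<2$. Together with your normalization $\zeta\in\bar V^1$, this gives $\int_\Omega\xi\,\mathrm dx=0$. Hence $\int_\Omega w_{\mathcal S}\,\mathrm dx=0$, and the strong and variational problems coincide.

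The same angular cancellation is what really justifies your solvability argument for $\zeta$. After excising $K_\omega^{\tau R}$, the arc $r=\tau R$ becomes part of the boundary and carries the flux $\frac\pi\omega(\tau R)^{-\pi/\omega}\int_0^\omega\cos(\frac\pi\omega\theta)\,\mathrm d\theta$. That flux vanishes, but it is not absent, as you claim. The paper instead evaluates $\int_\Omega\Delta(\chi s)\,\mathrm dx$ directly in polar coordinates, where the same factor appears. Likewise, subtracting the mean of $\xi$ is ``equivalent'' to the paper's $\xi$ only because $\int_\Omega\chi s\,\mathrm dx=0$, which is precisely the fact you did not check.
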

\begin{proof}
Since $f$ satisfies the compatibility condition, the existence and uniqueness of $w$ are immediate. 
To establish the existence and uniqueness of $\tilde{u}$, it suffices to show that 
$w_{\mathcal S}$ in \eqref{wspdef} satisfies the compatibility condition
\(\int_\Omega w_{\mathcal S}\,\mathrm{d}x = \int_\Omega w dx - \frac{(w, \xi)}{\|\xi\|^2} \int_\Omega \xi dx =  0 \).
Since $w$ satisfies $\int_\Omega w\,\mathrm{d}x = 0$, it suffices to prove that
\(\int_\Omega \xi \,\mathrm{d}x = 0.\)

First, \eqref{def-xizeta-neumann} admits a unique solution $\zeta \in \bar V^1$, which follows from $\Delta(\chi s) \in L^2(\Omega)$ and 
\al{
	\int_\Omega\Delta(\chi s)\,\mathrm dx
	&=\int_\Omega(\partial_r^2+r^{-1}\partial_r+r^{-2}\partial_\theta^2)(\chi r^{-\frac\pi\omega}\cos\frac\pi\omega\theta)\,\mathrm dx\\
	&=\int_0^R r(\partial_r^2+r^{-1}\partial_r-\frac{\pi^2}{\omega^2})(\chi r^{-\frac\pi\omega})\,\mathrm dr\int_0^\omega\cos\frac\pi\omega\theta\,\mathrm d\theta = 0.
}
Then, it follows
\al{
	\int_\Omega \xi\,\mathrm dx
	& = \int_\Omega \zeta + \chi r^{-\frac\pi\omega} \cos \frac\pi\omega \theta \,\mathrm dx= \int_\Omega \zeta \,\mathrm dx + \int_0^R \chi  r^{1-\frac\pi\omega} \,\mathrm dr \int_0^\omega \cos \frac\pi\omega \theta \,\mathrm d\theta = 0.
}
\end{proof}
Similar to the general boundary cases, we have the following results.
\thm{The solution $\tilde{u}$ to the modified mixed formulation \eqref{eqn-decModified-neumann} is identical to the solution $u$ of the biharmonic problem \eqref{eqn-initial-neumann}, i.e., $\tilde u = u\in \bar V^2$.
Moreover, the solution $(w, u)$ to the modified mixed formulation \eqref{eqn-decModified-neumann} also satisfies
    \al{
        \|w\|_1 \lesssim \|f\|, \quad \|u\|_2 \lesssim \|f\|.
    }
}
Similar to the finite element space  $V_h$ in \eqref{Vhspace}, we introduce the $C^0$-Lagrange finite element space
\begin{equation}\label{barVhspace}
	\bar V_h:=\{v\in C^0(\Omega)\cap \bar V^1:v|_K\subset P_1,\; \forall K\in\mathcal T_h\},
\end{equation}
where $P_1$ denotes the space of linear polynomials on each element $K$. Then the $C^0$ finite element algorithm for the biharmonic problem \eqref{eqn-initial-neumann} based on the modified mixed formulation \eqref{eqn-decModified-neumann} can also be given by Algorithm \ref{algorithm-modified} with $V_h$ replaced by $\bar V_h$. 

For $\zeta, \ w$ in \eqref{def-xizeta-neumann} and \eqref{eqn-decModified-neumann}, it follows $w, \zeta \in  H^{1+\alpha}(\Omega)$ with $\alpha = \frac\pi\omega - \varepsilon$. 
\thm{\label{neumannrate}
Let $w_h, \xi_h, u_h$ be the finite element approximations of $C^0$ finite element algorithm for $w, \xi, u$ in the modified mixed formulation \eqref{eqn-decModified-neumann}.  Then it holds  
    \[
    	\|w - w_h\|_1 \lesssim h^\alpha, \hspace{2mm}
        \|w - w_h\| \lesssim h^{2\alpha}, \hspace{2mm}
    	\|\xi - \xi_h\|_1 \lesssim h^\alpha, \hspace{2mm}
        \|\xi - \xi_h\| \lesssim h^{2\alpha}, \hspace{2mm}
        \|u - u_h\|_1\lesssim h.
    \]
}

\section{Numerical Tests}\label{section-5}
In this section, we present numerical experiments on several model problems to verify the effectiveness of the proposed $C^0$ finite element algorithm, namely Algorithm \ref{algorithm-modified}. 
{\bf \noindent Domain shape}
For convenience of presentation, we consider the domains listed in \Cref{domains}, which are formed by removing parts of a square domain centered at the origin $Q$ with side length $4$.
\begin{figure}[H]
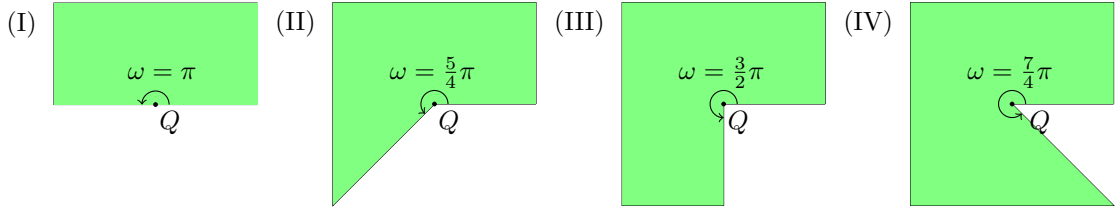

	\centering
	\pict{[scale=0.9]
		\draw[black] (1.5,0) -- (1.5,1.5) -- (-1.5,1.5) -- (-1.5,0) -- (1.5,0);
		\fill[green!50!] (1.5,0) -- (1.5,1.5) -- (-1.5,1.5) -- (-1.5,0) -- (1.5,0);
		\fill (0,0) circle (1pt);
		\node[above=-7pt,right=-2pt] at (0,0) {$Q$};
		\draw[->] (0.2,0) arc (0:180:0.2);
		\node[above=12pt,left=-19pt] at (0,0) {$\omega=\pi$};
		\node[above=-8pt,left=3pt] at (-1.5,1.5) {(I)};
		\fill [white!0!] (0,-1.5) -- (-.15,-1.5);
	}
	\pict{[scale=0.9]
		\draw[black] (1.5,0) -- (1.5,1.5) -- (-1.5,1.5) -- (-1.5,-1.5) -- (0,0) -- (1.5,0);
		\fill[green!50!] (1.5,0) -- (1.5,1.5) -- (-1.5,1.5) -- (-1.5,-1.5) -- (0,0) -- (1.5,0);
		\fill (0,0) circle (1pt);
		\node[above=-7pt,right=-2pt] at (0,0) {$Q$};
		\draw[->] (0.2,0) arc (0:225:0.2);
		\node[above=12pt,left=-19pt] at (0,0) {$\omega=\frac54\pi$};
		\node[above=-8pt,left=3pt] at (-1.5,1.5) {(II)};
	}
	\pict{[scale=0.9]
		\draw[black] (1.5,0) -- (1.5,1.5) -- (-1.5,1.5) -- (-1.5,-1.5) -- (0,-1.5) -- (0,0) -- (1.5,0);
		\fill[green!50!] (1.5,0) -- (1.5,1.5) -- (-1.5,1.5) -- (-1.5,-1.5) -- (0,-1.5) -- (0,0) -- (1.5,0);
		\fill (0,0) circle (1pt);
		\node[above=-7pt,right=-2pt] at (0,0) {$Q$};
		\draw[->] (0.2,0) arc (0:270:0.2);
		\node[above=12pt,left=-19pt] at (0,0) {$\omega=\frac32\pi$};
		\node[above=-8pt,left=3pt] at (-1.5,1.5) {(III)};
	}
	\pict{[scale=0.9]
		\draw[black] (1.5,0) -- (1.5,1.5) -- (-1.5,1.5) -- (-1.5,-1.5) -- (1.5,-1.5) -- (0,0) -- (1.5,0);
		\fill[green!50!] (1.5,0) -- (1.5,1.5) -- (-1.5,1.5) -- (-1.5,-1.5) -- (1.5,-1.5) -- (0,0) -- (1.5,0);
		\fill (0,0) circle (1pt);
		\node[above=-7pt,right=3pt] at (0,0) {$Q$};
		\draw[->] (0.2,0) arc (0:315:0.2);
		\node[above=12pt,left=-19pt] at (0,0) {$\omega=\frac74\pi$};
		\node[above=-8pt,left=3pt] at (-1.5,1.5) {(IV)};
	}
	\caption{Four different polygonal domains used in the numerical experiments. The re-entrant corner at point $Q$ has interior angle $\omega$. 
		I: $\omega=\pi$, II: $\omega=5\pi/4$, III: $\omega=3\pi/2$, IV: $\omega=7\pi/4$. }
	\label{domains}
\end{figure}

{\bf \noindent Boundary conditions}
Boundary conditions are categorized into the following five types:
\al{
	B_1:\, \partial\Omega = \bar \Gamma_D; \quad 
	B_2:\, \tilde \Gamma \subset \bar \Gamma_D, \ \ell\in \mathcal N^2; \quad 
	B_3:\, \tilde \Gamma \subset \bar \Gamma_D, \ \ell\in\mathcal M'; \quad 
	B_4:\, \tilde \Gamma \subset \bar \Gamma_D, \ \ell\in\mathcal M''; \quad 
	B_5:\, \partial\Omega = \bar \Gamma_N,
}
where $\tilde \Gamma$ is defined in \eqref{set-boundary}.

{\bf \noindent Cut-off function} Following \cite{li2023ac}, we consider the cut-off function
\al{
	\chi(r;\tau,R)=
	\ca{
		0,\quad r>R,\\
		1,\quad r<\tau R,\\
		-\frac3{16}\sbr{\frac{2r}{R(1-\tau)}-\frac{1+\tau}{1-\tau}}^5 + \frac58\sbr{\frac{2r}{R(1-\tau)}-\frac{1+\tau}{1-\tau}}^3 - \frac{15}{16}\sbr{\frac{2r}{R(1-\tau)}-\frac{1+\tau}{1-\tau}}+\frac12.
	}
}
For domains in \Cref{domains}, we take $R=1.8, \ \tau=0.125$.

Since analytical solutions to these biharmonic problems are unavailable, we employ the $C^0$ interior penalty discontinuous Galerkin ($C^0$-IPDG) method \cite{brenner2011c, cao2025posteriori}, which has been shown to produce numerical solutions that converge to the true solution independently of the geometry of $\Omega$ and the imposed boundary conditions.
In our computations, we use $P_2$ Lagrange elements, and the penalty parameter is chosen according to the specific problem under consideration.
After $7$ uniform mesh refinements, the resulting numerical solution, denoted by $u_R$, is taken as the reference solution.

In addition, we denote by $u_h^N$ the finite element solution based on the naive mixed formulation and by $u_h^M$ the finite element solution from Algorithm \ref{algorithm-modified}. To compute the convergence rate, we employ the Cauchy numerical convergence rate $\mathcal R(j)$ for a generic finite element solution $v_h$, defined as
\al{
	\mathcal R (j) = \log_2 \frac{|v_j-v_{j-1}|_1}{|v_{j+1}-v_j|_1}, \quad j \geq 1.
}
Here, $v_j$ denotes the finite element solution $v_h$ on the mesh $\mathcal T_h$ obtained after $j$ uniform refinements of the initial triangulation. 


\subsection{Mixed boundary conditions with \texorpdfstring{$\Gamma_D\neq\emptyset$}{}}

For all numerical tests in this subsection, unless otherwise specified, we set $f\equiv 1$ in $\Omega$.

%
%
%
%
%
%
%
%

\exa{\label{exa.6.1.180}
	Consider domain I, $\Omega=[-2,2]\times [0,2]$, with boundary types $B_3$ and $B_4$. 
	The corresponding triangulation and mesh refinements are shown in \Cref{ex1domain}.
	\fig{[h]
		\centering
		\subfloat [Initial mesh] {
			\includegraphics[width=1.1 in]{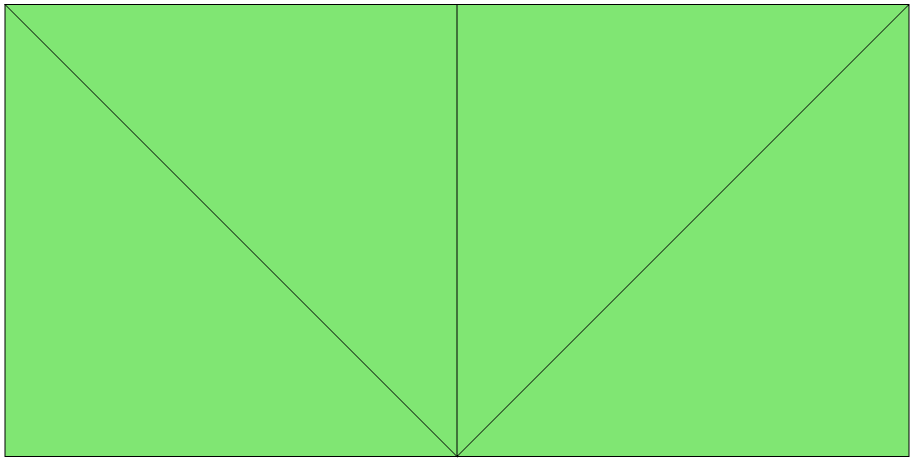}
		}\hspace{4em}
		\subfloat [$1$ refinement] {
			\includegraphics[width=1.1 in]{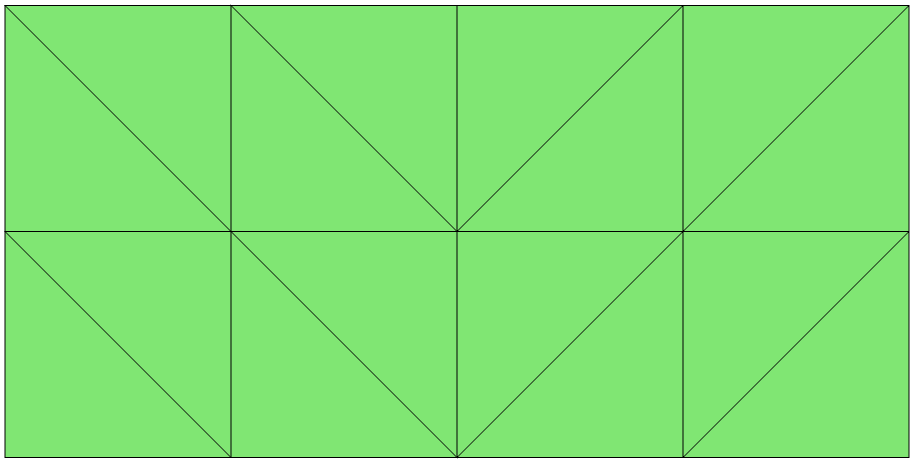}
		}
		\caption{\Cref{exa.6.1.180}: Initial mesh and mesh after $1$ refinement.}\label{ex1domain}
	}
	
	Consider the re-entrant corner $Q=Q_\ell$ with $\ell \in \mathcal M'\cup \mathcal M''$, having an interior angle $\omega = \pi$, located at the intersection of edges with Neumann and Dirichlet boundary conditions, respectively.
	In this case, $d_\perp = 1$. We compute the finite element solution $u_h^N$ and $u_h^M$, with the reference solution obtained using the $C^0$-IPDG method with penalty parameter $\sigma=50$. 
	The numerical solutions $u_h^N$ and $u_h^M$, computed on a mesh after eight uniform refinements, along with their differences from the reference solution, are presented in \Cref{ex1B34}. 
	The $L^\infty$ norm of the differences $|u_R-u_h^N|$, $|u_R-u_h^M|$ after $3$ to $6$ refinements are presented in \Cref{tab.180.err}.
	From these results, we observe that the naive mixed finite element solution $u_h^N$ converges to an incorrect solution, whereas the solution obtained from Algorithm~\ref{algorithm-modified} converges to the true solution. 
	\fig{[h]
		\centering
		\subfloat [$B_3, u_h^N$] {\label{fig.180.13.1.u.nai}
			\includegraphics[width=1.45 in]{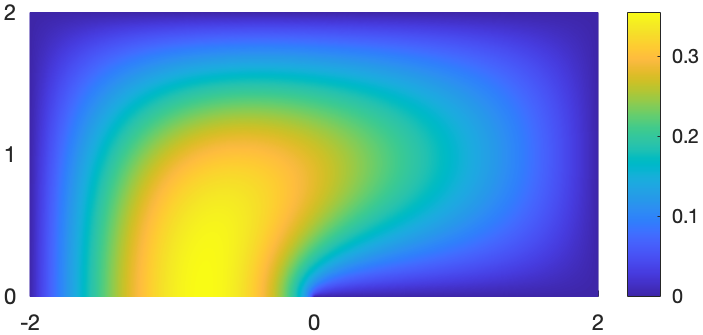}
		}
		\subfloat [$B_3, |u_R-u_h^N|$] {\label{fig.180.13.1.diff.nai}
			\includegraphics[width=1.45 in]{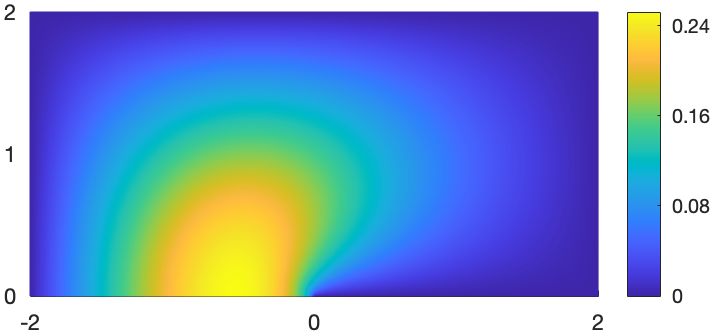}
		}
		\subfloat [$B_3, u_h^M$] {\label{fig.180.13.1.u.mod}
			\includegraphics[width=1.45 in]{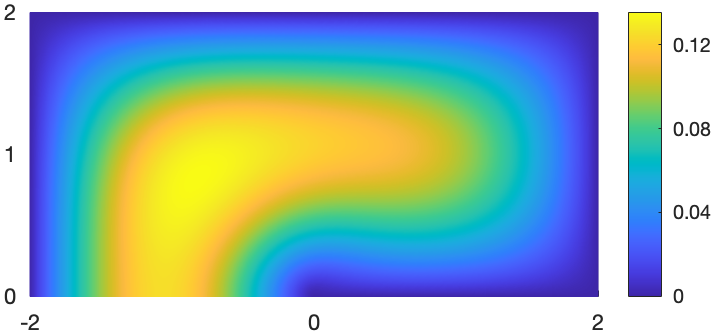}
		}
		\subfloat [$B_3, |u_R-u_h^M|$] {\label{fig.180.13.1.diff.mod}
			\includegraphics[width=1.45 in]{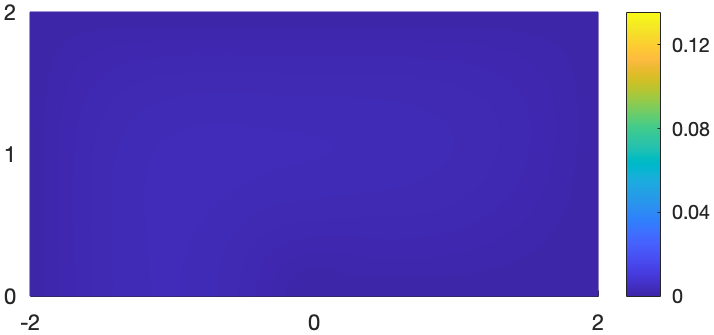}
		}\\ \vspace{0.2cm}
		\subfloat [$B_4, u_h^N$] {\label{fig.180.14.1.u.nai}
			\includegraphics[width=1.45 in]{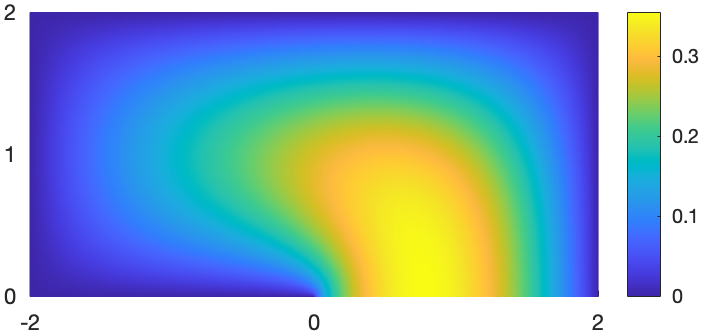}
		}
		\subfloat [$B_4, |u_R-u_h^N|$] {\label{fig.180.14.1.diff.nai}
			\includegraphics[width=1.45 in]{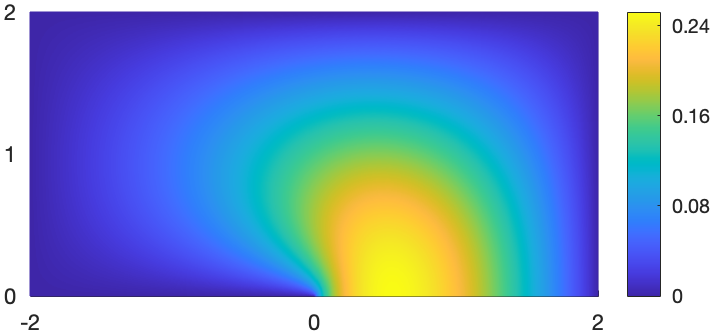}
		}
		\subfloat [$B_4, u_h^M$] {\label{fig.180.14.1.u.mod}
			\includegraphics[width=1.45 in]{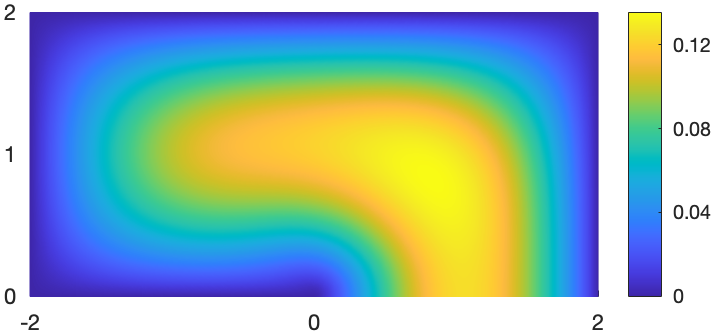}
		}
		\subfloat [$B_4, |u_R-u_h^M|$] {\label{fig.180.14.1.diff.mod}
			\includegraphics[width=1.45 in]{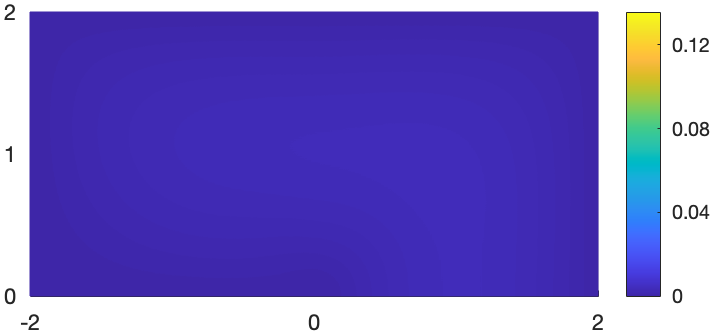}
		}
		\caption{\Cref{exa.6.1.180}: Domain I; Boundary type: $B_3, B_4$; $u_h^N, u_h^M$ and their differences with $u_R$.}\label{ex1B34}
		\label{fig.exa.180}
	}
	
	
	\begin{table}[h]
		\centering
		\caption{\Cref{exa.6.1.180}: Domain I, $L^\infty$ errors.}
		\label{tab.180.err}
		\begin{tabular}{c|c|cccc} 
			\hline
			& Boundary type & 3 & 4 & 5 & 6 \\ 
			\hline
			\multirow{2}{*}{$\|u_R-u_h^N\|_\infty$} 
			& $B_3$ & 2.1000e-01 & 2.3616e-01 & 2.4758e-01 & 2.5287e-01 \\
			& $B_4$ & 2.0998e-01 & 2.3614e-01 & 2.4756e-01 & 2.5285e-01 \\ 
			\hline
			\multirow{2}{*}{$\|u_R-u_h^M\|_\infty$} 
			& $B_3$ & 7.7457e-03 & 4.0231e-03 & 1.8178e-03 & 1.2360e-03 \\
			& $B_4$ & 7.7340e-03 & 4.0151e-03 & 1.8098e-03 & 1.2210e-03 \\
			\hline
		\end{tabular}
	\end{table}
	
	The convergence rates of $w_h$ and $u_h^M$ are shown in \Cref{tab.180.rate}. From these results, we observe that $u_h^M$ exhibits a convergence rate of order $h^1$, whereas the convergence rate of $w_h$ approaches $h^\alpha$ with $\alpha = 0.5$ under boundary types $B_3$ and $B_4$, which are consistent with the result in \Cref{PoissonError} and \Cref{thmerr}.

	\begin{table}[H]
		\centering
		\caption{\Cref{exa.6.1.180}: Domain I, convergence rates of $w_h,u_h^M$.}
		\label{tab.180.rate}
		\begin{tabular}{c|c|llllll}
			\hline
			& Boundary type & j=3 & j=4 & j=5 & j=6 & j=7 & j=8 \\ \hline
			\multirow{2}{*}{$\mathcal R$ for $u_h^M$ } 
			& $B_3$ & 0.82  & 0.91  & 0.97  & 0.99  & 0.99  & 1.00  \\ 
			& $B_4$ & 0.82  & 0.91  & 0.97  & 0.99  & 0.99  & 1.00  \\ \hline
			\multirow{2}{*}{$\mathcal R$ for $w_h$ } 
			& $B_3$ & 0.77  & 0.76  & 0.70  & 0.63  & 0.57  & 0.54  \\ 
			& $B_4$ & 0.77  & 0.76  & 0.70  & 0.63  & 0.57  & 0.54  \\ 
			\hline
		\end{tabular}
	\end{table}
	
}
%
%
%
%
%
%
%
%
\exa{\label{ex6.1.225}
	We repeat \Cref{exa.6.1.180} in domain II, $\Omega = [-2,2]^2\backslash\{(x,y)|y<0,y<x\}$, with boundary types $B_3$ and $B_4$. 
	The initial mesh and the mesh after one refinement are shown in \Cref{ex2domain}.
	\fig{[h]
		\centering
		\subfloat [Initial mesh] {
			\includegraphics[width=1.1 in]{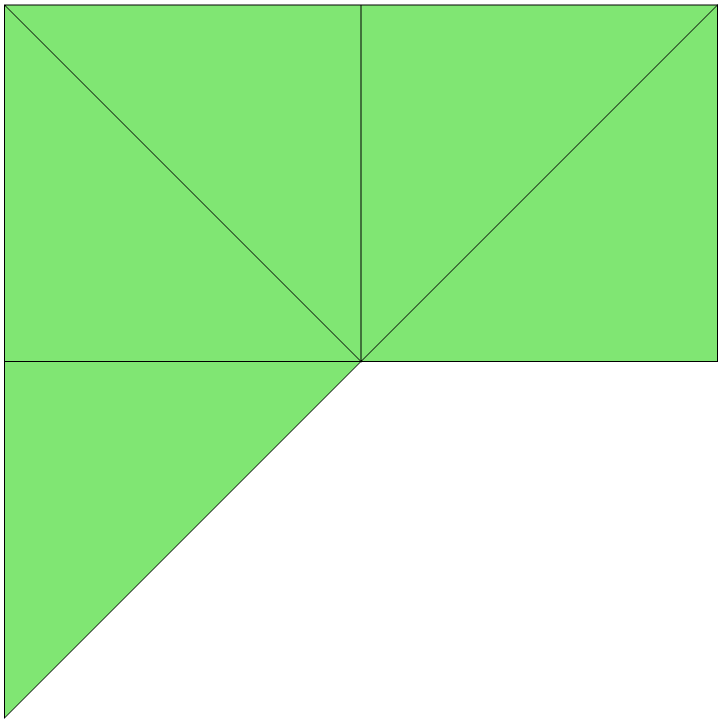}
		}\hspace{4em}
		\subfloat [$1$ refinement] {
			\includegraphics[width=1.1 in]{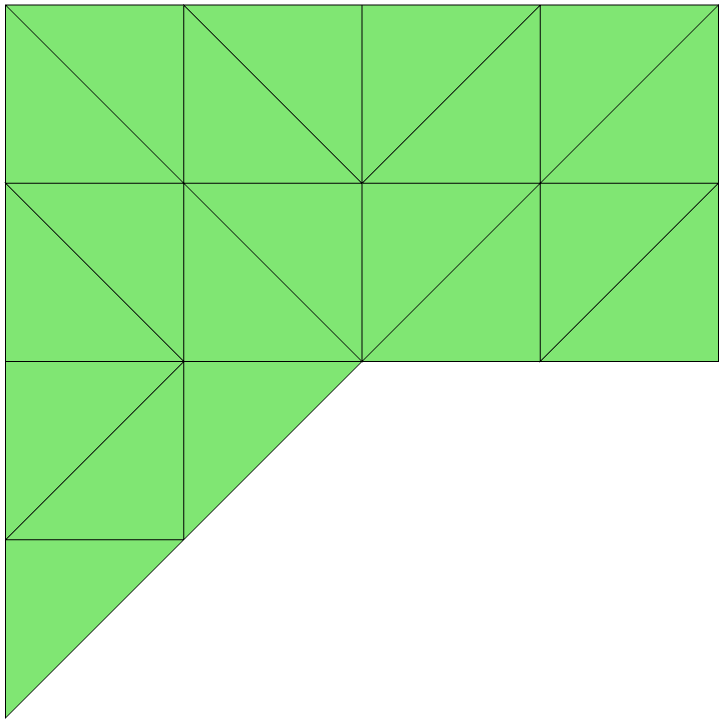}
		}
		\caption{\Cref{ex6.1.225}: Initial mesh and mesh after $1$ refinement.}\label{ex2domain}
	}
	
	In this case, the largest interior angle $\omega=\frac54\pi$, and $d_\perp = 1$. 
	The numerical solutions $u_h^N$ and $u_h^M$, computed on a mesh after eight uniform refinements, along with their differences from the reference solution, are presented in \Cref{fig.exa.225}. The $L^\infty$ norm of the differences $|u_R-u_h^N|$, $|u_R-u_h^M|$ after $3$ to $6$ refinements are presented in \Cref{ex2linf}.
	From these results, we further observe that the naive mixed finite element solution $u_h^N$ converges to an incorrect solution, whereas the solution obtained from Algorithm~\ref{algorithm-modified} converges to the true solution. 
	
	\fig{[h]
		\centering
		\subfloat [$B_3, u_h^N$] {\label{fig.225.13.1.u.nai}
			\includegraphics[width=1.45 in]{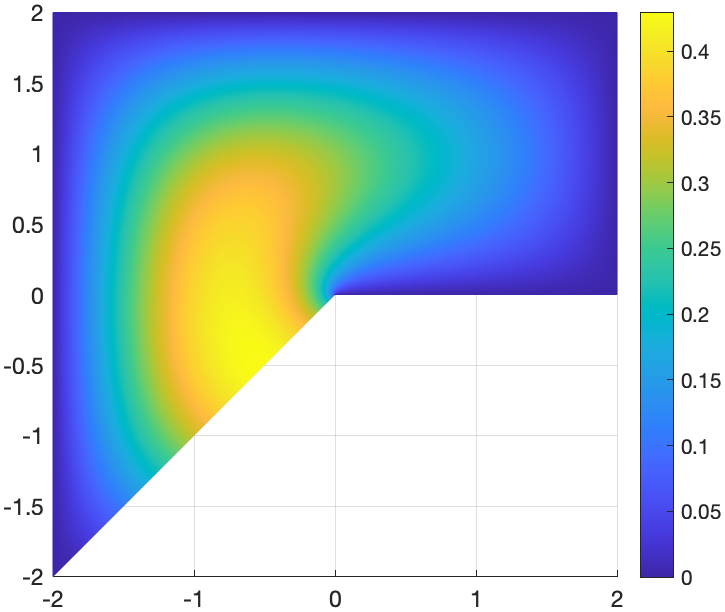}
		}
		\subfloat [$B_3, |u_R-u_h^N|$] {\label{fig.225.13.1.diff.nai}
			\includegraphics[width=1.45 in]{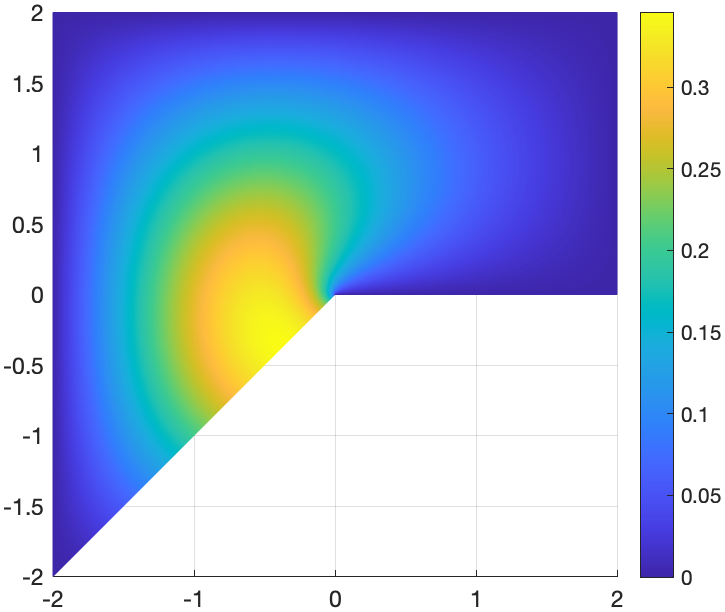}
		}
		\subfloat [$B_3, u_h^M$] {\label{fig.225.13.1.u.mod}
			\includegraphics[width=1.45 in]{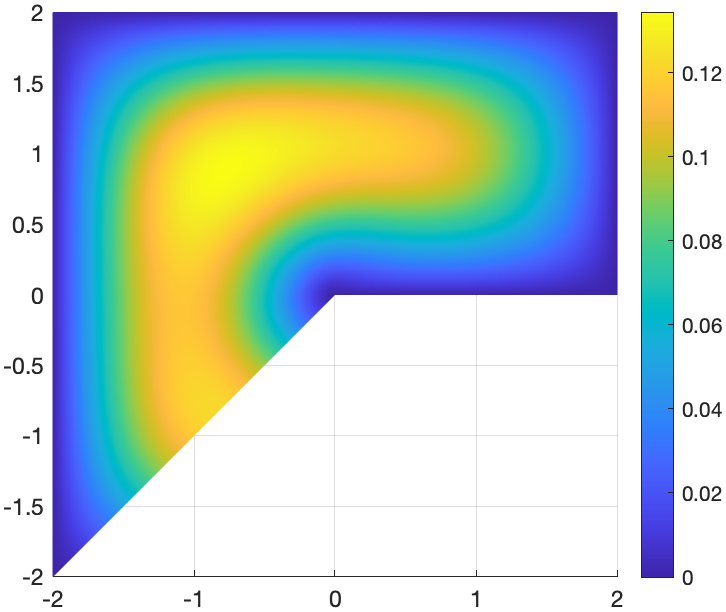}
		}
		\subfloat [$B_3, |u_R-u_h^M|$] {\label{fig.225.13.1.diff.mod}
			\includegraphics[width=1.45 in]{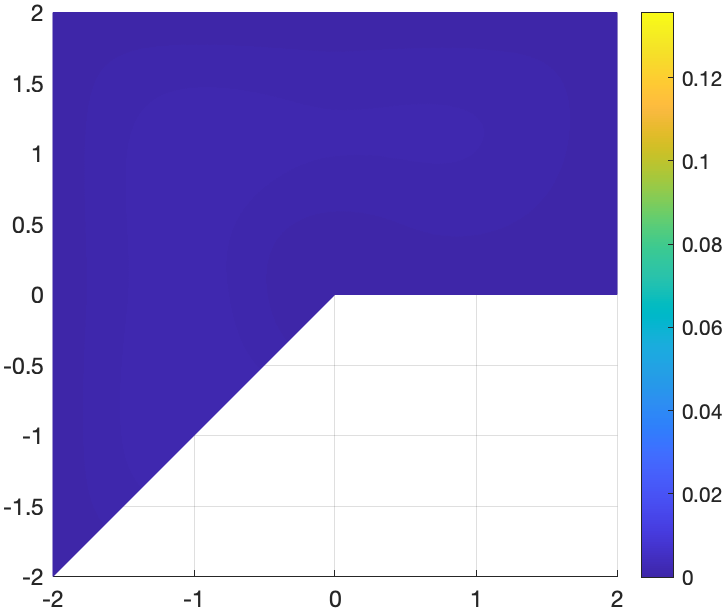}
		}\\ \vspace{0.2cm}
		\subfloat [$B_4, u_h^N$] {\label{fig.225.14.1.u.nai}
			\includegraphics[width=1.45 in]{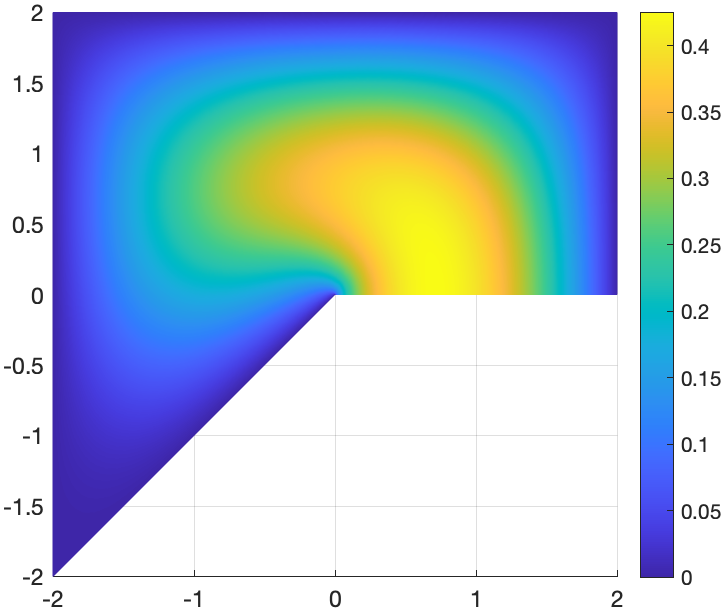}
		}
		\subfloat [$B_4, |u_R-u_h^N|$] {\label{fig.225.14.1.diff.nai}
			\includegraphics[width=1.45 in]{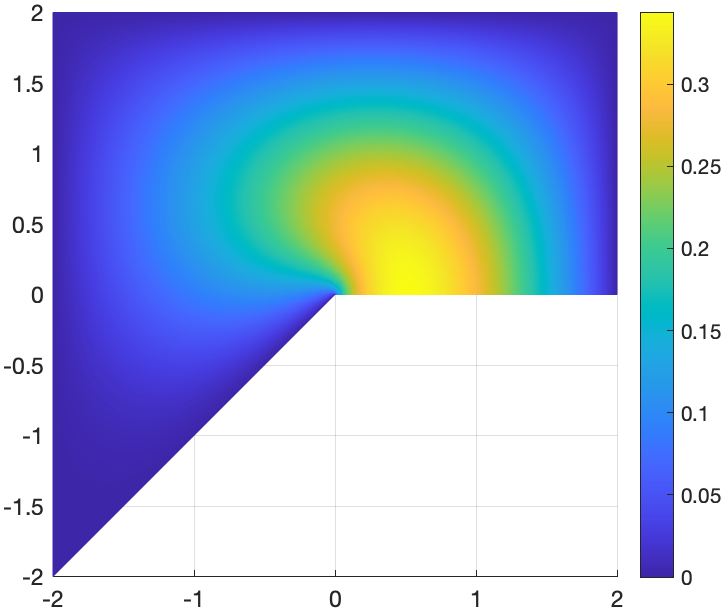}
		}
		\subfloat [$B_4, u_h^M$] {\label{fig.225.14.1.u.mod}
			\includegraphics[width=1.45 in]{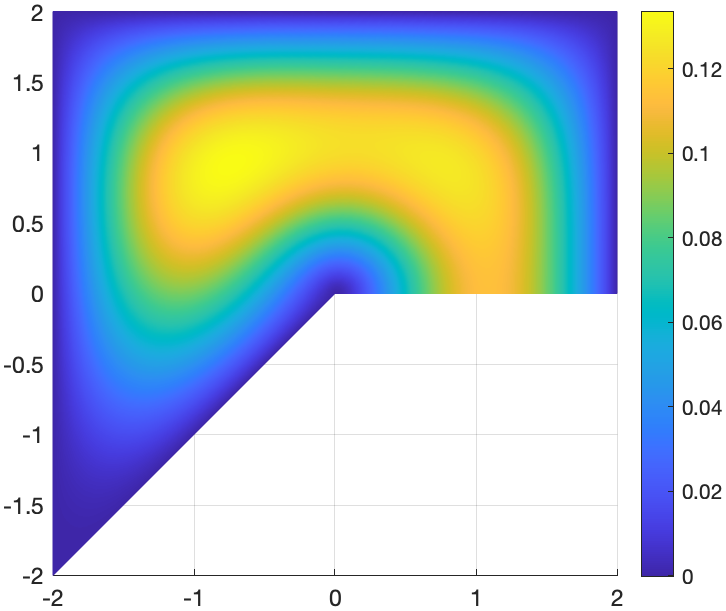}
		}
		\subfloat [$B_4, |u_R-u_h^M|$] {\label{fig.225.14.1.diff.mod}
			\includegraphics[width=1.45 in]{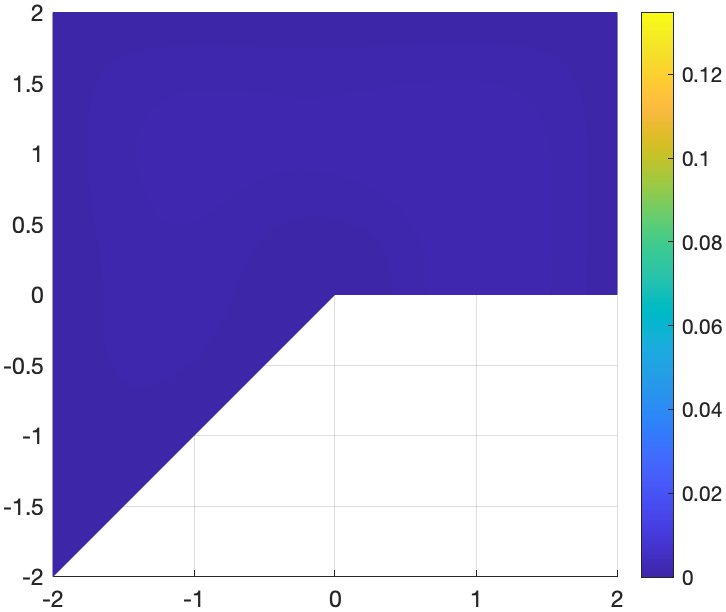}
		}
		\caption{\Cref{ex6.1.225}: Domain II; Boundary type: $B_3, B_4$; $u_h^N, u_h^M$ and their differences with $u_R$.}
		\label{fig.exa.225}
	}
	
	
	
	\begin{table}[h]
		\centering
		\caption{\Cref{ex6.1.225}: Domain II, $L^\infty$ errors.}\label{ex2linf}
		\begin{tabular}{c|c|cccc} 
			\hline
			& Boundary type & 3 & 4 & 5 & 6 \\ 
			\hline
			\multirow{2}{*}{$\|u_R-u_h^N\|_\infty$} 
			& $B_3$ & 2.6724e-01 & 3.0462e-01 & 3.2480e-01 & 3.3618e-01 \\
			& $B_4$ & 2.6306e-01 & 3.0116e-01 & 3.2253e-01 & 3.3407e-01 \\ 
			\hline
			\multirow{2}{*}{$\|u_R-u_h^M\|_\infty$} 
			& $B_3$ & 8.4341e-03 & 3.9003e-03 & 1.6030e-03 & 1.0728e-03 \\
			& $B_4$ & 8.1807e-03 & 3.8357e-03 & 1.5541e-03 & 1.0044e-03 \\
			\hline
		\end{tabular}
	\end{table}
	
	The convergence rates of $w_h$ and $u_h^M$ are shown in \Cref{d2rate}. From these results, we observe that $u_h^M$ exhibits a convergence rate of order $h^1$, whereas the convergence rate of $w_h$ approaches $h^\alpha$ with $\alpha = 0.4$ under boundary types $B_3$ and $B_4$, which are consistent with the result in \Cref{PoissonError} and \Cref{thmerr}.
	
	\begin{table}[h]
		\centering
		\caption{\Cref{ex6.1.225}: Domain II, convergence rates of $w_h,u_h^M$.}\label{d2rate}
		\begin{tabular}{c|c|llllll}
			\hline
			& Boundary type & j=3 & j=4 & j=5 & j=6 & j=7 & j=8 \\ \hline
			\multirow{2}{*}{$\mathcal R$ for $u_h^M$ }  & $B_3$ & 0.83  & 0.91  & 0.97  & 0.99  & 0.99  & 1.00  \\ 
			& $B_4$ & 0.83  & 0.91  & 0.97  & 0.99  & 0.99  & 1.00  \\ \hline
			\multirow{2}{*}{$\mathcal R$ for $w_h$ } & $B_3$ & 0.72  & 0.67  & 0.58  & 0.50  & 0.45  & 0.42  \\ 
			& $B_4$ & 0.73  & 0.66  & 0.57  & 0.49  & 0.45  & 0.42  \\ 
			\hline
		\end{tabular}
	\end{table}
	
}

%
%
%
%
%
%
%
%
\exa{\label{exa.6.1.270}
	Consider the $L$-shaped domain III, $\Omega=[-2,2]^2\backslash\{(x,y)|x>0,y<0\}$; the initial mesh and the mesh after $1$ refinement are  shown in \Cref{ex3mesh}.
	This domain contains an interior angle of $\frac32\pi$; we examine boundary types $B_1, B_2, B_3$, and $B_4$, with $d_\perp = 1$ for all  cases. 
	\fig{[hbt!]
		\centering
		\subfloat [Initial mesh] {
			\includegraphics[width=1.1 in]{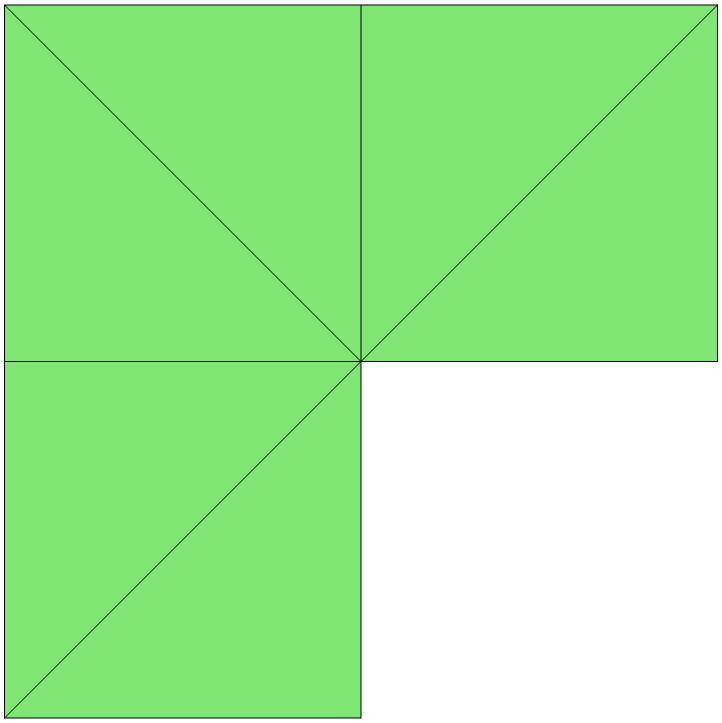}
		}\hspace{4em}
		\subfloat [$1$ refinement] {
			\includegraphics[width=1.1 in]{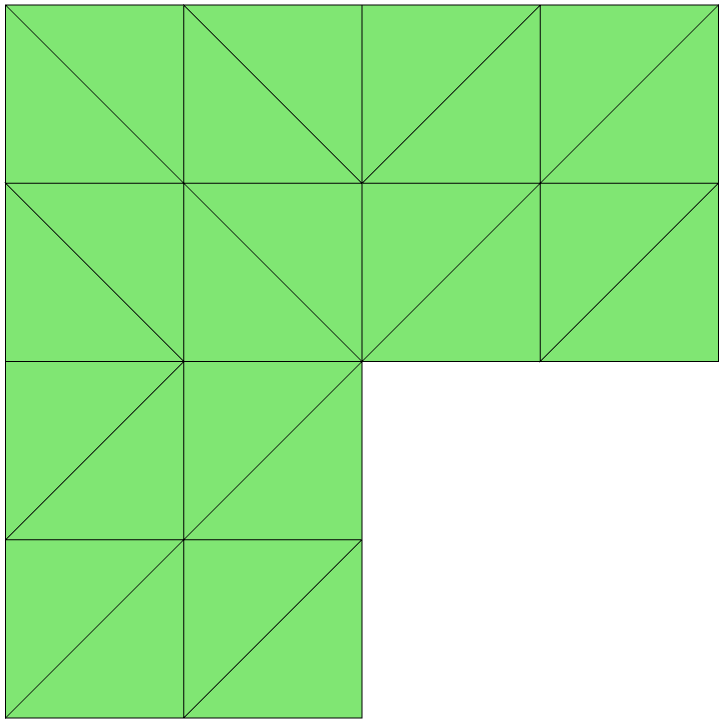}
		}
		\caption{\Cref{exa.6.1.270}: Initial mesh and mesh after $1$ refinement.}\label{ex3mesh}
	}
	For these four cases, we compute the finite element solution $u_h^N$ and $u_h^M$, with the reference solution $u_R$ obtained using the $C^0$-IPDG method with penalty parameter $\sigma=32,160,50,50$, respectively. 
	The numerical solutions $u_h^N$ and $u_h^M$, computed on a mesh after eight uniform refinements, along with their differences from the reference solution, are presented in \Cref{fig.exa.270.1} for boundary types $B_1$ and $B_2$, and \Cref{fig.exa.270.2} or boundary types $B_3$ and $B_4$.
	The $L^\infty$ norm of $|u_R-u_h^N|$, $|u_R-u_h^M|$ after 3 to 6 times refinements of the mesh are presented in \Cref{ex3err}. 
	From these results, we also observe that the naive mixed finite element solution $u_h^N$ converges to an incorrect solution in all cases except when the boundary type is $B_2$, whereas the solution obtained from Algorithm~\ref{algorithm-modified} converges to the true solution. 
	\fig{[h]
		\centering
		\subfloat [$B_1, u_h^N$] {\label{fig.270.11.1.u.nai}
			\includegraphics[width=1.45 in]{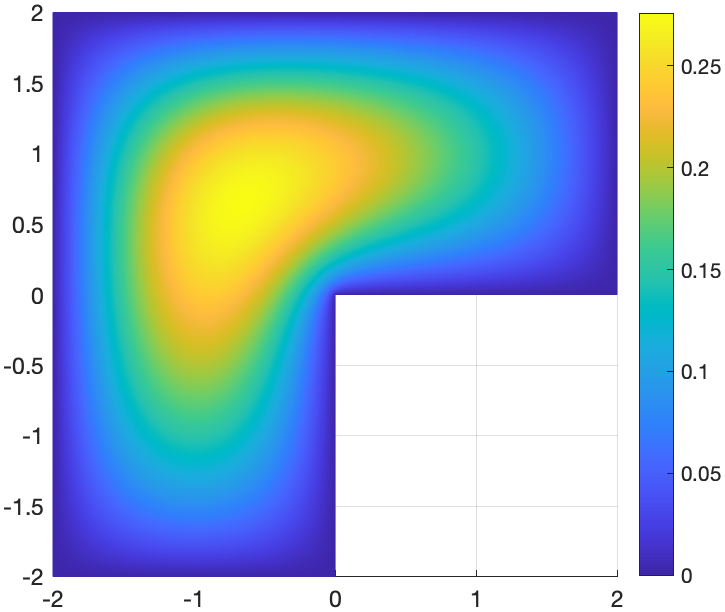}
		}
		\subfloat [$B_1, |u_R-u_h^N|$] {\label{fig.270.11.1.diff.nai}
			\includegraphics[width=1.45 in]{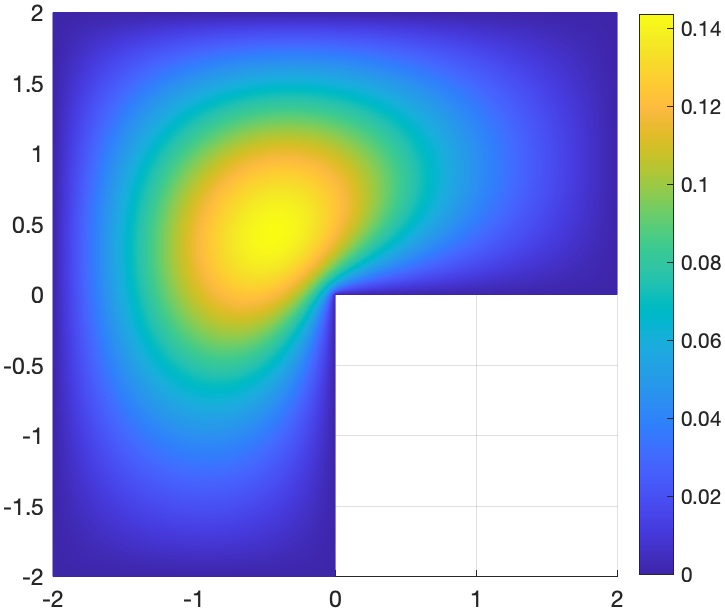}
		}
		\subfloat [$B_1, u_h^M$] {\label{fig.270.11.1.u.mod}
			\includegraphics[width=1.45 in]{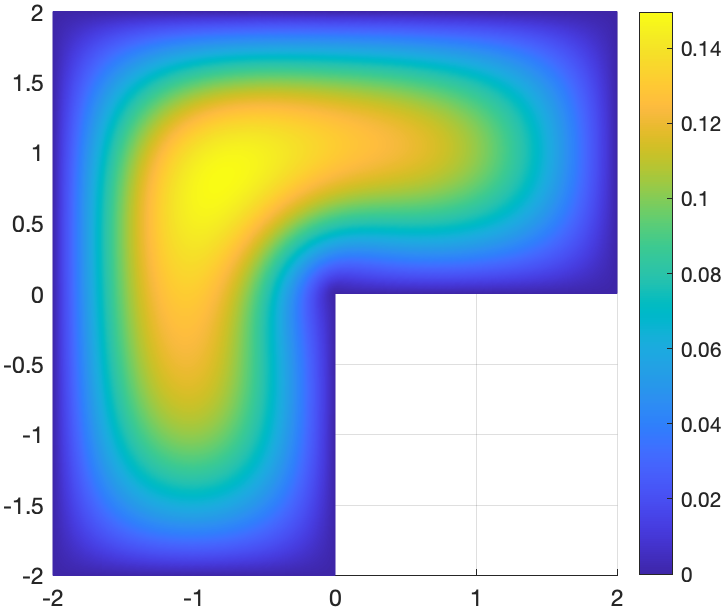}
		}
		\subfloat [$B_1, |u_R-u_h^M|$] {\label{fig.270.11.1.diff.mod}
			\includegraphics[width=1.45 in]{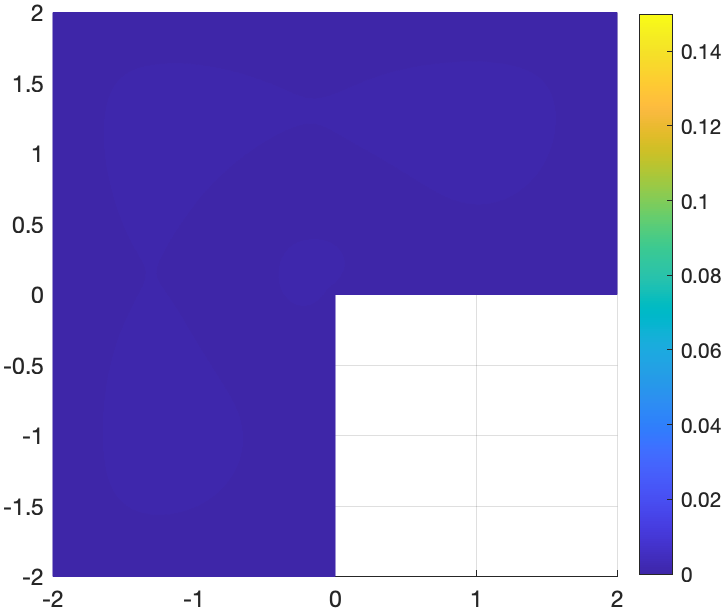}
		}\\ \vspace{0.1cm}
		\subfloat [$B_2, u_h^N$] {\label{fig.270.12.1.u.nai}
			\includegraphics[width=1.45 in]{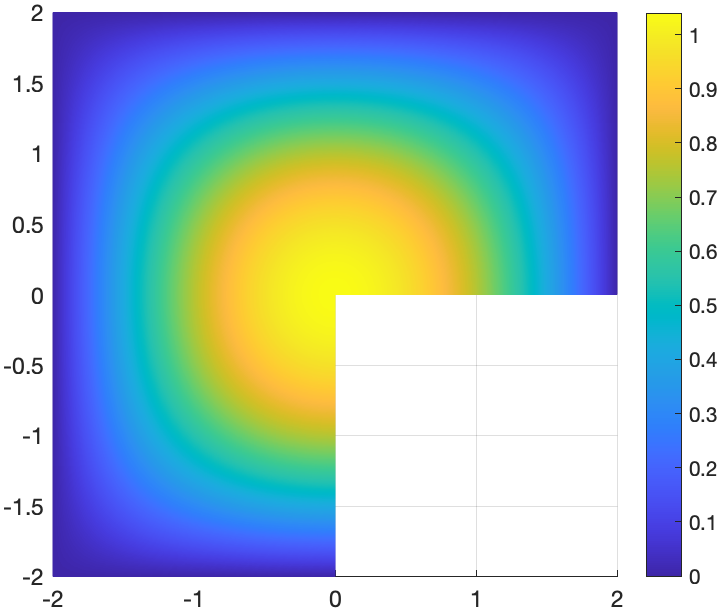}
		}
		\subfloat [$B_2, |u_R-u_h^N|$] {\label{fig.270.12.1.diff.nai}
			\includegraphics[width=1.43 in]{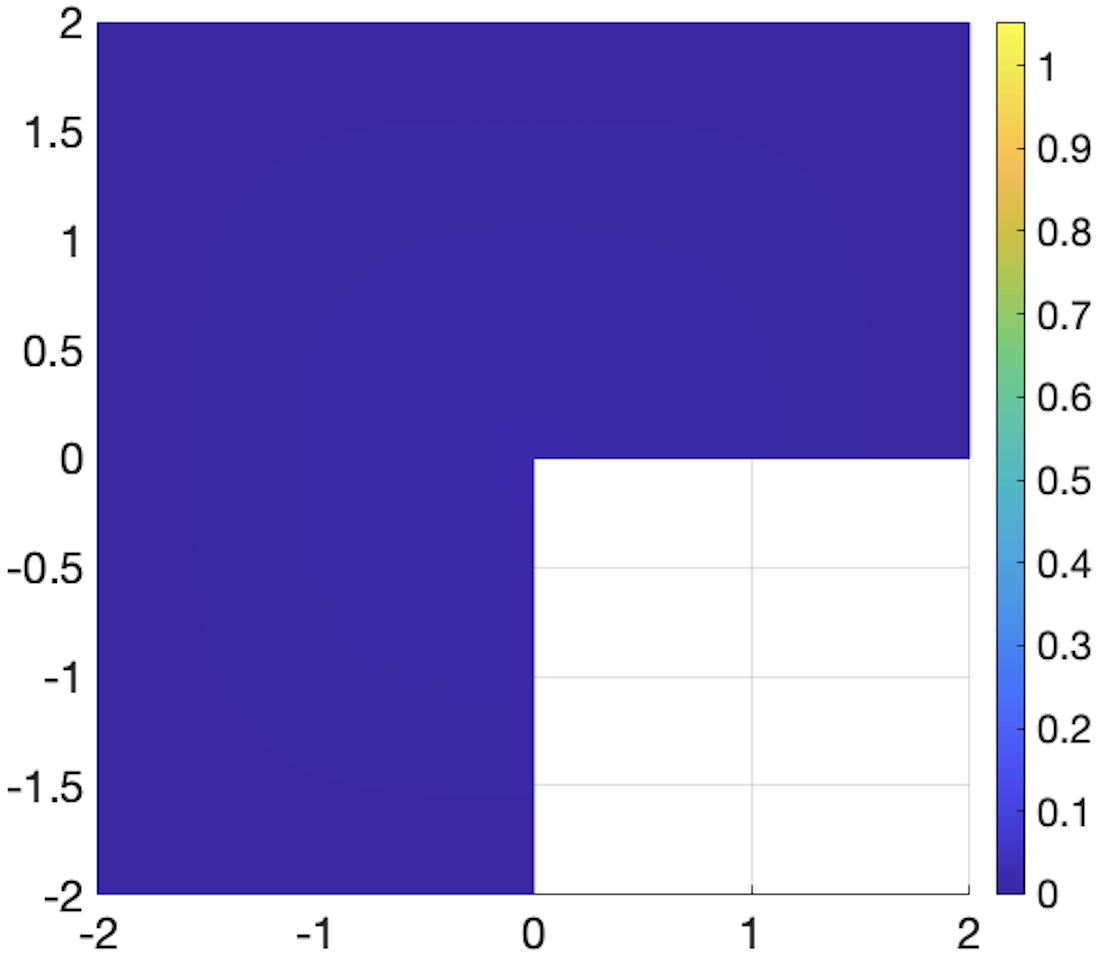}
		}
		\subfloat [$B_2, u_h^M$] {\label{fig.270.12.1.u.mod}
			\includegraphics[width=1.45 in]{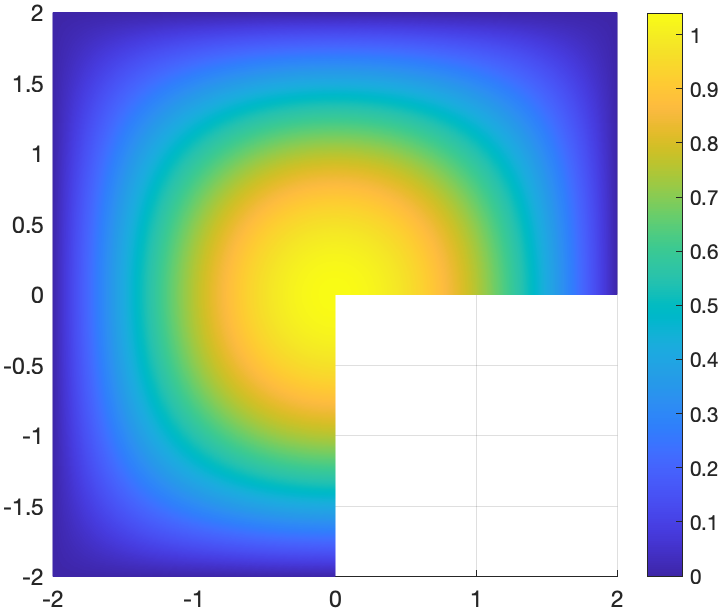}
		}
		\subfloat [$B_2, |u_R-u_h^M|$] {\label{fig.270.12.1.diff.mod}
			\includegraphics[width=1.45 in]{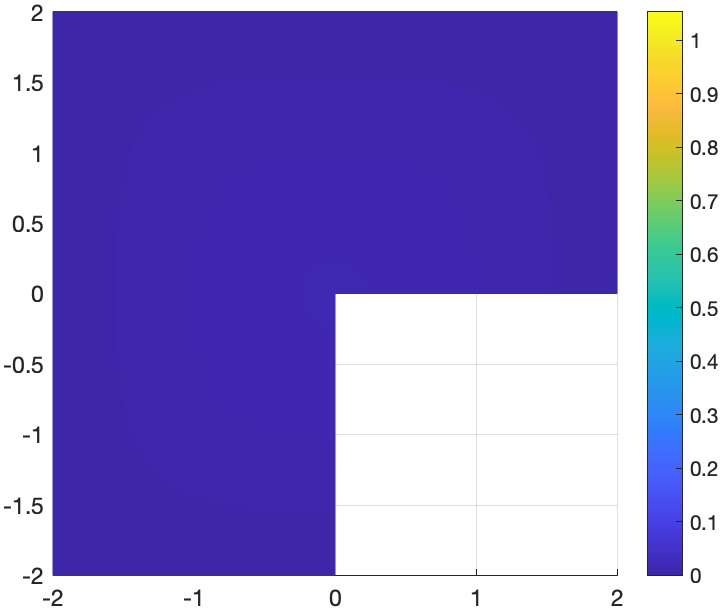}
		}
		\caption{\Cref{exa.6.1.270}: Domain III; Boundary type: $B_1, B_2$; $u_h^N, u_h^M$ and their differences with $u_R$.}
		\label{fig.exa.270.1}
	}
	
	\fig{[hbt!]
		\centering
		\subfloat [$B_3, u_h^N$] {\label{fig.270.13.1.u.nai}
			\includegraphics[width=1.45 in]{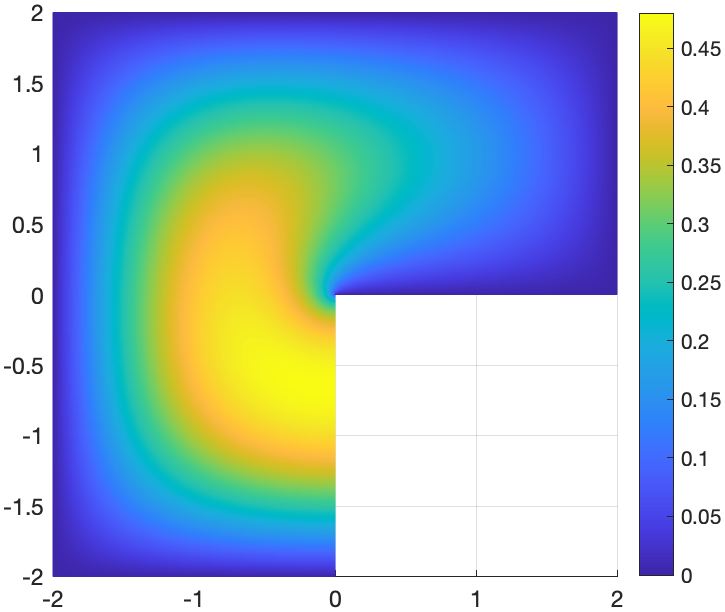}
		}
		\subfloat [$B_3, |u_R-u_h^N|$] {\label{fig.270.13.1.diff.nai}
			\includegraphics[width=1.45 in]{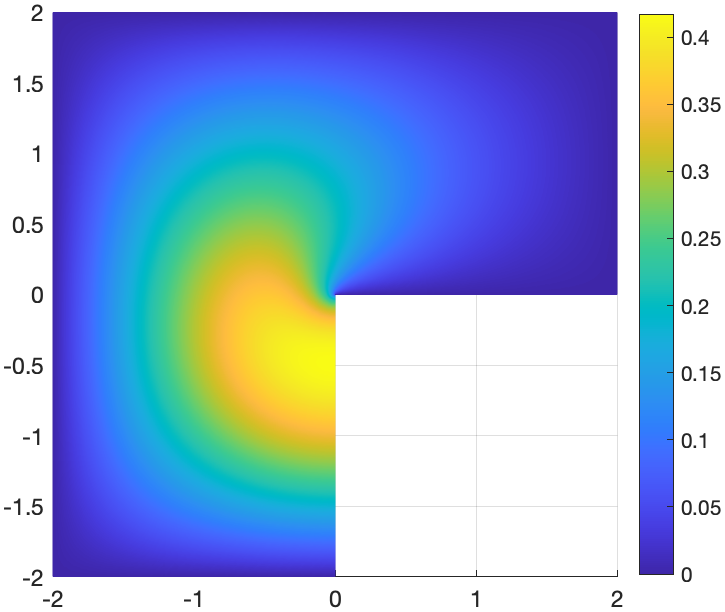}
		}
		\subfloat [$B_3, u_h^M$] {\label{fig.270.13.1.u.mod}
			\includegraphics[width=1.45 in]{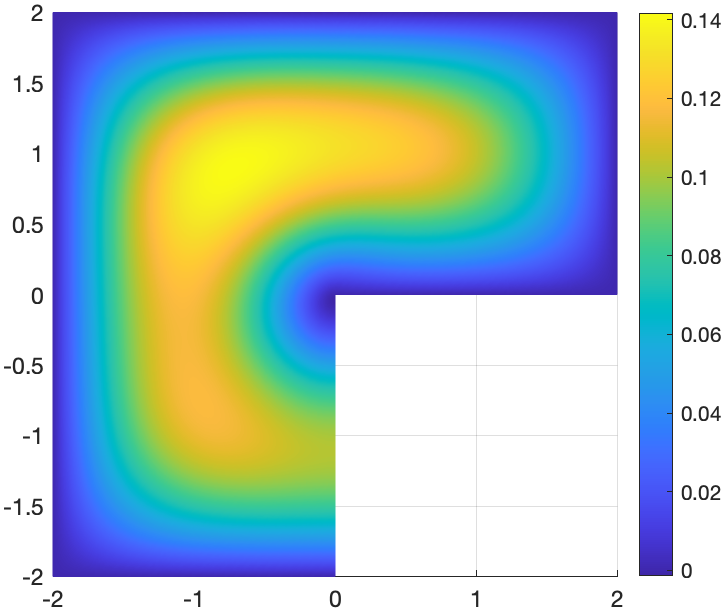}
		}
		\subfloat [$B_3, |u_R-u_h^M|$] {\label{fig.270.13.1.diff.mod}
			\includegraphics[width=1.45 in]{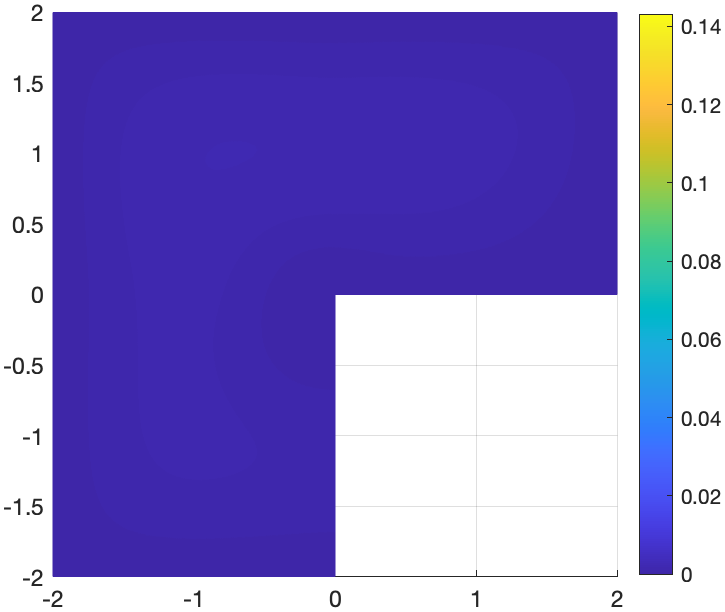}
		}\\ \vspace{0.1cm}
		\subfloat [$B_4, u_h^N$] {\label{fig.270.14.1.u.nai}
			\includegraphics[width=1.45 in]{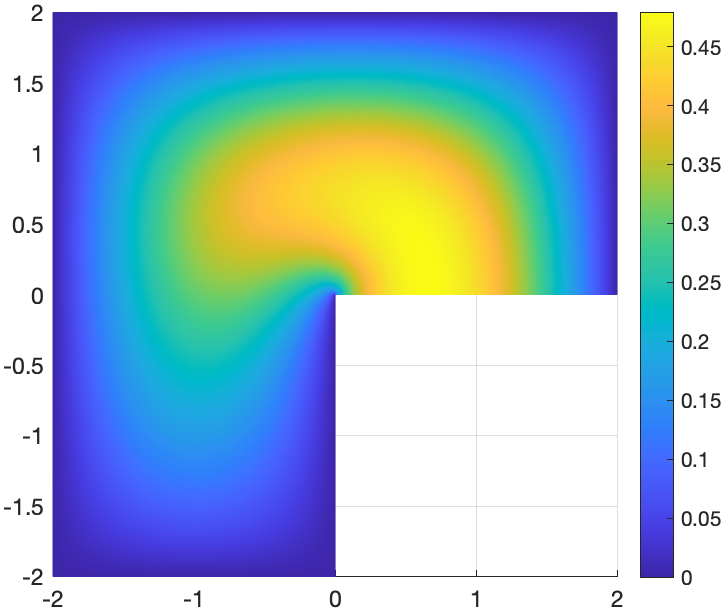}
		}
		\subfloat [$B_4, |u_R-u_h^N|$] {\label{fig.270.14.1.diff.nai}
			\includegraphics[width=1.45 in]{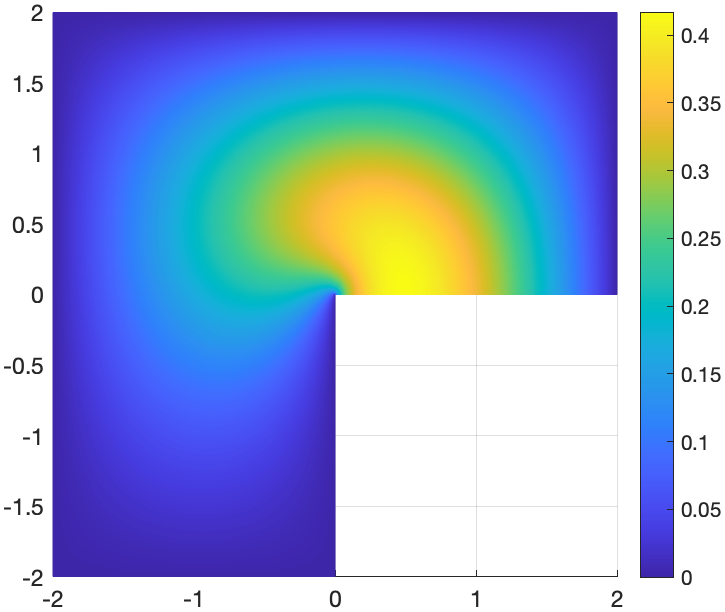}
		}
		\subfloat [$B_4, u_h^M$] {\label{fig.270.14.1.u.mod}
			\includegraphics[width=1.45 in]{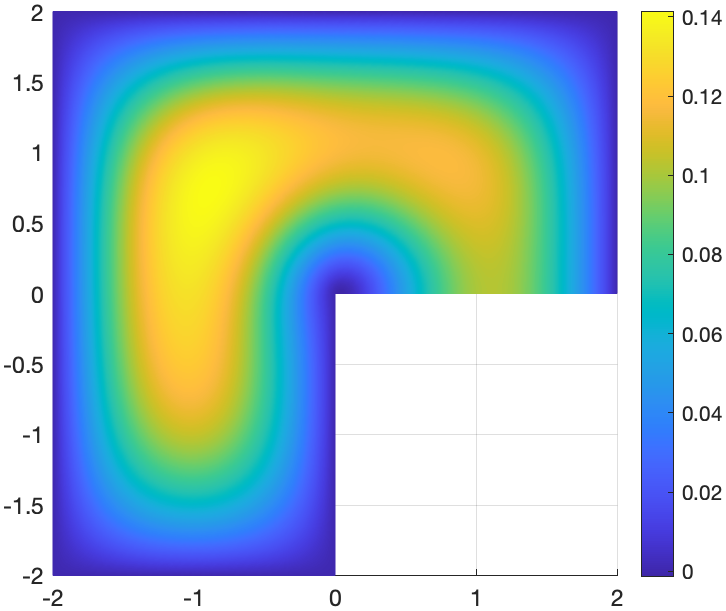}
		}
		\subfloat [$B_4, |u_R-u_h^M|$] {\label{fig.270.14.1.diff.mod}
			\includegraphics[width=1.45 in]{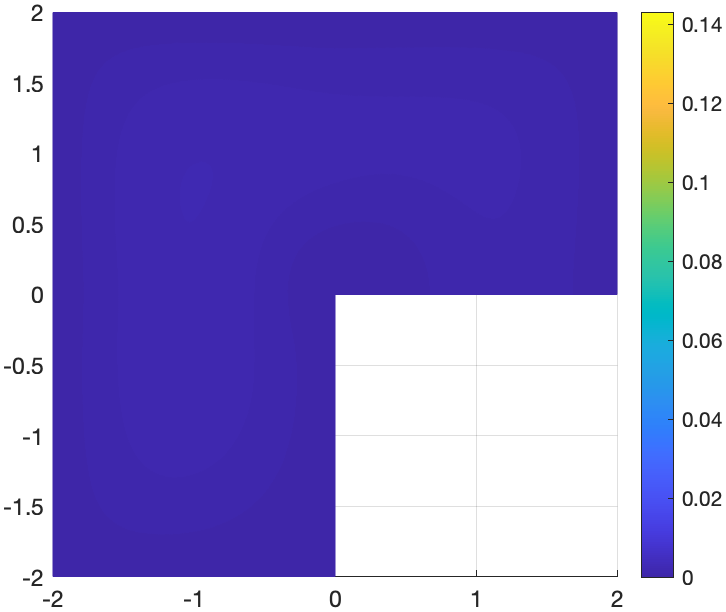}
		}
		\caption{\Cref{exa.6.1.270}: Domain III; Boundary type: $B_3, B_4$; $u_h^N, u_h^M$ and their differences with $u_R$.}
		\label{fig.exa.270.2}
	}
	
	
	
	\begin{table}[h]
		\centering
		\caption{\Cref{exa.6.1.270}: Domain III, $L^\infty$ errors.}\label{ex3err}
		\begin{tabular}{c|c|cccc} 
			\hline
			& Boundary type & 3 & 4 & 5 & 6 \\ 
			\hline
			\multirow{4}{*}{$\|u_R-u_h^N\|_\infty$} 
			& $B_1$ & 1.2837e-01 & 1.3782e-01 & 1.4183e-01 & 1.4305e-01 \\
			& $B_2$ & 1.4597e-02 & 4.5660e-03 & 2.1383e-03 & 1.8196e-03 \\
			& $B_3$ & 3.0181e-01 & 3.5064e-01 & 3.8071e-01 & 3.9915e-01 \\
			& $B_4$ & 3.0180e-01 & 3.5063e-01 & 3.8071e-01 & 3.9914e-01 \\ 
			\hline
			\multirow{4}{*}{$\|u_R-u_h^M\|_\infty$} 
			& $B_1$ & 5.8104e-03 & 3.4371e-03 & 1.6673e-03 & 9.4410e-04 \\
			& $B_2$ & 1.4597e-02 & 4.5660e-03 & 2.1383e-03 & 1.8196e-03 \\
			& $B_3$ & 1.0371e-02 & 4.7030e-03 & 1.9051e-03 & 1.0291e-03 \\
			& $B_4$ & 1.0368e-02 & 4.7012e-03 & 1.9053e-03 & 1.0273e-03 \\
			\hline
		\end{tabular}
	\end{table}
	
	The convergence rates of $w_h$ and $u_h^M$ are shown in \Cref{ex3rate}. From these results, we observe that $u_h^M$ exhibits a convergence rate of order $h^1$, whereas the convergence rate of $w_h$ approaches $h^\alpha$, with $\alpha = 2/3$ for boundary types $B_1$ and $B_2$, and $\alpha = 1/3$ for boundary types $B_3$ and $B_4$. These observations are consistent with the results in \Cref{PoissonError} and \Cref{thmerr}.

	
	\begin{table}[H]
		\centering
		\caption{\Cref{exa.6.1.270}: Domain III, convergence rates of $w_h,u_h^M$.}\label{ex3rate}
		\begin{tabular}{c|c|llllll}
			\hline
			& Boundary type & j=3 & j=4 & j=5 & j=6 & j=7 & j=8 \\ \hline
			\multirow{4}{*}{$\mathcal R$ for $u_h^M$ }  & $B_1$  & 0.84  & 0.92  & 0.97  & 0.99  & 0.99  & 1.00  \\ 
			& $B_2$  & 0.86  & 0.96  & 0.99  & 1.00  & 1.00  & 1.00  \\
			& $B_3$  & 0.83  & 0.92  & 0.97  & 0.99  & 1.00  & 1.00  \\ 
			& $B_4$  & 0.83  & 0.92  & 0.97  & 0.99  & 1.00  & 1.00  \\ \hline
			\multirow{4}{*}{$\mathcal R$ for $w_h$ } & $B_1$  & 0.83  & 0.88  & 0.87  & 0.84  & 0.80  & 0.77  \\ 
			& $B_2$  & 0.86  & 0.96  & 0.99  & 1.00  & 1.00  & 1.00  \\ 
			& $B_3$  & 0.69  & 0.62  & 0.51  & 0.43  & 0.38  & 0.36  \\ 
			& $B_4$  & 0.69  & 0.62  & 0.51  & 0.43  & 0.38  & 0.36  \\ 
			\hline
		\end{tabular}
	\end{table}
	
	
	From \Cref{fig.exa.270.1}, we observe that $|u_R-u_h^N|\rightarrow 0$ as mesh is refined for the boundary type $B_2$, implying that Poisson's problem with $f\equiv C$ (with $C$ being some constant) admits a weak solution $w\in \mathcal{S}$. Therefore, we consider a new source term $f \in L^2(\Omega)$ defined as follows:
	\ali{\label{f-3}
		f=\ca{
			1,\quad &x\geq 0,\, y\geq 0,\\
			0,\quad &x<0,\, y\geq 0,\\
			-1,\quad &y<0.
		}
	}
	We repeat the numerical test, and report the solutions in \Cref{fig.exa.270.3}, from which we observe that the naive mixed finite element solution $u_h^N$ converges to an incorrect solution, while the solution obtained from Algorithm~\ref{algorithm-modified} converges to the true solution. 
	
	
	\fig{[h]
		\centering
		\subfloat [$B_2, u_h^N$] {\label{fig.270.12.2.u.nai}
			\includegraphics[width=1.45 in]{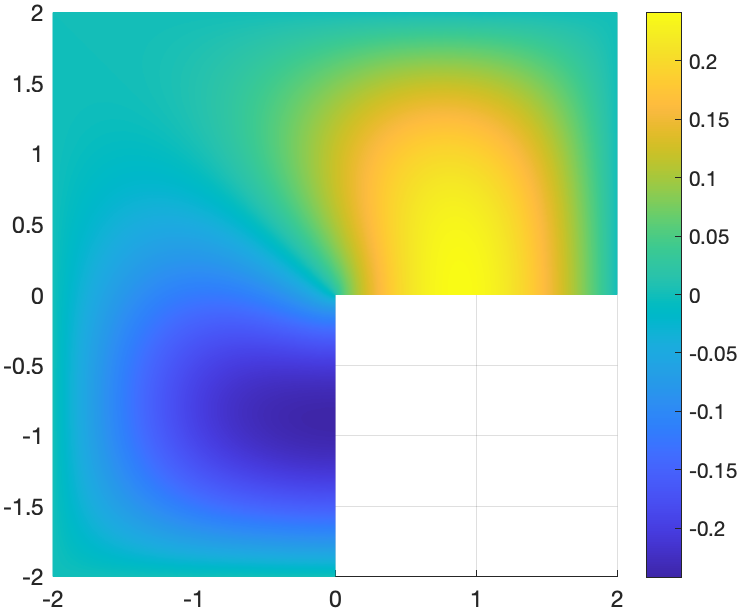}
		}
		\subfloat [$B_2, |u_R-u_h^N|$] {\label{fig.270.12.2.diff.nai}
			\includegraphics[width=1.45 in]{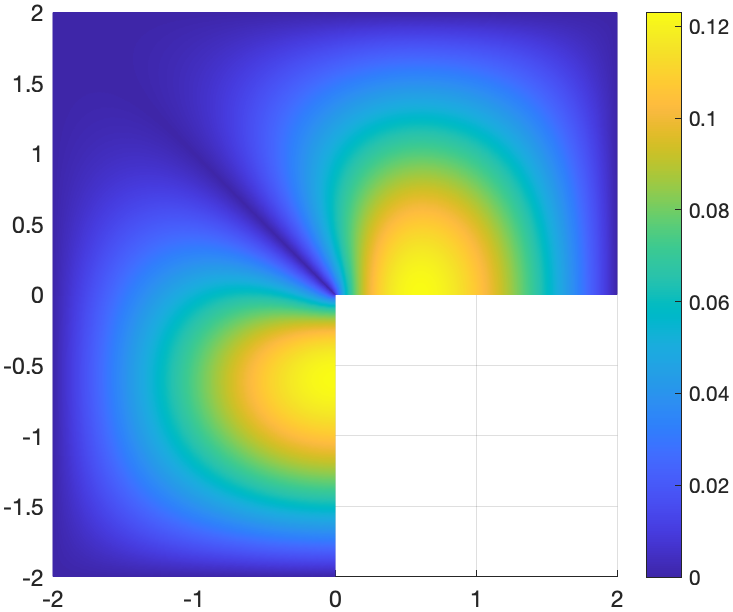}
		}
		\subfloat [$B_2, u_h^M$] {\label{fig.270.12.2.u.mod}
			\includegraphics[width=1.45 in]{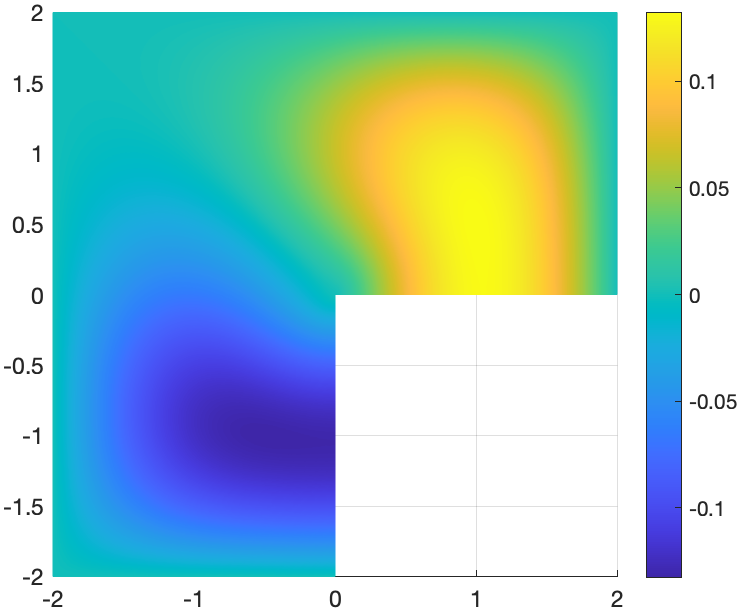}
		}
		\subfloat [$B_2, |u_R-u_h^M|$] {\label{fig.270.12.2.diff.mod}
			\includegraphics[width=1.45 in]{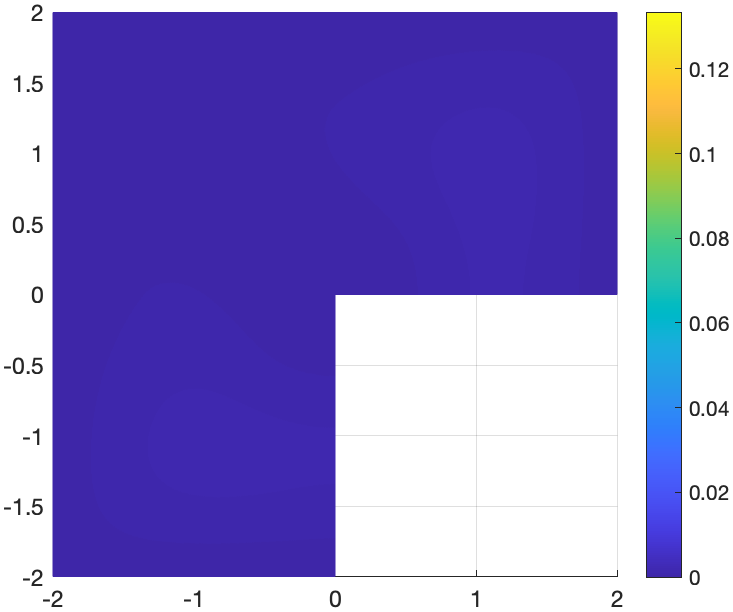}
		}
		\caption{\Cref{exa.6.1.270}: Domain III; Boundary type: $B_2$; $f$ satisfies \eqref{f-3}; $u_h^N, u_h^M$ and differences.}
		\label{fig.exa.270.3}
	}
}
%
%
%
%
%
%
%
%
\exa{\label{exa.6.1.315}
	Consider domain IV, $\Omega=[-2,2]^2\backslash\{(x,y)\mid -x<y<0\}$, with the initial mesh and the mesh after $1$ refinement in \Cref{ex4mesh}. The boundary conditions are specified as $B_3$ and $B_4$. 
	\fig{[hbt!]
		\centering
		\subfloat [Initial mesh] {
			\includegraphics[width=1.1 in]{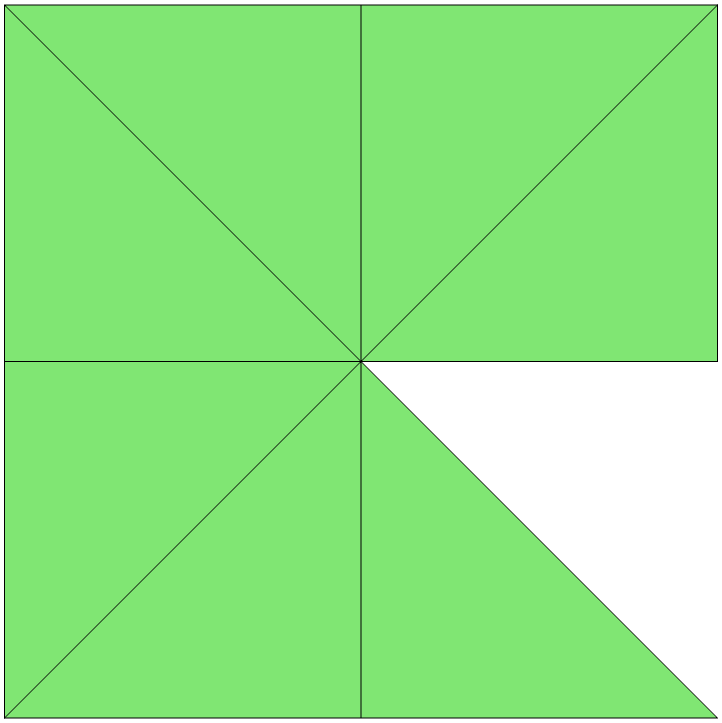}
		}\hspace{4em}
		\subfloat [$1$ refinement] {
			\includegraphics[width=1.1 in]{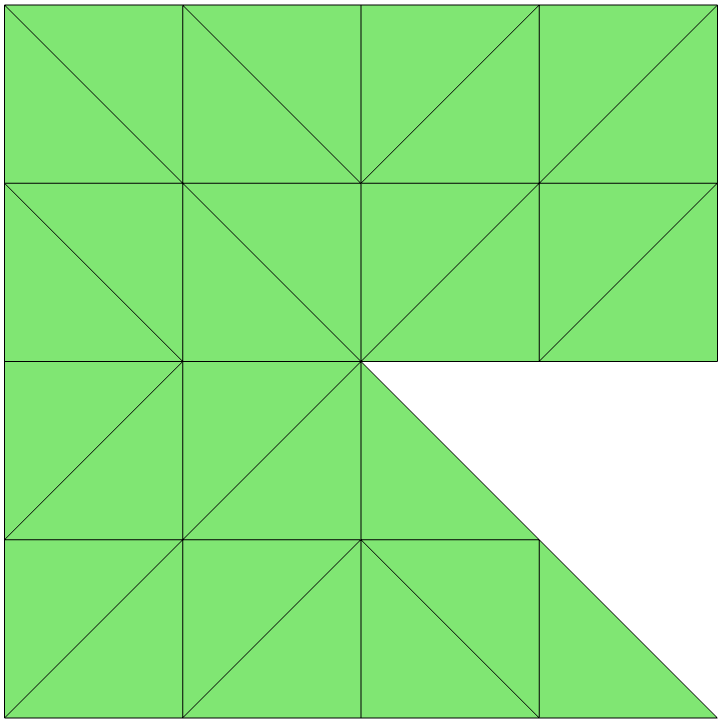}
		}
		\caption{\Cref{exa.6.1.315}: Initial mesh and mesh after $1$ refinement.}\label{ex4mesh}
	}
	
	The largest interior angle is $\frac74\pi$, and $d_\perp = 2$. We consider $f$ given in \eqref{f-3}. 
	We compute the naive mixed finite element solution $u_n^N$; the modified mixed finite element solution $u_h^{M*}$ from Algorithm \ref{algorithm-modified} with $d_\perp=1$; and the modified mixed finite element solution $u_h^{M}$ from Algorithm \ref{algorithm-modified} with $d_\perp=2$. The reference solution is obtained using the $C^0$-IPDG method with penalty parameter $\sigma=68$. 
	
	The numerical solutions $u_h^N$, $u_h^{M*}$, and $u_h^M$, computed on a mesh after eight uniform refinements, along with their differences from the reference solution, are presented in \Cref{fig.exa.315}.     
	The $L^\infty$ norm of differences after 3 to 6 times refinements of the mesh are presented in \Cref{ex4error}.
	From these results, we observe that the finite element solutions $u_h^N$ and $u_h^{M*}$ converge to incorrect solutions, whereas the solution obtained from Algorithm~\ref{algorithm-modified} with $d_\perp=2$ converges to the true solution, thereby confirming the necessity of properly accounting for the dimension of the orthogonal complement in the modified formulation.
	
	\fig{[h]
		\centering
		\subfloat [$B_3, u_h^N$] {\label{fig.315.13.2.u.nai}
			\includegraphics[width=1.45 in]{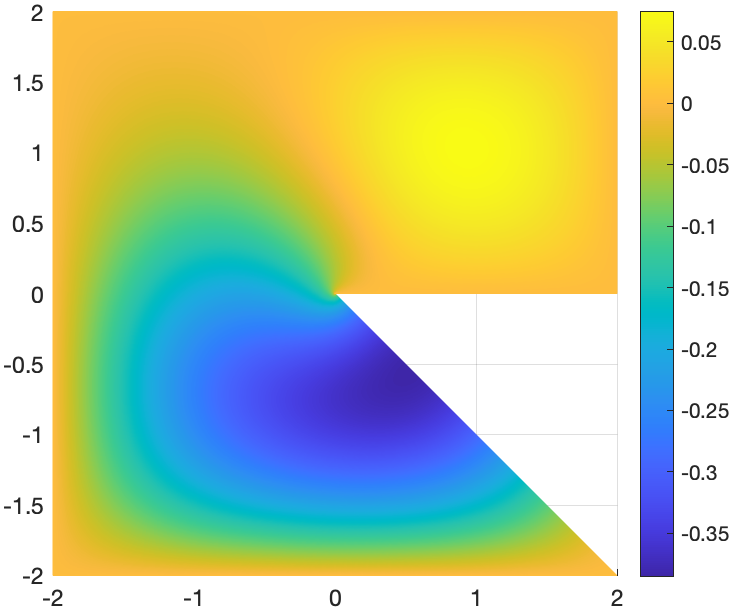}
		}
		\subfloat [$B_3, |u_R-u_h^N|$] {\label{fig.315.13.2.diff.nai}
			\includegraphics[width=1.45 in]{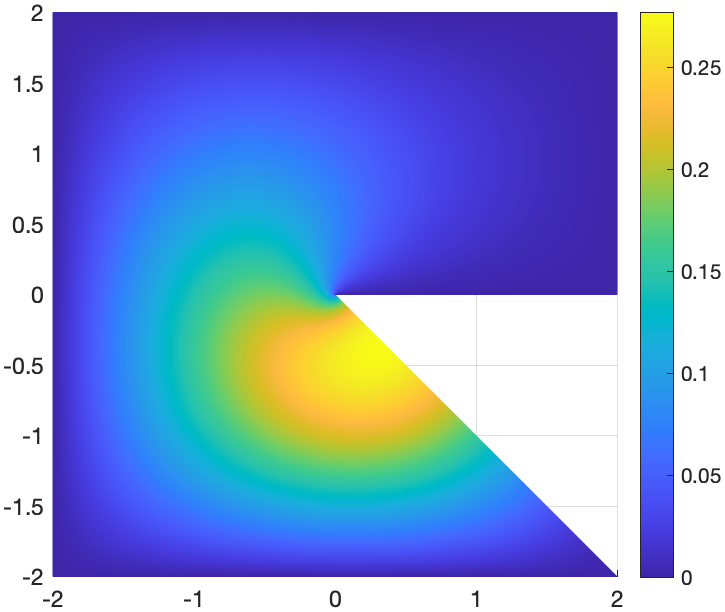}
		}
		\subfloat [$B_3, u_h^{M*}$] {\label{fig.315.13.2.u.modmid}
			\includegraphics[width=1.45 in]{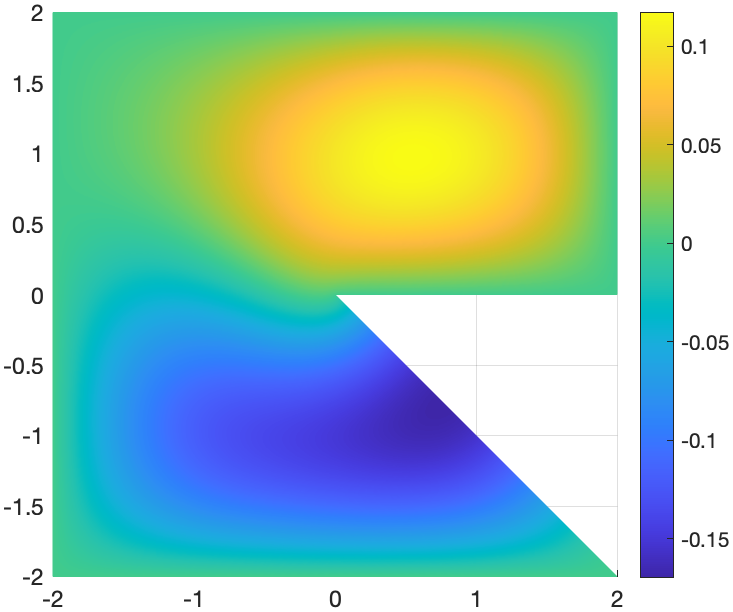}
		}
		\subfloat [$B_3, |u_R-u_h^{M*}|$] {\label{fig.315.13.2.diff.modmid}
			\includegraphics[width=1.45 in]{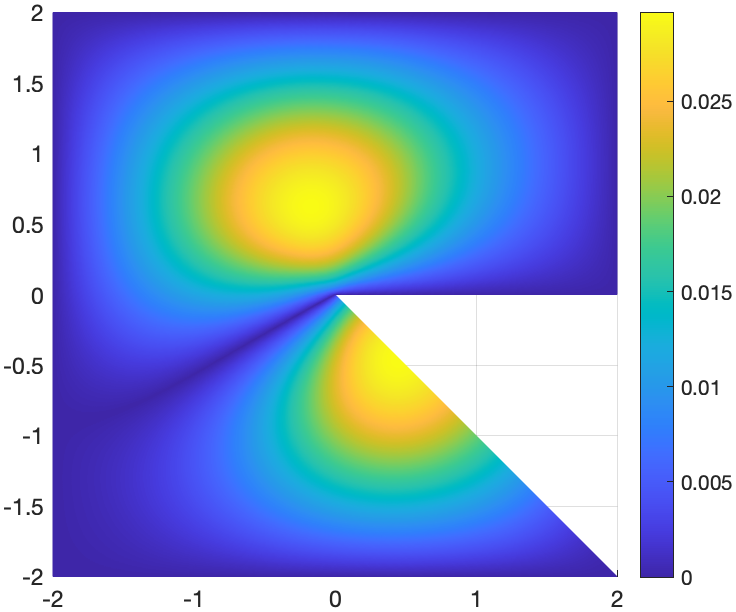}
		}\\ \vspace{1mm}
		\subfloat [$B_3, u_h^M$] {\label{fig.315.13.2.u.mod}
			\includegraphics[width=1.45 in]{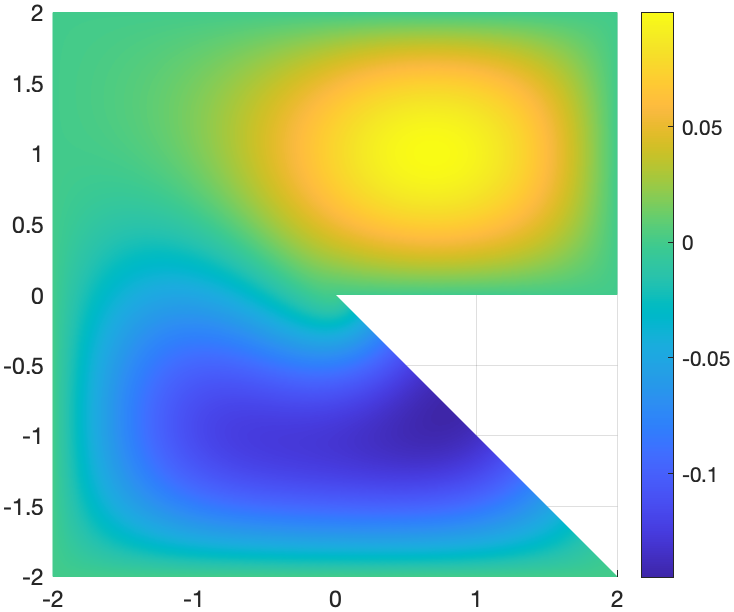}
		}
		\subfloat [$B_3, |u_R-u_h^M|$] {\label{fig.315.13.2.diff.mod}
			\includegraphics[width=1.45 in]{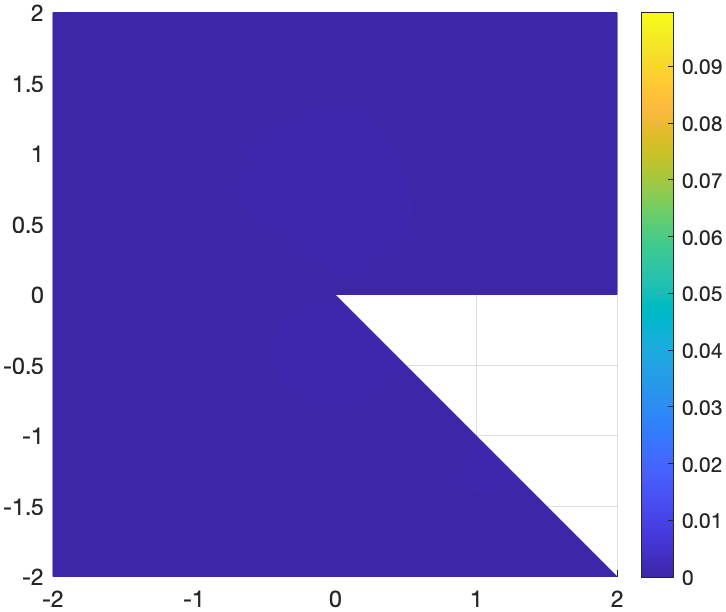}
		}
		\subfloat [$B_4, u_h^N$] {\label{fig.315.14.2.u.nai}
			\includegraphics[width=1.45 in]{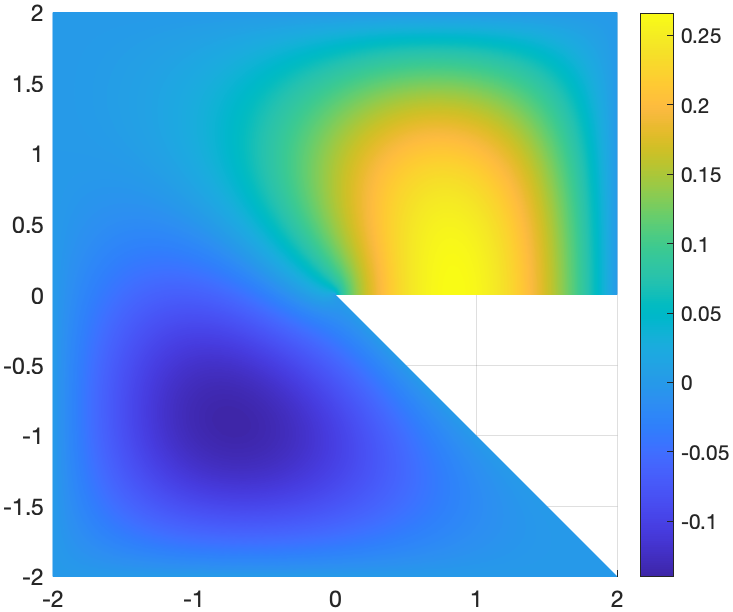}
		}
		\subfloat [$B_4, |u_R-u_h^N|$] {\label{fig.315.14.2.diff.nai}
			\includegraphics[width=1.45 in]{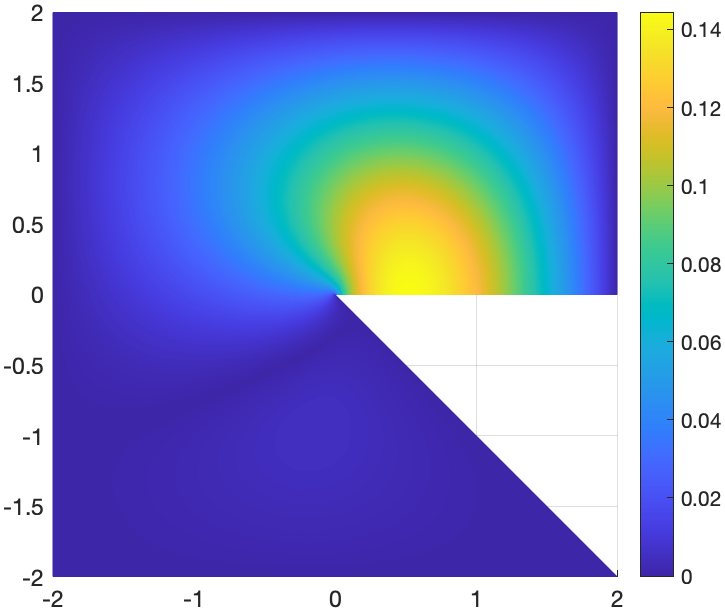}
		}\\ \vspace{1mm}
		\subfloat [$B_4, u_h^{M*}$] {\label{fig.315.14.2.u.modmid}
			\includegraphics[width=1.45 in]{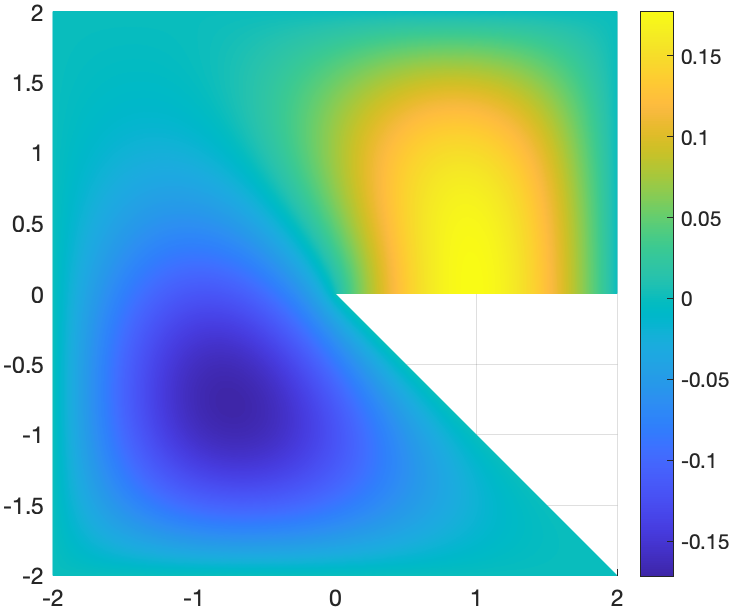}
		}
		\subfloat [$B_4, |u_R-u_h^{M*}|$] {\label{fig.315.14.2.diff.modmid}
			\includegraphics[width=1.45 in]{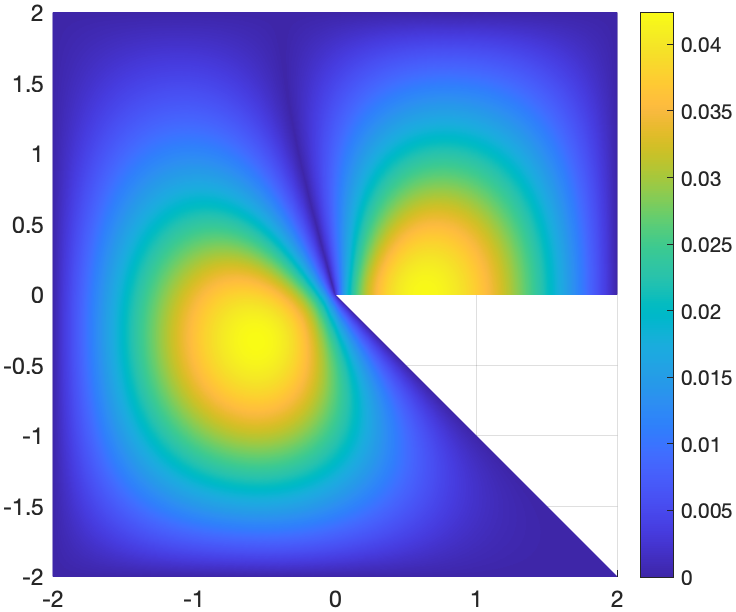}
		}
		\subfloat [$B_4, u_h^M$] {\label{fig.315.14.2.u.mod}
			\includegraphics[width=1.45 in]{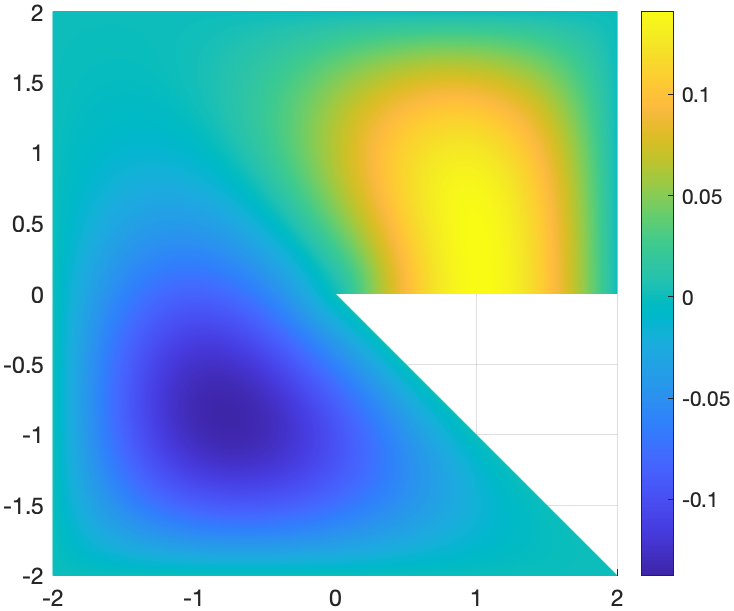}
		}
		\subfloat [$B_4, |u_R-u_h^M|$] {\label{fig.315.14.2.diff.mod}
			\includegraphics[width=1.45 in]{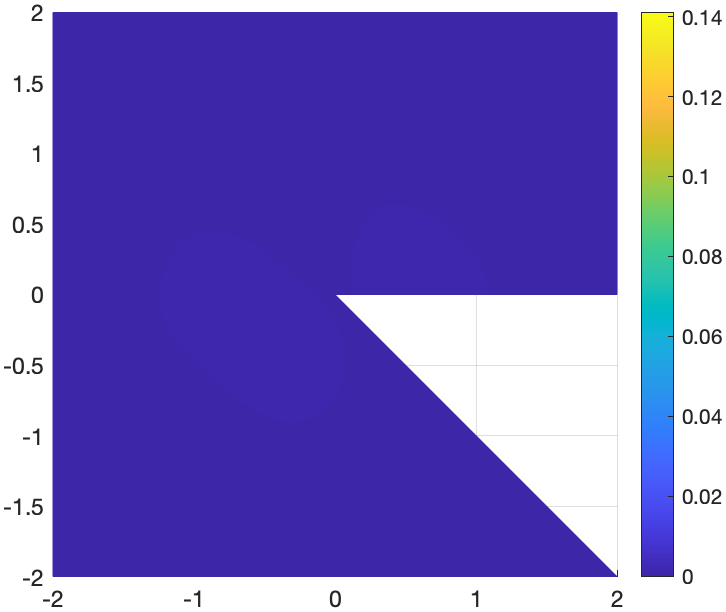}
		}
		\caption{\Cref{exa.6.1.315}: Domain IV; Boundary type: $B_3, B_4$; $u_h^N, u_h^{M*}, u_h^M$ and differences.}
		\label{fig.exa.315}
	}
	
	
	
	\begin{table}[h]
		\centering
		\caption{\Cref{exa.6.1.315}: Domain IV, $L^\infty$ errors. }\label{ex4error}
		\begin{tabular}{c|c|cccc} 
			\hline
			& Boundary type & 3 & 4 & 5 & 6 \\ 
			\hline
			\multirow{2}{*}{$\|u_R-u_h^N\|_\infty$} 
			& $B_3$ & 1.9797e-01 & 2.3068e-01 & 2.5019e-01 & 2.6279e-01 \\
			& $B_4$ & 1.0727e-01 & 1.2388e-01 & 1.3283e-01 & 1.3816e-01 \\ 
			\hline
			\multirow{2}{*}{$\|u_R-u_h^{M*}\|_\infty$} 
			& $B_3$ & 2.8418e-02 & 2.9444e-02 & 2.9573e-02 & 2.9527e-02 \\
			& $B_4$ & 3.8119e-02 & 4.1158e-02 & 4.1791e-02 & 4.2073e-02 \\
			\hline
			\multirow{2}{*}{$\|u_R-u_h^M\|_\infty$} 
			& $B_3$ & 6.4356e-03 & 3.1130e-03 & 1.2959e-03 & 7.3513e-04 \\
			& $B_4$ & 6.9425e-03 & 2.2060e-03 & 9.6986e-04 & 7.5067e-04 \\
			\hline
		\end{tabular}
	\end{table}
	
	The convergence rates of $w_h$ and $u_h^M$ are shown in \Cref{ex4rate}. From these results, we observe that $u_h^M$ exhibits a convergence rate of order $h^1$, whereas the convergence rate of $w_h$ approaches $h^\alpha$ with $\alpha=2/7$ under boundary types $B_3$ and $B_4$.
	
	\begin{table}[H]
		\centering
		\caption{\Cref{exa.6.1.315}: Domain IV, convergence rates of $w_h,u_h^M$.}\label{ex4rate}
		\begin{tabular}{c|c|llllll}
			\hline
			& Boundary type & j=3 & j=4 & j=5 & j=6 & j=7 & j=8 \\ \hline
			\multirow{2}{*}{$\mathcal R$ for $u_h^M$ } 
			& $B_3$  & 0.83  & 0.92  & 0.97  & 0.99  & 1.00  & 1.00  \\ 
			& $B_4$  & 0.83  & 0.92  & 0.97  & 0.99  & 1.00  & 1.00  \\ \hline
			\multirow{2}{*}{$\mathcal R$ for $w_h$ } 
			& $B_3$  & 0.67  & 0.58  & 0.47  & 0.38  & 0.33  & 0.31  \\ 
			& $B_4$  & 0.67  & 0.57  & 0.46  & 0.38  & 0.33  & 0.31  \\ 
			\hline
		\end{tabular}
	\end{table}
	
}


\subsection{Neumann boundary conditions}
We consider the Neumann boundary conditions discussed in \Cref{sec-neumann}.

\exa{\label{exa.6.2.270}
	
	Consider domain III, $\Omega=[-2,2]^2\backslash\{(x,y)|x>0,y<0\}$, with boundary type $B_5$. 
	We take $f$ as \eqref{f-3}, which satisfies the compatibility condition $\int_\Omega f \,\mathrm dx = 0$. 
	For the reference solution, the mesh-dependent problem requires the functions in $V_h$ to vanish at the origin. The interior penalty parameter is set to $\sigma=39$.
	The triangulation and the mesh after $1$ refinement are the same as in \Cref{exa.6.1.270}. 
	For this polygonal domain, there exists an interior angle of $\frac32\pi$, and $d_\perp = 1$. 
	The numerical solutions $u_h^N$ and $u_h^M$, computed on a mesh after eight uniform refinements, along with their differences from the reference solution, are presented in \Cref{fig.exa.270.neumann}. 
	The $L^\infty$ norm of the differences $|u_R-u_h^N|$, $|u_R-u_h^M|$ after $3$ to $6$ refinements are presented in \Cref{ex5errror}.
	From these results, we observe that the naive mixed finite element solution $u_h^N$ converges to an incorrect solution, whereas the solution obtained from Algorithm~\ref{algorithm-modified} with $V_h$ replaced by $\bar V_h$ in \eqref{barVhspace} converges to the true solution. 
	
	\fig{[h]
		\centering
		\subfloat [$B_5, u_h^N$] {\label{fig.270.2.2.u.nai}
			\includegraphics[width=1.45 in]{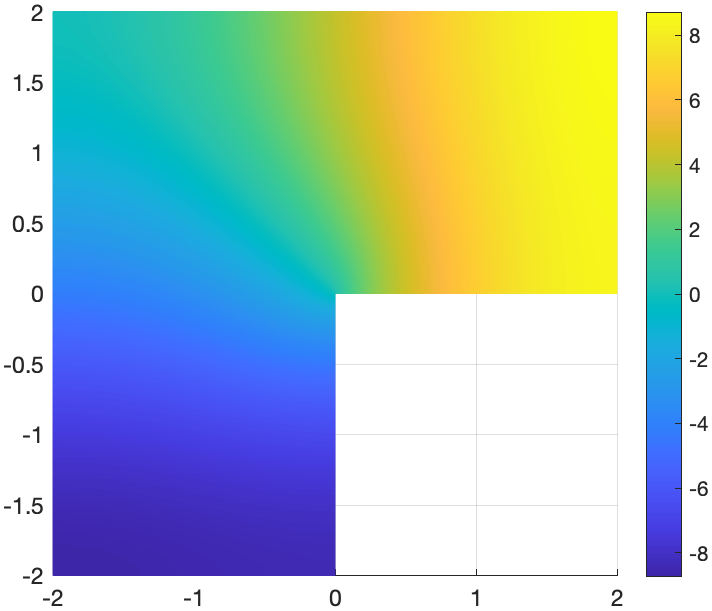}
		}
		\subfloat [$B_5, |u_R-u_h^N|$] {\label{fig.270.2.2.diff.nai}
			\includegraphics[width=1.45 in]{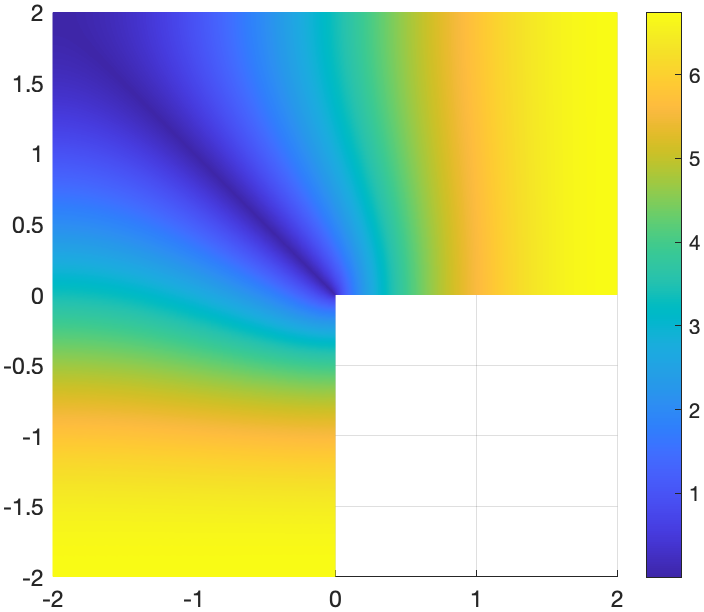}
		}
		\subfloat [$B_5, u_h^M$] {\label{fig.270.2.2.u.mod}
			\includegraphics[width=1.45 in]{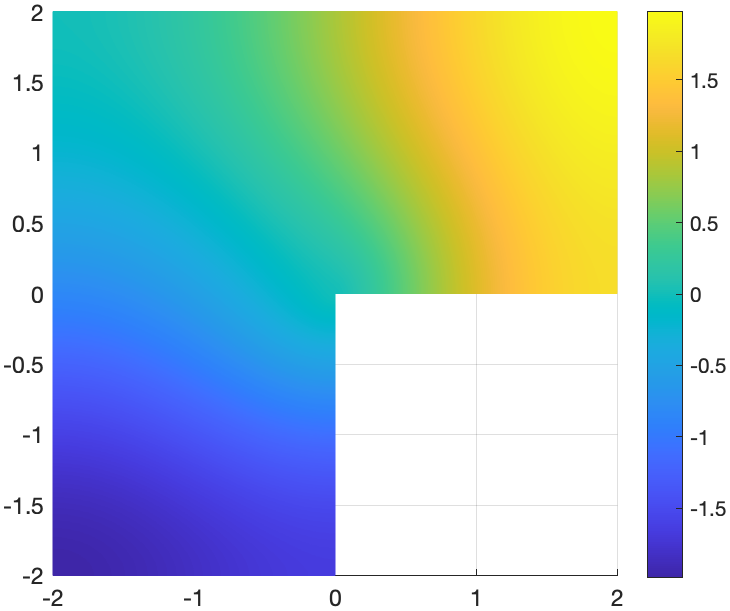}
		}
		\subfloat [$B_5, |u_R-u_h^M|$] {\label{fig.270.2.2.diff.mod}
			\includegraphics[width=1.45 in]{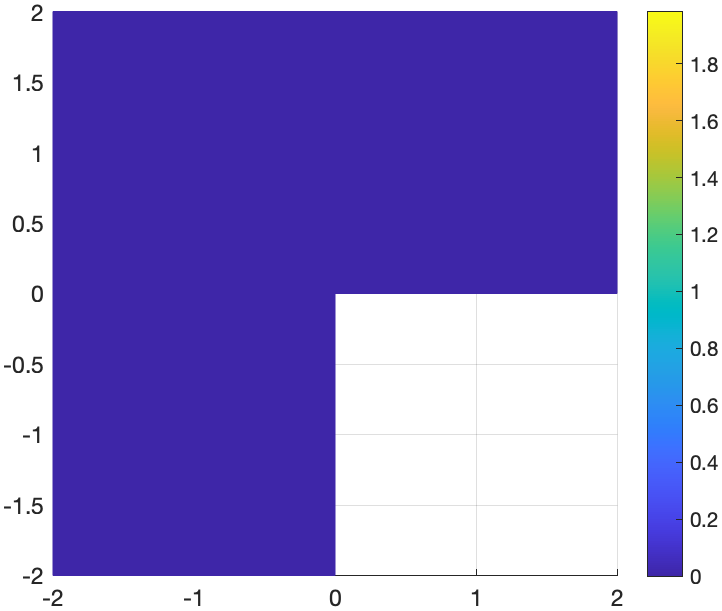}
		}
		\caption{\Cref{exa.6.2.270}: Domain III; Boundary type: $B_5$; $f$ satisfies \eqref{f-3}; $u_h^N, u_h^M$ and differences.}
		\label{fig.exa.270.neumann}
	}
	
	
	
	\begin{table}[h]
		\centering
		\caption{\Cref{exa.6.2.270}: Domain III, $L^\infty$ errors.}\label{ex5errror}
		\begin{tabular}{c|c|cccc} 
			\hline
			& Boundary type & 3 & 4 & 5 & 6 \\ 
			\hline
			\multirow{1}{*}{$\|u_R-u_h^N\|_\infty$} 
			& $B_5$ & 6.5195e+00 & 6.6456e+00 & 6.6964e+00 & 6.7209e+00 \\ 
			\hline
			\multirow{1}{*}{$\|u_R-u_h^M\|_\infty$} 
			& $B_5$ & 3.8229e-02 & 1.7219e-02 & 8.0944e-03 & 6.4859e-03 \\
			\hline
		\end{tabular}
	\end{table}
	
	The convergence rates of $w_h$ and $u_h^M$ are shown in \Cref{ex5rate}. From these results, we observe that $u_h^M$ exhibits a convergence rate of order $h^1$, whereas the convergence rate of $w_h$ approaches $h^\alpha$ with $\alpha = 2/3$ under boundary type $B_5$, which are consistent with the result in \Cref{neumannrate}.
	
	\begin{table}[H]
		\centering
		\caption{\Cref{exa.6.2.270}: Domain III, convergence rates of $w_h,u_h^M$.}\label{ex5rate}
		\begin{tabular}{c|c|llllll}
			\hline
			& Boundary type & j=3 & j=4 & j=5 & j=6 & j=7 & j=8 \\ \hline
			\multirow{1}{*}{$\mathcal R$ for $u_h^M$ }  & $B_5$  & 0.85  & 0.96  & 0.96  & 0.98  & 0.99  & 0.99  \\ 
			\hline
			\multirow{1}{*}{$\mathcal R$ for $w_h$ } & $B_5$ & 0.73  & 0.73  & 0.72  & 0.70  & 0.69  & 0.68  \\
			\hline
		\end{tabular}
	\end{table}
	
}

\section*{Acknowledgments}
H. Li was supported in part by the National Science Foundation Grant DMS-2208321 and a Japan Society for the Promotion of Science Research Grant.
N. Yi was supported by the National Key R \& D Program of China (2024YFA1012600) and NSFC (12431014).
P. Yin’s research was supported by the University of Texas at El Paso Startup Award.


\bibliographystyle{acm}
\bibliography{bib/ref}

\end{CJK}
\end{document}